\newtheorem{theorem}{Theorem}[section]
\newtheorem{lemma}[theorem]{Lemma}
\newtheorem{corollary}[theorem]{Corollary}
\newtheorem{proposition}[theorem]{Proposition}
\newcommand{\ZZ}{\mathbb{Z}} 
\newcommand{\NN}{\mathbb{N}}
\newcommand{\QQ}{\mathbb{Q}}
\newcommand{\RR}{\mathbb{R}}
\newcommand{\DD}{\mathcal{D}}
\newcommand{\GG}{\Gamma}
\newcommand{\RP}{\mathbb{R}\mathbb{P}}
\def\vol{\mathrm{Vol}}
\def\whop{\widehat{\Omega}}
\def\O{\mathrm{O}}
\def\mod{\mbox{mod}\ }
\def\HH{\mathbb{H}}
\def\MC{\mathcal{C}}
\def\MN{\mathcal{N}}
\def\PP{\mathcal{P}}
\def\GG{\mathcal{G}}
\def\TT{\mathcal{T}} 
\def\aa{\alpha} 
\def\QQ{\mathcal{Q}}
\def\RRR{\mathcal{R}}
\def\MRR{\mathcal{R}}
\def\MC{\mathcal{C}}
\def\MN{\mathcal{N}}
\def\ME{\mathcal{E}}
\def\PP{\mathcal{P}}
\def\GG{\mathcal{G}}
\def\TT{\mathcal{T}} 
\def\aa{\alpha} 
\def\bb{\beta} 
\def\cc{\gamma} 
\def\QQ{\mathcal{Q}}
\def\RRR{\mathcal{R}}
\def\S{\mathcal{S}}
\def\wt{\widetilde}
\def\wh{\widehat}
\def\split{\setminus \!\! \setminus}
\def\supp{\mbox{Supp}}
\def\MC{\mathcal{C}}
\def\MN{\mathcal{N}}
\newcommand{\MD}{\mathcal{D}}
\def\K{\mathcal{K}}
\begin{document}

\title[Volume estimates for hyperbolic polyhedra]{Two--sided
combinatorial volume bounds for non--obtuse hyperbolic polyhedra}
\author{Christopher K. Atkinson}

\address{Department of Mathematics\\Temple University}
\email{ckatkin@temple.edu} 
\urladdr{http://www.math.temple.edu/~ckatkin}

\begin{abstract}
We give a method for computing upper and lower bounds for the volume of a
non--obtuse hyperbolic polyhedron in terms of the combinatorics of the
$1$--skeleton.  We introduce an algorithm that detects the geometric
decomposition of good $3$--orbifolds with planar singular locus and
underlying manifold $S^3$.  The volume bounds follow from techniques related
to the proof of Thurston's Orbifold Theorem, Schl\"{a}fli's formula, and
previous results of the author giving volume bounds for right--angled
hyperbolic polyhedra.
\end{abstract}

\maketitle

\section{Introduction}

Andreev's theorem gives a complete characterization of non--obtuse hyperbolic
polyhedra of finite volume  in terms of the combinatorics of their
$1$--skeleta labeled by dihedral angles \cite{andreev1,andreev2}.  Andreev's
theorem also states that there is at most one hyperbolic polyhedron having
prescribed $1$--skeleton with a given labeling by dihedral angles, up to
isometry.  Hence the volume of a non--obtuse hyperbolic polyhedron is
determined completely by its $1$--skeleton labeled by dihedral angles.
Computing the exact volume of such a polyhedron in terms of its combinatorics
is a difficult problem.  Milnor \cite{thurstonnotes}, Vinberg \cite{vinberg},
Cho and Kim \cite{chokim}, Murakami and Yano \cite{murakamiyano}, Derevnin
and Mednykh \cite{derevninmednykh}, and Ushijima \cite{ushijima} have given
formulas that compute the volume of various families of hyperbolic tetrahedra
in terms of their dihedral angles.  Kellerhals \cite{kellerhals} gave
formulas that compute the volume of certain cubes and truncated tetrahedra.
More in the spirit of this paper, Sleator, Tarjan, and W.  Thurston showed
that a certain infinite family of obtuse ideal hyperbolic polyhedra obtained by
subdividing the faces of an icosahedron into triangles has volume equal to
$2N\cdot~V_3~-~O(\log(N))$, where $N$ is the number of vertices and
$V_3\approx 1.01494$ is the volume of the regular ideal hyperbolic
tetrahedron \cite{stt}. 

The main result of this paper
is a technique that gives a two--sided combinatorial volume bound for all
hyperbolic polyhedra with non--obtuse dihedral angles in terms of their
$1$--skeleta.  A weak form of our main result is the following
theorem:

\begin{theorem}  \label{superweak}
Let $\PP$ be a non--obtuse hyperbolic polyhedron containing
no prismatic $4$--circuits, $N_4$ degree $4$ vertices, and $N_3$ degree $3$
vertices. Then
	$$\frac{4N_4 + N_3 -8}{32} \cdot V_8  < 
	\vol(\PP) < \frac{2N_4 + 3 N_3 -2}{4} \cdot V_8 + \frac{15N_3 + 20 N_4}{16}
	\cdot V_3 .$$
\end{theorem}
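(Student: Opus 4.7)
My plan is to prove the two inequalities separately, comparing $\PP$ with appropriate reference polyhedra in both cases.

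\emph{Lower bound.} A standard spherical--polygon argument at each vertex shows that every non--obtuse hyperbolic polyhedron has vertex degrees in $\{3,4\}$, with all degree--4 vertices necessarily ideal (since the angle sum at such a vertex saturates at $2\pi$). Since $\PP$ has no prismatic 4--circuits, Andreev's theorem yields a right--angled hyperbolic polyhedron $\PP_R$ sharing the 1--skeleton of $\PP$, with ideal degree--4 vertices and finite trivalent vertices. One interpolates from $\PP$ to $\PP_R$ by a smooth 1--parameter family $\{\PP_t\}$ of non--obtuse hyperbolic polyhedra along which each dihedral angle monotonically increases to $\pi/2$. By Schl\"afli's formula,
\[
\frac{d\vol(\PP_t)}{dt}=-\frac{1}{2}\sum_e \ell_e(t)\,\frac{d\theta_e}{dt} \le 0,
\]
so $\vol(\PP)\ge \vol(\PP_R)$. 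The author's previously established lower bound for right--angled polyhedra with no prismatic 4--circuits now yields $\vol(\PP_R) \ge \frac{4N_4 + N_3 - 8}{32}V_8$, completing this direction.

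\emph{Upper bound.} For the upper bound I would apply the orbifold decomposition algorithm advertised in the abstract to the reflection orbifold $\OO_{\PP}$ associated to $\PP$, whose underlying manifold is $S^3$ and whose singular locus is the 1--skeleton of $\PP$ labeled by the dihedral angles. The algorithm should decompose $\OO_{\PP}$ into a bounded collection of geometric pieces, each of which admits a further subdivision into ideal tetrahedra and ideal octahedra whose multiplicities scale linearly in $N_3$ and $N_4$. Milnor's maximality theorems bound each ideal tetrahedron's volume by $V_3$ and each ideal octahedron's volume by $V_8$; summing over all pieces produces the stated upper bound, with the coefficients $(2N_4+3N_3-2)/4$ on $V_8$ and $(15N_3+20N_4)/16$ on $V_3$ emerging from the explicit counting of pieces at trivalent and tetravalent vertices.

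\emph{Main obstacle.} The harder direction is clearly the upper bound, which rests on setting up and analyzing the orbifold decomposition algorithm. In particular, justifying that the algorithm terminates with the precise multiplicities of ideal tetrahedra and octahedra per vertex---so that the counts reproduce the stated rational coefficients---requires a detailed case analysis of the combinatorial moves along essential spherical 2--suborbifolds (turnovers and pillows) through the singular locus. The lower bound, by contrast, follows essentially at once from Andreev's theorem, Schl\"afli's formula, and the author's prior volume bound for right--angled polyhedra.
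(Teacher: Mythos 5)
Your proposal has real problems in both directions, and your overall assessment of which side is harder is backwards.

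\textbf{Lower bound.} Your claim that Andreev's theorem produces a right--angled hyperbolic polyhedron $\PP_R$ with the same $1$--skeleton as $\PP$ is false whenever $\PP$ contains a prismatic $3$--circuit, and the hypothesis of Theorem~\ref{superweak} only excludes prismatic $4$--circuits. Andreev's condition~(4) requires the dihedral angle sum along any prismatic $3$--circuit to be strictly less than $\pi$; setting all three angles to $\pi/2$ gives $3\pi/2 > \pi$, so no right--angled realization of that $1$--skeleton exists, and there is no path of non--obtuse polyhedra with fixed combinatorics along which all angles increase to $\pi/2$. This is precisely why the paper does not argue this way: it first cuts along the turnovers dual to the prismatic $3$--circuits (Lemma~\ref{turn}, Lemma~\ref{turndis}, Corollary~\ref{turnredlemma}), producing turnover--reduced pieces, and the deformation to right angles is then a deformation \emph{with face degenerations} (Corollary~\ref{pi2deform}) in which triangular faces parallel to prismatic $3$--circuits degenerate to vertices, changing the combinatorics before invoking Theorem~\ref{pi2general}. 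Without this step your deformation simply does not exist.

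\textbf{Upper bound.} Here you propose applying the orbifold decomposition algorithm and subdividing pieces into ideal tetrahedra and octahedra ``whose multiplicities scale linearly in $N_3$ and $N_4$,'' but you supply no construction, no counting argument, and no reason the coefficients $(2N_4+3N_3-2)/4$ and $(15N_3+20N_4)/16$ would emerge; this is not a proof sketch but a statement of hope. Moreover it is not the route the paper takes. The orbifold decomposition machinery is used for the \emph{lower} bound (to reach pieces realizable at right angles), not the upper bound. The paper's upper bound (Theorem~\ref{genup} and Corollary~\ref{weakupper}) is much more direct: form the full truncation $\widehat P$ by replacing degree--$3$ vertices with triangles (Lemma~\ref{hypwhp} shows $(\widehat P, \pi/2)$ is realizable), observe that $\widehat\PP$ is the geometric limit of an angle--nonincreasing deformation of $\PP$, conclude $\vol(\PP) < \vol(\widehat\PP)$ by Schl\"afli, and then apply the right--angled upper bound Theorem~\ref{pi2generalupper} from \cite{volpoly}; the coefficients fall out of the elementary bounds $2E_{33}\le 3n_3$, $2E_{34}\le 3n_3+4n_4$. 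So your ``main obstacle'' paragraph has the two directions reversed: the upper bound is the elementary one, and the lower bound is where the decomposition into turnovers (and, in the general case with prismatic $4$--circuits, Bonahon--Siebenmann quadrilaterals) actually does the work.
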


The constant $V_8$ is the volume of the right--angled ideal hyperbolic
octahedron and is approximately $3.66386.$  The constant $V_3$ is the volume
of the regular ideal hyperbolic tetrahedron and is approximately $1.01494.$
A prismatic $k$--circuit is a simple closed curve in the dual graph of the
$1$-skeleton of $\PP$ composed of $k$ distinct edges such that no two of the
edges are contained in a common face. The upper bound holds for all
non--obtuse hyperbolic polyhedra.  It should be noted that it follows from
Andreev's theorem that all finite volume hyperbolic polyhedra with
non--obtuse dihedral angles have only degree $3$ and degree $4$ vertices.   The main result in this paper is a
technique that gives a lower volume bound in the general case where
prismatic $4$ circuits are allowed.

The following corollary follows from Theorem~\ref{superweak} by making the
compromises necessary to write the bounds in terms of the total number of
vertices.

\begin{corollary}
Let $\PP$ be a non--obtuse hyperbolic polyhedron containing
no prismatic $4$--circuits and $N$ vertices. Then
$$\frac{N-8}{32} \cdot V_8 < \vol(\PP) < 4.0166 N - 1.8319.$$
\end{corollary}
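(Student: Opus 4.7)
The plan is to derive both inequalities directly from Theorem~\ref{superweak} by replacing the joint vertex-count expressions $4N_4 + N_3$, $2N_4 + 3N_3$, and $15N_3 + 20N_4$ with estimates involving only $N = N_3 + N_4$, sacrificing some sharpness for a cleaner statement.

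For the lower bound, I would use $4N_4 + N_3 \geq N_4 + N_3 = N$, which holds since $N_4 \geq 0$. Substituting into the lower bound of Theorem~\ref{superweak} yields
$$\frac{N - 8}{32} \cdot V_8 \;\leq\; \frac{4N_4 + N_3 - 8}{32} \cdot V_8 \;<\; \vol(\PP).$$

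For the upper bound, rather than jointly optimizing over $(N_3, N_4)$ subject to $N_3 + N_4 = N$, I would estimate the two summands separately. From $2 N_4 \leq 3 N_4$ one obtains $2 N_4 + 3 N_3 \leq 3 N$, and from $15 N_3 \leq 20 N_3$ one obtains $15 N_3 + 20 N_4 \leq 20 N$. Therefore
$$\vol(\PP) \;<\; \frac{3N - 2}{4} \, V_8 \;+\; \frac{20N}{16} \, V_3 \;=\; \left(\frac{3 V_8}{4} + \frac{5 V_3}{4}\right) N - \frac{V_8}{2}.$$
With $V_8 \approx 3.66386$ and $V_3 \approx 1.01494$, the slope $(3 V_8 + 5 V_3)/4$ evaluates to approximately $4.0166$ and the intercept $V_8/2$ to approximately $1.8319$, matching the stated bound.

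There is no substantive obstacle here; the entire argument is linear programming in one variable, and the only real choice is how aggressively to estimate. A joint optimization (setting $N_3 = N$, $N_4 = 0$) would yield the slightly tighter slope $\tfrac{3V_8}{4} + \tfrac{15V_3}{16} \approx 3.6994$, but the per-term bound above is cleaner, treats the two vertex types symmetrically, and suffices for the stated corollary.
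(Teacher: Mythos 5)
Your derivation is correct and reproduces exactly the constants in the corollary: the slope $\tfrac{3V_8}{4}+\tfrac{5V_3}{4}\approx 4.0166$ and intercept $\tfrac{V_8}{2}\approx 1.8319$, which is precisely the ``compromise'' the paper alludes to. This is the same route the paper takes (the paper omits the one-line estimation); your aside about the tighter slope $\tfrac{3V_8}{4}+\tfrac{15V_3}{16}$ from setting $N_4=0$ is also correct, since the coefficient of $N_4$ in the expression being maximized, namely $-\tfrac{V_8}{4}+\tfrac{5V_3}{16}$, is negative.
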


We also characterize the smallest--volume Coxeter $n$--prism for each $n \geq
4$.  An \textbf{$n$--prism} is a polyhedron having $1$--skeleton that is the
combinatorial type of an $n$--gon crossed with an interval.  In some sense,
this is the extreme opposite case to Theorem~\ref{superweak} in that
$n$--prisms contain ``many" prismatic $4$-circuits.

\begin{theorem}\label{prismsum}
Suppose that $\PP$ is a non--obtuse hyperbolic $n$--prism with no dihedral
angles in the interval $(\pi/3,\pi/2)$. Then 

$$(n-3)\cdot \vol(C_1(\pi/3)) < \vol(\PP) < \frac{3n-4}{2}\cdot V_8.$$
\end{theorem}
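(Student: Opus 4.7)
The plan is to prove the upper and lower bounds separately, using Schl\"{a}fli's formula for the upper bound and a geometric decomposition along prismatic $4$-circuits for the lower bound.

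For the upper bound, the key ingredient is Schl\"{a}fli's formula
$$\frac{d \vol(\PP_t)}{dt} = -\frac{1}{2} \sum_e \ell_e \frac{d\theta_e}{dt},$$
which makes the volume a strictly decreasing function of any single dihedral angle with the rest held fixed. Since every dihedral angle of $\PP$ lies in $(0,\pi/3]\cup\{\pi/2\}$, I would continuously deform $\PP$ by decreasing every angle currently in $(0,\pi/3]$ toward $0$ while holding the right angles fixed. By Schl\"{a}fli this strictly increases the volume at every step. The limiting object is, after collapsing zero-angle edges to ideal edges, an ideal right-angled polyhedron of the same combinatorial type, whose volume is bounded by $\frac{3n-4}{2}\,V_8$ using the author's previous bounds for right-angled hyperbolic polyhedra or a direct decomposition into ideal right-angled octahedra. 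The main check is that the deformation stays within Andreev's space of admissible dihedral angle labelings, and that strict inequality is preserved because $\PP$ is not itself the ideal right-angled limit.

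For the lower bound, I would exploit the prismatic $4$-circuits available in any $n$-prism: each of the $n-3$ diagonals of the top $n$-gon induces a prismatic $4$-circuit in the $1$-skeleton of $\PP$, consisting of that diagonal together with the corresponding two vertical edges and the matching bottom diagonal. By the orbifold-decomposition machinery underlying the main theorem of the paper, each such $4$-circuit is realized by an embedded totally geodesic disk which splits $\PP$ into smaller non-obtuse hyperbolic prisms. Cutting iteratively along a pairwise non-crossing family of $n-3$ such $4$-circuits produces a disjoint union of $n-2$ combinatorial triangular prisms, each still with dihedral angles in $(0,\pi/3]\cup\{\pi/2\}$. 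I would then argue that $\vol(C_1(\pi/3))$ is a strict lower bound for the volume of any non-obtuse compact hyperbolic triangular prism with no angles in $(\pi/3,\pi/2)$; summing over the $n-2$ pieces and absorbing one piece's worth of slack into the strictness of the per-piece bound yields $(n-3)\,\vol(C_1(\pi/3)) < \vol(\PP)$.

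The main obstacle is the simultaneous geometric realization of the $n-3$ prismatic $4$-circuits as pairwise disjoint embedded totally geodesic disks whose complementary pieces are themselves non-obtuse hyperbolic polyhedra with the inherited angle labels. This is the specialization to the prism setting of the general geometric decomposition developed in the paper, and its verification ultimately rests on Andreev's theorem applied to each piece. A secondary technical issue is pinning down $C_1(\pi/3)$ as the volume-minimizer among non-obtuse Coxeter triangular prisms with angles restricted to $(0,\pi/3]\cup\{\pi/2\}$, which reduces to a finite enumeration of admissible angle labelings together with a Schl\"{a}fli-style monotonicity argument in each case.
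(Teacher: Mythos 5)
Your upper-bound plan is close in spirit to the paper's (Theorem~\ref{genup} combined with Theorem~\ref{pi2generalupper}), but the limit you describe is wrong: sending all non-right angles to $0$ pushes the vertex angle sums below $\pi$, so vertices become hyperideal and must be truncated. The geometric limit is the \emph{full truncation} $\widehat{D}_n$ (every vertex of $D_n$ replaced by a triangular face, every edge of $D_n$ collapsed to an ideal vertex), not a polyhedron ``of the same combinatorial type.'' Since all $3n$ edges of $D_n$ join degree-$3$ vertices, $\widehat{D}_n$ is a right-angled \emph{ideal} polyhedron with $3n$ ideal vertices, and the ideal case of Theorem~\ref{pi2generalupper} gives $\vol < \frac{3n-4}{2}V_8$; you also need the geometric convergence $\vol(\PP_t)\to\vol(\widehat{D}_n)$, which the proof of Theorem~\ref{genup} obtains from generalized hyperbolic Dehn filling.

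The lower bound has a genuine gap. A generic non-obtuse hyperbolic $n$-prism does \emph{not} admit $n-3$ disjoint embedded totally geodesic quadrilaterals of the kind you describe: a quadrilateral whose two opposite edges lie on the vertical geodesics $c_i$ and $c_j$ requires those two geodesics to be coplanar, a restrictive condition that fails for generic dihedral angle labelings, while an orthogonal cut through the side faces $F_i,F_j$ alters the combinatorics so the pieces are no longer triangular prisms carrying your stated angle labels. Moreover, $C_1(\pi/3)$ is the Lambert cube---a combinatorial $4$-prism, not a triangular prism---so it cannot serve as a per-piece lower bound without a separate argument. The paper's route is essentially different: Lemma~\ref{redrep} first performs an angle-nondecreasing (hence volume-nonincreasing) Schl\"afli deformation to a ``basic'' $n$-prism in which the two side faces adjacent to $c_1$ carry only $\pi/2$ angles, making $c_1$ a genuine geodesic edge; Lemma~\ref{ddecomp} then decomposes the basic prism along the $n-4$ geodesic planes containing $c_1$ and orthogonal to each remaining side face, giving $n-3$ cubes of type $C_1(\mu)$ or $C_2(\nu)$; and the comparison $V_2>V_1$ together with the convexity of $V_1$ (Lemmas~\ref{cubelemma} and \ref{convex}) identifies the alternating prism, namely $n-3$ copies of $C_1\bigl(\pi/(2(n-3))\bigr)$, as the minimizer, from which the stated bound follows by one more angle-increasing Schl\"afli step. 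Your plan omits the normalization to a basic prism, proposes a decomposition whose pieces generically do not exist as hyperbolic polyhedra, and compares against a cube where a triangular prism would be required.
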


The proof for this theorem is given in
Section~\ref{C:prisms}.  The constant $\vol(C_1(\pi/3))$ is the volume of the
Lambert cube with essential angles equal to $\pi/3$.  Its value is
approximately $.324423.$  See Section~\ref{S:cubes} for more details.  The
restrictions on the dihedral angles in this theorem are necessary.  If the
angles are not bounded away from $\pi/2$, there exist examples of hyperbolic
$n$--prisms with arbitrarily small volume.

The main technique used in this paper is to use Schl\"{a}fli's formula to
control how the volume of a hyperbolic polyhedron changes as its dihedral
angles are varied.  Schl\"{a}fli's formula implies that the volume of a
hyperbolic polyhedron varies inversely with changes in dihedral angles.  
Both the lower and upper bounds are applications of results of the author
from \cite{volpoly} that gives two--sided combinatorial volume bounds for
right--angled hyperbolic polyhedra.  

For the lower bound, the main idea is to
attempt to increase the dihedral angles of a given hyperbolic Coxeter
polyhedron until they are all $\pi/2$.  For a generic hyperbolic
polyhedron, such a deformation is not possible.  To get around this, the
spherical suborbifold decomposition of Petronio and the Euclidean suborbifold
decomposition of Bonahon--Siebenmann will be used to decompose the polyhedron
into components that either do admit a deformation to a right--angled
hyperbolic polyhedron or that correspond to orbifold Seifert--fiber spaces
that can be obtained as a reflection orbifold
\cite{petronio,bonahonsiebenmann}.  We describe an algorithm that produces 
the suborbifolds provided by the decomposition theorems of Petronio and
Bonahon-Siebenmann.  For the components that admit a deformation to a
right--angled hyperbolic polyhedron, we apply theorems from \cite{volpoly}.
For the orbifold Seifert--fiber space case, we classify such polyhedra
completely give a lower bound for their
volume.  

The upper bound is an application of the upper bounds in \cite{volpoly}.  We
exhibit an angle--nonincreasing deformation from any non--obtuse hyperbolic
polyhedron to one with all right angles.  The resulting polyhedron is
obtained from the original by truncating all finite vertices that are
adjacent to at least one other finite vertex.  

The paper is organized as follows:  In Section~\ref{polyeder}, we state
Andreev's theorem and a generalization.  We describe our methods for
decomposing polyhedra in Sections~\ref{algbs} and \ref{C:decomp}.  In
Section~\ref{deformation} we prove the lower bound in Theorem~\ref{superweak}
by applying the decompositions from Section~\ref{C:decomp}.
In Section~\ref{C:prisms}, Theorem~\ref{prismsum} is proved and the
techniques to strengthen Theorem~\ref{superweak} are introduced.
Section~\ref{upperbound} proves the upper bounds in Theorems~\ref{superweak}
and \ref{prismsum} via a stronger theorem.  In the concluding
Section~\ref{summary} the techniques for computing our bounds on any
non--obtuse hyperbolic polyhedron are summarized and an example is given.

\section{Polyhedra and Andreev's theorem}\label{polyeder}

In this section, we introduce the relevant terminology pertaining to polyhedra.  We
also state Andreev's theorem and a generalization.  These theorems classify
non--obtuse hyperbolic polyhedra in terms of the combinatorics of their
$1$--skeleta.

An \textit{abstract polyhedron} is a cell complex on $S^2$ that can be
realized by a convex Euclidean polyhedron.  A theorem of Steinitz says that
realizability as a convex Euclidean polyhedron is equivalent to the
$1$--skeleton of the cell complex being $3$--connected \cite{steinitz}.  A
graph is $3$--connected if the removal of any $2$ vertices along with their
incident open edges leaves the complement connected.  Define a \textit{labeling}
of an abstract polyhedron $P$ to be a function $$\Theta: \text{Edges}(P) \to
(0,\pi).$$  A \textit{non--obtuse} labeling is one where $\text{Image}(\Theta)
\subset (0, \pi/2]$.   A pair $(P, \Theta)$ where $P$ is an abstract
polyhedron and $\Theta$ is a labeling of $P$ is a \textit{labeled abstract
polyhedron}.  A labeled abstract polyhedron where the image of $\Theta$ is
contained in the set $\{\pi/n \, \mid \, n \in \ZZ,\, n\geq 2\}$ is an
\textbf{abstract Coxeter polyhedron}.  An abstract Coxeter polyhedron $P$ gives
rise to an orientable $3$-orbifold $\QQ_{P}$ with base space $S^3$ and
singular locus consisting of a planar embedding of $P^{(1)}$.

A \textit{hyperbolic polyhedron} is the closure of a non--empty intersection
of finitely many open hyperbolic half--spaces.    There is a minimal
collection of half--spaces that determine the polyhedron.  The geodesic
planes in this minimal collection that bound the half--spaces are the
\textit{defining planes}.  Note that this definition allows for polyhedra of
infinite volume.

In the projective model of $\HH^3$, the defining planes extend to affine
planes in $\RR^3 \subset \RP^3$.  A \textit{vertex} of the polyhedron is a
point that is the intersection of $3$ or more of the extended defining
planes that lies in the intersection of the extended half--spaces in $\RP^3$.  A vertex is
said to be \textit{finite} if it lies in $\HH^3$, \textit{ideal} if it lies
in $S^2_{\infty} :=\overline{\HH}^3 \setminus \HH^3$, and \textit{hyperideal}
if it lies outside of $\overline{\HH}^3$.  A compact polyhedron has all
finite vertices.  A polyhedron with all ideal vertices is an \textit{ideal
polyhedron}.  
A \textit{hyperideal polyhedron} is a polyhedron that has at least one
hyperideal vertex.  A \textit{generalized polyhedron} is one where the
vertices may be finite, ideal, or hyperideal.

A labeled abstract polyhedron $(P,\Theta)$ is said to be \textit{realized by
$\PP$} if $\PP$ is a generalized hyperbolic polyhedron such that there is a
label--preserving cellular map between $(P,\Theta)$ and $\PP^{(1)}$, labeled
by dihedral angles.  A simple closed curve consisting of $k$ edges of $P^*$
is a \textit{$k$--circuit}, where $P^*$ is the dual graph to $P$.  If no two
edges in $P^*$ traversed by a $k$--circuit $\gamma$
are edges of a common face of $P^*$, then $\gamma$ is a \textit{prismatic $k$--circuit}.

Andreev's theorem gives necessary and sufficient conditions for a labeled
abstract polyhedron to be realizable as a hyperbolic polyhedron
\cite{andreev1,andreev2}.  An error in Andreev's proof was corrected by Roeder,
Hubbard, and Dunbar \cite{roeder}.  Hodgson also showed how Andreev's theorem
can be deduced from Rivin's characterization of convex hyperbolic polyhedra
\cite{hodand,rivpoly}.

\begin{theorem}[Andreev's theorem]\label{and}
A non--obtuse labeled abstract polyhedron $(P, \Theta)$ that has more than
$4$ vertices is realizable as a finite
volume hyperbolic polyhedron if and only if the following hold:
\begin{enumerate}
\item Each vertex meets $3$ or $4$ edges.
\item If $e_i,$ $e_j,$ and $e_k$ share a vertex then
$\Theta(e_i)+\Theta(e_j)+\Theta(e_k) \geq \pi$.
\item If $e_i,$ $e_j,$ $e_k,$ and $e_l$ share a vertex then
$\Theta(e_i)+\Theta(e_j)+\Theta(e_k)+\Theta(e_l) = 2 \pi$.
\item If $e_i,$ $e_j,$ and $e_k$ form a prismatic $3$--circuit, then 
$\Theta(e_i)+\Theta(e_j)+\Theta(e_k) < \pi.$
\item If $e_i,$ $e_j,$ $e_k,$ and $e_l$ form a prismatic $4$--circuit, then 
$\Theta(e_i)+\Theta(e_j)+\Theta(e_k)+ \Theta(e_l) < 2\pi.$
\item If $P$ has the combinatorial type of a triangular prism with edges
$e_i,$ $e_j,$ $e_k,$ $e_p,$ $e_q,$ $e_r$ along the triangular faces, then
$\Theta(e_i)+\Theta(e_j)+\Theta(e_k)+ \Theta(e_p)+\Theta(e_q)+\Theta(e_r) < 3\pi.$
\item If faces $F_i$ and $F_j$ meet along an edge $e_{ij}$, faces $F_j$ and
$F_k$ meet along an edge $e_{jk}$, and $F_i$ and $F_k$ intersect in exactly one
ideal vertex distinct from the endpoints of $e_{jk}$ and
$e_{ij}$, then $\Theta(e_{ij})+\Theta(e_{jk}) < \pi$.  
\end{enumerate}
Up to isometry, the realization of an abstract polyhedron is unique.  The
ideal vertices of the realization are exactly those degree $3$ vertices for
which there is equality in condition (2) and the degree $4$ vertices.
\end{theorem}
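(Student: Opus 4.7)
The plan is to prove both directions of the equivalence. Necessity of conditions (1)--(7) is a case-by-case geometric check. Conditions (1)--(3) come from examining the spherical link of a vertex: its interior angles are the dihedral angles along the edges meeting at that vertex, so the angle sum equals $(n-2)\pi$ plus the spherical area of the link. The non-obtuse hypothesis forces this sum to be at most $n\pi/2$, which immediately bounds $n \leq 4$, and comparing the two bounds yields the strict inequality (2) for finite degree-$3$ vertices together with the forced equality (3) for degree-$4$ vertices, which must therefore be ideal. Conditions (4)--(6) rule out configurations of pairwise non-adjacent defining planes whose common extended intersection in $\RP^3$ would violate convexity in $\HH^3$, and (7) is the ideal analogue for two non-adjacent faces sharing an ideal vertex.

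For sufficiency and uniqueness, I would use the continuity method. Fix an abstract polyhedron $P$, let $\AA(P) \subset (0,\pi/2]^{E(P)}$ be the locally closed subset cut out by (2)--(7), and let $\mathcal{H}(P)$ be the moduli space of realizations of $P$ modulo isometry, viewed as an open subset of the configuration space of $F$ defining half-spaces modulo $\Isom(\HH^3)$. The dihedral-angle map $\alpha\colon \mathcal{H}(P) \to \AA(P)$ is smooth, and the goal is to show it is a homeomorphism. The plan is to verify: (i) $\alpha$ is injective (rigidity), by a Cauchy-style sign-change argument on the $1$-skeleton; (ii) $\alpha$ is a local diffeomorphism, via an infinitesimal rigidity computation together with a dimension count matching $3F-6$ essential plane parameters against the vertex-closure constraints; (iii) $\alpha$ is proper; and (iv) $\AA(P)$ is connected and contains at least one realized labeling, established by exhibiting an explicit right-angled seed in each combinatorial class.

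The main obstacle is (iii), properness of $\alpha$. One must show that no sequence of realizations in $\mathcal{H}(P)$ whose dihedral labelings stay inside $\AA(P)$ can degenerate. Degenerations fall into two types: a vertex may run off to infinity, becoming ideal or hyperideal in the limit, or an edge length may shrink to zero, collapsing a face. Each such degeneration forces one of (2)--(7) to approach equality, and the heart of the argument---also the step where Andreev's original proof contained a gap later repaired by Roeder, Hubbard, and Dunbar---is to match these boundary strata bijectively with combinatorial degenerations of $(P,\Theta)$, ruling out extraneous approaches to $\partial \AA(P)$ from within. Once (iii) is established, $\alpha$ is a proper local homeomorphism onto the connected base $\AA(P)$, hence a covering map, and the seed realization from (iv) forces it to have degree one, completing the proof. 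The final clause describing which vertices are ideal then drops out of the boundary analysis carried out under (iii), since the equality case of (2) and the identity (3) are exactly the combinatorial signatures produced when a finite vertex migrates to the sphere at infinity.
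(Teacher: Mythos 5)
The paper never proves Andreev's theorem; it states it and cites Andreev's original papers together with the Roeder--Hubbard--Dunbar correction, then uses the result as a black box for the rest of the arguments. So there is no internal proof to compare against---your blind reconstruction is really an attempt to prove the cited theorem from scratch.

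As an outline, your sketch tracks the standard continuity-method proof due to Andreev and reproduced with corrections by Roeder, Hubbard, and Dunbar: (i) injectivity of the angle map by a Cauchy-type rigidity argument, (ii) local homeomorphism by infinitesimal rigidity and a dimension count, (iii) properness, (iv) connectedness and non-emptiness of the target $\AA(P)$, and then the covering-of-degree-one conclusion. Two things to tighten. First, in the necessity sketch for (4)--(6), the three (or four) faces traversed by a prismatic circuit are pairwise \emph{adjacent}---what they fail to do is share a vertex---and the point is that an angle sum violating the stated inequality would force their defining planes to meet in a finite or ideal point of $\overline{\HH}^3$, giving a vertex the combinatorics does not have. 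Second, and more substantively, you place the gap that Roeder, Hubbard, and Dunbar repaired inside step (iii), the properness of $\alpha$. That is not where the error was: they located it in Andreev's argument that the admissible region $\AA(P)$ is \emph{non-empty}---your step (iv). Connectedness of $\AA(P)$ is the cheap part, since its closure is a convex polytope cut out by linear inequalities; the substantive work, and the part requiring repair, is exhibiting at least one admissible labeling for every abstract polyhedron satisfying the combinatorial hypotheses. Once non-emptiness and properness are both secured, the proper-local-homeomorphism-onto-a-connected-base argument you describe does close the loop, delivering existence, uniqueness, and the boundary dictionary identifying ideal vertices with the equality cases of (2) and (3).
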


From this point forward, we assume that all vertices of abstract polyhedra
are of degree $3$ or $4$ unless we indicate otherwise.

The following is a generalization of Andreev's theorem that characterizes
generalized hyperbolic polyhedra.

\begin{theorem}\label{andbb}
Suppose that $(P,\Theta)$ is a non--obtuse labeled abstract polyhedron that
has more than $4$ vertices and is not a triangular prism.  Then $(P,\Theta)$
is realizable as a generalized hyperbolic polyhedron $\PP$ if and only if the
following conditions hold:
\begin{enumerate}
	\item If $e_i, e_j,$ and $e_k$ form a prismatic $3$--circuit, then 
		$\Theta(e_i)+\Theta(e_j)+\Theta(e_k) < \pi$,
	\item If $e_i, e_j, e_k,$ and $e_l$ form a prismatic $4$--circuit, then
		$\Theta(e_i)+\Theta(e_j)+\Theta(e_k)+\Theta(e_l) < 2\pi$, and
\item If faces $F_i$ and $F_j$ meet along an edge $e_{ij}$, faces $F_j$ and
$F_k$ meet along an edge $e_{jk}$, and $F_i$ and $F_k$ intersect in exactly one
ideal vertex distinct from the endpoints of $e_{jk}$ and
$e_{ij}$, then $\Theta(e_{ij})+\Theta(e_{jk}) < \pi$.  
\end{enumerate}
Moreover, a vertex of $\PP$ is finite, ideal or hyperideal if the link of the
vertex is spherical, Euclidean or hyperbolic respectively.  The polyhedron
$\PP$ has finite volume if and only there are no hyperideal vertices.
\end{theorem}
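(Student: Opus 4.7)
The plan is to reduce Theorem~\ref{andbb} to Andreev's theorem (Theorem~\ref{and}) by truncating every combinatorial vertex of $P$ whose labeling would force a hyperideal realization. For \textbf{necessity}, each of (1)--(3) expresses a standard hyperbolic-geometric obstruction insensitive to vertex type: a prismatic $3$- or $4$-circuit in a realization would bound a totally geodesic polygon whose angle defect must be positive, yielding the listed strict inequality; condition (3) follows because two half-planes sharing an ideal point cannot meet a third plane in dihedral angles summing to $\pi$ or more without the configuration degenerating at $S^2_\infty$.

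For \textbf{sufficiency}, classify each vertex $v$ of $P$ as \emph{spherical}, \emph{Euclidean}, or \emph{hyperbolic} according to whether the sum of incident dihedral angles exceeds, equals, or falls short of $(\deg v -2)\pi$. Form a new labeled abstract polyhedron $(P',\Theta')$ by truncating each hyperbolic vertex $v$: insert a new face $F_v$ meeting the $\deg v$ edges incident to $v$, label each of the $\deg v$ new edges by $\pi/2$, and leave $\Theta$ unchanged elsewhere. The plan is to verify that $(P',\Theta')$ satisfies all seven conditions of Theorem~\ref{and}, apply Andreev's theorem to obtain a finite-volume realization $\PP'$, and then recover $\PP$ by discarding the truncation half-spaces and extending the remaining defining planes until they meet at hyperideal points in $\RP^3$. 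The projective model shows that the defining planes meeting at $v$ intersect in $\HH^3$, on $S^2_\infty$, or outside $\overline{\HH}^3$ precisely when the link of $v$ is spherical, Euclidean, or hyperbolic, which gives the vertex-type statement and the finite-volume characterization simultaneously.

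Conditions (1)--(3) of Andreev's theorem for $(P',\Theta')$ come for free: preserved vertices of $P$ are spherical or Euclidean by the classification, while each new vertex has degree $3$ with angle sum $\pi/2 + \pi/2 + \theta_i > \pi$. The prismatic conditions (4)--(6) and the ideal-adjacency condition (7) are checked by a case analysis on how many edges of truncation faces a given circuit traverses: subtracting $\pi/2$ for each such edge reduces every case either to hypothesis (1), (2), or (3) of Theorem~\ref{andbb} or to the defining hyperbolic-link inequality at a truncated vertex.

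\textbf{The main obstacle} is precisely this case analysis. New prismatic circuits of length $3$ or $4$ appear whenever two or more hyperbolic vertices of $P$ share a face, and one must verify the strict inequalities after the $\pi/2$ substitutions; in particular the ``not a triangular prism'' hypothesis is used to rule out the one combinatorial configuration in which three truncation faces would create a prismatic $3$-circuit for which the reduction yields only a non-strict inequality. Handling (7) requires tracing which originally-hyperideal vertices become ideal after truncation, so that no forbidden pair of ideal adjacencies is introduced.
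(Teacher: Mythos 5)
The paper does not actually supply a proof of Theorem~\ref{andbb}; it is asserted as ``a slight strengthening of Bao--Bonahon's characterization of hyperideal polyhedra'' with a citation, so there is no in-paper argument for you to match. Your reduction to Theorem~\ref{and} by truncating hyperbolic-link vertices is a legitimate and natural strategy, and the necessity direction and the dichotomy spherical/Euclidean/hyperbolic $\leftrightarrow$ finite/ideal/hyperideal via polar planes are fine.

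The problem is exactly where you locate it: the case analysis for the truncated polyhedron $P'$ is left as ``the main obstacle,'' and in at least one case the claimed reduction does not go through. Consider a degree-$4$ vertex $v$ with hyperbolic link, so $v$ is truncated and $F_v$ is a quadrilateral face. Let $G_1,G_3$ be the two opposite faces around $v$, and suppose some face $B$ (not among the four faces at $v$) meets both $G_1$ and $G_3$ along edges $e_{1B},e_{3B}$, neither incident to $v$. Then $F_v, G_1, B, G_3$ is a prismatic $4$-circuit in $(P')^*$, and Andreev's condition (5) demands $\pi/2 + \Theta(e_{1B}) + \Theta(e_{3B}) + \pi/2 < 2\pi$, i.e.\ $\Theta(e_{1B})+\Theta(e_{3B})<\pi$. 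The hyperbolic-link inequality at $v$ bounds only the four angles at $v$ and says nothing about $e_{1B},e_{3B}$; hypothesis (1) gives nothing since $G_1\cap G_3$ is only the vertex $v$; hypothesis (2) gives nothing, since in $P$ the cycle $G_1,G_2,B,G_3$ (or through $G_4$) has two edges meeting at $v$ and so is not prismatic; and hypothesis (3) is stated only for an \emph{ideal} vertex, whereas $v$ is hyperideal. So ``subtract $\pi/2$ per truncation edge and reduce to a hypothesis or a link inequality'' fails here, and you would either need to prove this combinatorial configuration cannot occur or derive the inequality from geometry directly. Your speculation about the role of the ``not a triangular prism'' hypothesis is also off: that exclusion is there because Andreev's theorem treats triangular prisms via a separate inequality (condition (6)), not because of a prismatic $3$-circuit produced by three truncation faces (one can check that no new prismatic $3$-circuits through a truncation face ever arise, since two edges of a truncation triangle always share a vertex and, for a truncation quadrilateral, the opposite original faces never share an edge). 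Until that degree-$4$ case is handled, the sufficiency direction is incomplete.
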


This theorem is a slight strengthening of Bao--Bonahon's characterization of
hyperideal polyhedra \cite{baobonahon} that is weaker than Andreev's theorem
for finite volume non--obtuse polyhedron.

\section{Algorithm detecting the geometric decomposition of polyhedral
orbifolds}\label{algbs}

Throughout this section, $(P,\Theta)$ will be a abstract Coxeter
polyhedron such that $P$ is trivalent and the sum of the labels given by
$\Theta$ to any three edges that share a vertex is greater than $\pi$.  Let
$\QQ_P$ be the compact orientable orbifold with base space equal to $S^3$ and
singular locus a planar embedding of $P$ with cone angles along edges of $P$
equal to twice the labeling given by $\Theta$.  The angle sum condition on
$\Theta$ ensures that $\QQ_P$ is a compact orbifold.  A labeling of the edges
of the dual graph, $P^*,$ is induced by labeling an edge of $P^*$ by the same
label as the corresponding edge in $P$.  

The following theorem follows from a theorem of Petronio that applies to
general $3$-orbifolds \cite{petronio}.  We
will give a simple proof in the case of polyhedral orbifolds.

\begin{theorem}[Petronio]\label{kmppolyorb}
	Let $P$ be an abstract Coxeter polyhedron.  Then there exists a unique
	spherical 2-suborbifold $\S$ of $\QQ_{P}$ such that each component of
	$\QQ_{P} \setminus \S$ with spherical boundary components capped off by
	orbifold balls is orbifold-irreducible.
\end{theorem}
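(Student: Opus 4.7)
The plan is to translate essential spherical $2$-suborbifolds of $\QQ_P$ into combinatorial features of $P$, then run an induction on $|E(P)|$, using the hypotheses (trivalence of $P$ and angle sum $>\pi$ at every vertex) to keep cut-and-cap operations inside the Coxeter class.

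First I would classify. An embedded $2$-sphere $\Sigma\subset S^3$ transverse to $P^{(1)}$, crossing edges with labels $\pi/p_1,\dots,\pi/p_k$, has orbifold Euler characteristic
\[
\chi^{\mathrm{orb}}(\Sigma)=2-\sum_{i=1}^k\bigl(1-1/p_i\bigr),
\]
and demanding that $\Sigma$ admit a spherical structure restricts the data $(k;p_1,\dots,p_k)$ to the list $\{(0);(2;n,n);(3;2,2,n);(3;2,3,3);(3;2,3,4);(3;2,3,5)\}$. The case $k=0$ is always inessential since such a $\Sigma$ bounds a ball disjoint from the singular locus. The case $k=2$ is ruled out because the two crossed edges would form a $2$-edge-cut in $P^{(1)}$, impossible when $P$ is trivalent and $3$-connected (the edge-connectivity is then exactly $3$). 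For $k=3$ the three crossed edges form a $3$-edge-cut of $P^{(1)}$, which is either trivial (the three edges at a single vertex, so $\Sigma$ bounds the vertex link, an orbifold ball) or non-trivial (a prismatic $3$-circuit in $P^*$). Thus essential spherical $2$-suborbifolds are in bijection with prismatic $3$-circuits whose labels $(\pi/p,\pi/q,\pi/r)$ satisfy $1/p+1/q+1/r>1$.

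For existence I would induct on $|E(P)|$. If no such prismatic $3$-circuit exists then $\QQ_P$ is already orbifold-irreducible and $\S=\emptyset$ suffices. Otherwise select a spherical prismatic $3$-circuit $\gamma$ with associated $2$-sphere $\Sigma_\gamma$; cut $\QQ_P$ along $\Sigma_\gamma$ and cap each side by an orbifold $3$-ball whose singular locus is the cone on the spherical triangle group $(p,q,r)$. The two resulting polyhedral orbifolds $\QQ_{P_1},\QQ_{P_2}$ come from abstract Coxeter polyhedra $P_i$ with strictly fewer edges than $P$: trivalence is inherited, and the new cap vertex has angle sum $>\pi$ by the spherical condition on $\gamma$. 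The inductive hypothesis supplies $\S_i$ for each $\QQ_{P_i}$, and $\S:=\Sigma_\gamma\cup\S_1\cup\S_2$ is the desired decomposition.

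The main obstacle is uniqueness up to isotopy. I would combine the rigid combinatorial parameterization from the first step with the observation that any two $2$-spheres in $S^3$ can be isotoped to be disjoint via a standard innermost-disk argument, chosen so that reducing disks avoid cone points and hence respect transversality with $P^{(1)}$. Any two maximal disjoint collections of essential spherical $2$-suborbifolds then realize the same finite set of prismatic $3$-circuits with spherical labels, and are therefore isotopic. Termination and uniqueness of the inductive procedure follow from the finiteness of prismatic $3$-circuits in $P^*$.
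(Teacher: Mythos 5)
Your argument follows essentially the same route as the paper: both reduce the theorem to identifying the essential spherical $2$--suborbifolds of $\QQ_P$ as the turnovers dual to spherical prismatic $3$--circuits---your orbifold Euler characteristic and edge-connectivity analysis is exactly the content of the paper's preliminary lemma, which invokes $3$--connectedness directly---and both then deduce uniqueness from the rigidity of that combinatorial parameterization. The only structural deviation is that you realize the turnovers inductively, cutting and capping along one spherical prismatic $3$--circuit at a time, whereas the paper realizes the entire collection as simultaneously disjoint turnovers in one step; your version is a slightly more careful packaging of the same idea.
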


After decomposing into orbifold-irreducible components, we will show how to
decompose along Euclidean $2$-suborbifolds into orbifold atoroidal pieces.
The existence of such a decomposition is implied by the splitting theorem of
Bonahon--Siebenmann \cite{bonahonsiebenmann}.  We we give a constructive proof of their theorem in the
setting of polyhedral orbifolds.  

\begin{theorem}[Bonahon-Siebenmann]\label{bspolyorb}
	Let $P$ be an abstract Coxeter polyhedron such that $\QQ_{P}$ is an
	orbifold-irreducible polyhedral orbifold.  Then there exists a Euclidean
	$2$-suborbifold $\TT$ of $\QQ_{P}$ such that each component of $\QQ_{P}
	\split \TT$ is either an orbifold Seifert fiber space or is orbifold
	atoroidal.  Furthermore, the set of atoroidal components of $\QQ_{P} \split
	\TT$ is canonical. 
\end{theorem}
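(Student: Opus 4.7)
The plan is to characterize Euclidean $2$-suborbifolds of $\QQ_P$ combinatorially as prismatic circuits in the dual graph $P^*$ whose labels satisfy an Euler characteristic condition, and then to take $\TT$ to be a maximal collection of such essential circuits that are pairwise disjoint and non-parallel. Since the underlying space of $\QQ_P$ is $S^3$, every closed separating $2$-suborbifold has underlying surface $S^2$, and after isotopy meets the singular locus $P^{(1)}$ transversely. Such a suborbifold is therefore recorded by its cyclic sequence of crossings with edges of $P$, i.e.\ by a simple closed curve $\gamma$ in the dual graph $P^*$. If $\gamma$ is a $k$-circuit crossing edges labeled $\pi/n_1, \dots, \pi/n_k$, the associated $2$-orbifold is $S^2(n_1,\dots,n_k)$ with orbifold Euler characteristic $2 - \sum_{i=1}^{k}(1-1/n_i)$, which vanishes precisely when $\sum 1/n_i = k-2$. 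This yields the four Euclidean types $(2,2,2,2)$, $(2,3,6)$, $(2,4,4)$, and $(3,3,3)$.

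Next, I would argue that only prismatic circuits can support \emph{essential} Euclidean suborbifolds: if two consecutive edges of $\gamma$ lie in a common face of $P$, the associated suborbifold can be pushed through that face to a simpler one. Since $\QQ_P$ is orbifold-irreducible by hypothesis, a Euclidean prismatic circuit is essential unless it is parallel to an existing boundary component. I then define $\TT$ to be a maximal pairwise disjoint, pairwise non-parallel collection of essential prismatic Euclidean circuits in $P^*$. The classification list above and the finiteness of $P^*$ make this collection effectively constructible: enumerate the prismatic $3$- and $4$-circuits, test the label sums against the Euclidean conditions, and greedily select an independent subfamily, iterating after each cut to pick up circuits that only become prismatic in the reduced polyhedron.

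To verify that each component $\QQ'$ of $\QQ_P \split \TT$ is either orbifold Seifert-fibered or orbifold atoroidal, observe that any further essential Euclidean $2$-suborbifold in $\QQ'$ would correspond to a prismatic Euclidean circuit in $P^*$ disjoint from those used for $\TT$, contradicting maximality --- unless $\QQ'$ contains a continuous family of mutually parallel such suborbifolds. Any such family forces an orbifold Seifert fibering with fibers transverse to the family; these components are identified combinatorially with sub-polyhedra of prism type or ``book'' type whose patterns of matching labels produce the fibering. Canonicity of the atoroidal pieces then follows because any two maximal systems $\TT,\TT'$ agree up to isotopy on each atoroidal component and differ only inside Seifert-fibered components, where the essential Euclidean suborbifold is unique up to the Seifert $S^1$-action.

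The main obstacle I anticipate is this final classification of Seifert-fibered components: pinning down precisely which combinatorial patterns of $(P,\Theta)$ produce orbifold Seifert fiber spaces, and ruling out hidden essential Euclidean suborbifolds that might persist in the alleged atoroidal pieces. In particular, the iterative refinement of the greedy construction must be shown to terminate with a genuinely maximal system rather than a merely locally maximal one, and the resulting combinatorial description of the Seifert pieces must be matched against the purely topological classification provided by Bonahon--Siebenmann.
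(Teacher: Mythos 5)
Your high-level shape---identify Euclidean prismatic circuits, split, classify pieces, argue canonicity---matches the paper, but the steps you flag as ``anticipated obstacles'' are exactly where the proof lives, and some of the steps you treat as routine are not. First, a maximal pairwise-disjoint, pairwise-non-parallel collection of Euclidean prismatic circuits in $P^*$ is not a well-posed canonical object: two Euclidean prismatic $4$--circuits can pass through common vertices in $P^*$ in such a way that splitting along one destroys the other (the paper's notion of an \emph{admissible} pair), so ``disjoint in $P^*$'' is the wrong relation, and a greedy choice really does depend on the order of operations. The paper resolves this by defining the $1$-- and $2$--neighborhoods $N_1(\gamma)$, $N_2(\gamma)$ of a Euclidean prismatic $4$--circuit and their ``boundary'' circuits $\partial N_i(\gamma)$, and then split only along maximal admissible subsets of $\partial N_1 \cup \partial N_2$; termination is then proved via a complexity function $c$ counting such boundary circuits, not by a generic maximality argument. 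Second, your description of Seifert-fibered pieces as detected by ``a continuous family of mutually parallel'' Euclidean suborbifolds is not how the identification is made; the paper proves a standalone classification (Theorem~\ref{sfpoly}) that an orbifold-irreducible $\QQ_P$ is Seifert fibered if and only if $P$ is a non-hyperbolic tetrahedron or a prism with $\pi/2$ on the horizontal edges, and the algorithm recognizes a Seifert component by checking whether every vertex of the component lies in $N_1(\gamma)$ or $N_2(\gamma)$.

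Third, canonicity of the atoroidal pieces is asserted in your last paragraph but never argued. The paper proves it by exhibiting the ambiguity explicitly: when $\delta, \delta' \in \partial N_1(\gamma)$ form an inadmissible pair, one shows that $P^* \split \delta$ followed by the induced splittings and $P^* \split \delta'$ followed by its induced splittings yield graph-isomorphic atoroidal components (the paper's Figure~\ref{ambiguity}), while the Seifert components may differ. Nothing in your greedy construction gives this. Finally, a smaller inaccuracy: ``every closed separating $2$--suborbifold has underlying surface $S^2$'' is false in general (tori separate $S^3$); what you need is that the singular locus is a planar graph whose complement in $S^3$ is a handlebody, hence atoroidal, so every essential Euclidean $2$--suborbifold must meet the singular locus and hence have underlying space $S^2$. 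Your Euler characteristic computation and the list $(2,2,2,2)$, $(2,3,6)$, $(2,4,4)$, $(3,3,3)$ are fine once that is in place, though note the paper disposes of the Euclidean $3$--circuit types before running the toroidal decomposition, leaving only $(2,2,2,2)$ pillowcases to track.
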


An \textbf{orbifold Seifert fiber space} is a $3$-orbifold that fibers over
a $2$-dimensional orbifold such that each fiber has a neighborhood modeled on
$(D^2 \times S^1) \slash G$ where $G$ is a finite group that preserves both
factors of the product.  In the case of polyhedral orbifolds,
Proposition~\ref{sfpoly} which is proved in Section~\ref{S:sf} characterizes
orbifold Seifert fiber spaces.

Combining
our proofs of Theorems \ref{kmppolyorb} and \ref{bspolyorb} gives a
finite--time algorithm that produces the geometric decomposition of any
polyhedral orbifold in terms of the singular locus $P$ alone.

It should be noted that the techniques used in this section can be used to
find the geometric decomposition of any good $3$-orbifold with base space
$S^3$ and planar singular locus.  Any such singular locus must be
$2$-connected with the property that any pair of edges are labeled by the
same cone angle if there exists a simple closed curve in the plane that
intersects the singular locus in exactly those two edges.  Such a simple
closed curve corresponds to a prismatic $2$-circuit and each such circuit
corresponds to an incompressible spherical $2$-suborbifold with two cone
points of the same order.  It follows from a theorem of Cunningham-Edmonds
and the fact that the singular locus is a trivalent graph that there exists a
sequence of decompositions along prismatic $2$--circuits into abstract
polyhedra and bonds \cite{cunninghamedmonds}.  A \textbf{bond} is a graph that
consists of a pair of vertices joined by some number of edges.  If the
decomposition is applied to a trivalent graph with a Coxeter labeling, each
bond will be a pair of vertices joined by $3$ edges with angle sum greater
than $\pi.$  Each of these components is a spherical $3$-orbifold.
Theorem~\ref{kmppolyorb} holds without modification for non--compact
polyhedral orbifolds.  The proof of Theorem~\ref{bspolyorb}
may also be modified to work for non--compact polyhedral orbifolds by adding
a search for $(2,2,\infty)$ turnovers to the algorithm.

\subsection{Definitions}

A prismatic $3$-circuit $\gamma$ is said to be
\textbf{hyperbolic, Euclidean,} or \textbf{spherical} if sum of the labels along
the edges traversed by $\gamma$ is less than, equal to, or greater than $\pi$
respectively.  Similarly a prismatic $4$-circuit $\gamma$ is said to be
\textbf{hyperbolic} or \textbf{Euclidean} if the sum of the labels along
the edges traversed by $\gamma$ is less than $2\pi$ or equal to $2\pi$,
respectively.  This terminology reflects the fact that 
each prismatic $k$-circuit determines a $2$-suborbifold of $\QQ_{P}$ with the
specified geometry.  

If $\gamma$ is a prismatic $3$ or $4$-circuit in $P^*$, define $P^*$
\textbf{split along} $\gamma$, denoted $P^* \split \gamma$, as follows (see
Figure~\ref{split}):  First, form two new graphs $P^*_{\text{int}}$ and
$P^*_{\text{ext}}$ where $P^*_{\text{int}}$ consists of $\gamma$ along with
all edges and vertices interior to $\gamma$ with respect to a planar
embedding of $P^*$ and $P^*_{\text{ext}}$ consists of $\gamma$ along with all
edges and vertices exterior to $\gamma$. Let $\overline{P^*_{\text{int}}}$ be
the graph obtained by coning off the vertices of $\gamma$ in
$P^*_{\text{int}}$ to a vertex chosen to lie in the unbounded region of
$\RR^2 \setminus \gamma$ and $\overline{P^*_{\text{ext}}}$ to be the graph
obtained by coning off the $4$ vertices of $\gamma$ in $P^*_{\text{ext}}$ to
a vertex chosen to lie in the bounded region of $\RR^2 \setminus \gamma$.
Then $P^* \split \gamma$ consists of the disjoint union of
$\overline{P^*_{\text{int}}}$ and $\overline{P^*_{\text{ext}}}$.  Note that
$P^* \split \gamma$ is the union of the dual graphs of the components of
$\QQ_P \split \RRR(\gamma),$ where $\RRR(\gamma)$ is a $2$--suborbifold
realizing $\gamma$ and $\QQ_P \split \RRR(\gamma)$ denotes the closure of the
complement of $\RRR(\gamma)$ in $\QQ_P$. If $P$ is a polyhedral graph labeled by
$\theta$, then $P^* \split \gamma$ inherits a labeling that agrees with
$\theta$ on the original edges and equals $\pi/2$ on
the edges introduced by the splitting process.  The reader should note that
this procedure does not depend on the chosen planar embedding of $P^*$.

\begin{figure}
\labellist
\small\hair 2pt
\pinlabel $\delta$ [b] at 33 51
\pinlabel $\amalg$  at 140 31
\pinlabel $\amalg$  at 271 31
\pinlabel $P^*_{\text{ext}}$ [t] at 113 2
\pinlabel $P^*_{\text{int}}$ [t] at 170 2
\pinlabel $\overline{P^*_{\text{ext}}}$ [t] at 247 2
\pinlabel $\overline{P^*_{\text{int}}}$ [t] at 301 2
\pinlabel $\gamma$ [b] at 19 111
\pinlabel $\amalg$  at 140 113
\pinlabel $\amalg$  at 269 113
\pinlabel $P^*_{\text{ext}}$ [t] at 113 84
\pinlabel $P^*_{\text{int}}$ [t] at 170 84
\pinlabel $\overline{P^*_{\text{ext}}}$ [t] at 247 84
\pinlabel $\overline{P^*_{\text{int}}}$ [t] at 301 84
\endlabellist
\begin{center}
\scalebox{.9}{\includegraphics{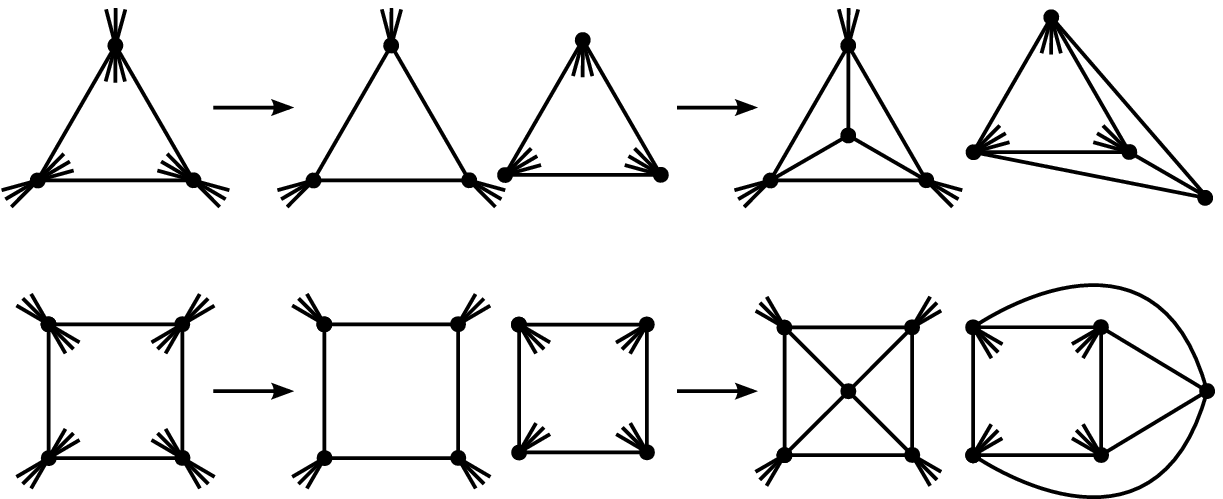}}
\end{center}
\caption{Schematic of splitting along a prismatic $3$-circuit $\gamma$ and a
prismatic $4$-circuit $\delta$}
\label{split}
\end{figure}

\subsection{Spherical decomposition}

A \textbf{turnover} is a $2$-orbifold of the form $S^2(p,q,r)$.  The notation
$S^2(p,q,r)$ indicates that the base space is $S^2$ and that the singular
locus consists of three cone points with cone angles $2\pi/p$, $2\pi/q$ and
$2\pi/r$.  A $2$-suborbifold $S$ of a $3$-orbifold $\QQ$ is
\textbf{incompressible} if either $\chi(S)>0$ and it does not bound an orbifold
ball in $\QQ$ or $\chi(S) \leq 0$ and any $1$-suborbifold on $S$ that bounds
an orbifold disk in $\QQ \setminus S$ bounds an orbifold disk in $S$.    A
$3$-orbifold $\QQ$ is said to be \textbf{orbifold irreducible} if every
spherical $2$-suborbifold bounds an orbifold ball.

\begin{lemma}
Every incompressible spherical $2$--suborbifold of $\QQ_{P}$ is a spherical turnover
that intersects $\Sigma(\QQ_{P})$ transversely in three edges with mutually
disjoint endpoints.
\end{lemma}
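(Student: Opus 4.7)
The plan is to combine the classification of closed orientable spherical $2$-orbifolds with a $3$-connectivity analysis of $P$. Recall that the closed orientable spherical $2$-orbifolds are exactly $S^2$, the footballs $S^2(n,n)$, and the turnovers $S^2(p,q,r)$, so an embedded spherical $2$-suborbifold $S \subset \QQ_P$ meets $\Sigma(\QQ_P) = P$ transversely in $k \in \{0,2,3\}$ points. I would aim to show that $k=0$ and $k=2$ force $S$ to bound an orbifold ball, and that $k=3$ together with incompressibility forces the three intersected edges to have mutually disjoint endpoints.

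The preparatory step is to isotope $S$ into general position with the $2$-sphere $\Pi \subset S^3$ containing the planar embedding of $P$, and to remove components of $S \cap \Pi$ disjoint from $P$ using an innermost-disk surgery in $\Pi$ combined with Alexander's theorem inside the two $3$-balls of $S^3 \setminus \Pi$. After this, every component of $S \cap \Pi$ is a simple closed curve crossing $P$ at least once. Now I would invoke Steinitz: $P$ is $3$-connected and simple, hence $3$-edge-connected, and its planar dual $P^*$ is also simple (any two faces of $P$ share at most one edge). A component of $S \cap \Pi$ crossing $P$ in $m$ points yields an $m$-edge cut of $P$, with both sides nonempty because the endpoints of the crossed edges are split between the two disks the curve bounds in $\Pi$. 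So components with $m \in \{1,2\}$ are ruled out, which immediately disposes of the case $k=0$ (where $S$ must actually be disjoint from $\Pi$ and hence bounds an orbifold ball on the side of $\Pi$ not containing $P$) and the case $k=2$ (which cannot occur at all).

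For $k=3$ the same considerations force $S \cap \Pi$ to be a single simple closed curve $\gamma$ meeting $P$ in three points on edges $e_1, e_2, e_3$; dually these give a $3$-cycle $e_1^* e_2^* e_3^*$ in $P^*$ on three distinct vertices $F_a, F_b, F_c$. I would then split according to whether this $3$-cycle bounds a face of $P^*$. If it bounds a face $F_v$, then $v$ is a trivalent vertex of $P$ and the $e_i$ are the three edges at $v$; then $S$ is isotopic to the link sphere of $v$, and the hypothesis $\Theta(e_1)+\Theta(e_2)+\Theta(e_3) > \pi$ ensures that $v$ has spherical link, so $S$ bounds the vertex-neighborhood orbifold ball, contradicting incompressibility. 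If the $3$-cycle does not bound a face, I claim it is prismatic: were two of $e_1, e_2, e_3$ to share a vertex $v$, then the triangular face $F_v$ of $P^*$ would provide a second $3$-cycle on the same three vertices $F_a, F_b, F_c$, impossible by simplicity of $P^*$. Hence the three edges have mutually disjoint endpoints, completing the classification.

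The main technical obstacle is precisely this last dichotomy, ruling out any intermediate configuration in which exactly two of $e_1, e_2, e_3$ share a vertex while the third does not; it rests on the dual simplicity of $P^*$, equivalently on the observation that in a $3$-connected simple planar graph any two faces share at most one edge, which I would record as an explicit auxiliary remark before running the main argument so that the case analysis in the $k=3$ case is clean.
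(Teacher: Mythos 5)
Your proposal is correct and follows essentially the same route as the paper's proof: both rest on the classification of closed spherical $2$-orbifolds (zero, two, or three cone points) together with $3$-connectedness of $P$. You flesh out what the paper leaves terse — the general-position reduction against the sphere $\Pi \supset P$, the $m$-edge-cut bookkeeping that rules out $m\in\{1,2\}$ and hence zero or two cone points, and the dual-graph case split between a $3$-cycle that bounds a triangular face of $P^*$ (yielding the link sphere of a trivalent vertex, which is compressible because its angle sum makes the link spherical) and a $3$-cycle that does not (where dual simplicity, itself a consequence of $3$-(edge-)connectedness, forbids two of the three crossed edges from sharing an endpoint). This is the same argument as the paper's, organized through $P^*$ rather than directly through the vertices of $P$.
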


\begin{proof}
The fact that $S^3$ contains no incompressible spherical $2$--suborbifolds
implies that any such suborbifold $S$ must intersect the singular locus of
$\QQ_P$.  All spherical $2$--orbifolds have base space $S^2$ and either $0$,
$2$, or $3$ cone points.  The graph $P$ is $3$--connected, so any such
suborbifold must have $3$ cone points.  It also follows from
$3$--connectedness that any $2$--suborbifold $S$ that intersects $2$ edges
sharing a vertex $v$ also must intersect the third edge entering $v$.   Such
a $2$--suborbifold is compressible.
\end{proof}

\begin{proof}[Proof of Theorem~\ref{kmppolyorb}]
Any two prismatic $3$-circuits may be realized by disjoint $2$-suborbifolds
of $\QQ_P$.  Therefore, to construct $\S$, it suffices to take the collection
of spherical $2$-suborbifolds corresponding to the set of all spherical
prismatic $3$-circuits.  After capping off the boundary components of $\QQ_P
\setminus \S$, there are no spherical prismatic $3$-circuits.  This set is clearly unique. 
\end{proof}

\subsection{Definitions concerning $4$-circuits}

Let $\gamma$ be a Euclidean prismatic $4$-circuit with
vertices labeled cyclically by $v_1,$ $v_2,$ $v_3,$ and $v_4$.  Define the
\textbf{$1$-neighborhood} of $\gamma$, $N_1(\gamma)$,  to be the set of
Euclidean prismatic $4$-circuits that share vertices $v_1$ and $v_3$ with
$\gamma$.  Similarly,  define the \textbf{$2$-neighborhood} of $\gamma$,
$N_2(\gamma)$, to be the set of Euclidean prismatic $4$-circuits that share
vertices $v_2$ and $v_4$ with $\gamma$.  Note that $N_1(\gamma) \cap
N_2(\gamma) =
\gamma$.

The \textbf{support} of $N_i(\gamma)$ is the union of vertices and edges
traversed by elements of $N_i(\gamma)$.  Define the
\textbf{boundary} of $N_i(\gamma)$, denoted $\partial N_i(\gamma),$ to be the
set of $\delta \in N_i(\gamma)$ such that either $P^* \split \delta$ has a
component containing either no vertices of $\supp(N_i(\gamma))$ other than
those that are contained in $\delta$ or $P^* \split \delta$ has a component
containing exactly one vertex of $\supp(N_i(\gamma))$ that shares an edge
with each vertex of $\delta$ and consists of at least
$5$ triangles of $P^*$.  

A set $\{\gamma_1,
\gamma_2, \dots , \gamma_n\} \subset N_i(\gamma)$ is said to be
\textbf{admissible} if for each $i \neq j$, $\gamma_i$ is contained
completely in a single component of $P^* \split \gamma_j$.
A prismatic $4$-circuit $\gamma$ is said to be \textbf{trivial} if at least
one component of $\RR^2 \setminus \gamma$ contains exactly $1$ vertex of
$P^*$.  A \textbf{prism} is an abstract polyhedron that is graph isomorphic to the
$1$--skeleton of a polygon crossed with a closed interval.

\subsection{Seifert fibered polyhedral orbifolds}\label{S:sf}

In this section, we provide a complete classification of Seifert-fibered
polyhedral orbifolds.  

\begin{theorem}\label{sfpoly}
Suppose that $\QQ_P$ is a compact irreducible polyhedral orbifold.
Then $\QQ_P$ is orbifold Seifert-fibered if and only if
$P$ is a non-hyperbolic tetrahedron or a prism with labels $\pi/2$ along the
horizontal faces.
\end{theorem}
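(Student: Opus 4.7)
The proof splits naturally into the two directions.

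For the \emph{if} direction, I would handle each case separately. If $P$ is a non-hyperbolic Coxeter tetrahedron, then by the rigidity of $3$-simplices in constant-curvature geometry together with the failure of Andreev's hyperbolic realization conditions (Theorem~\ref{and}), $\QQ_P$ must carry a spherical or Euclidean structure. Both classes of $3$-orbifolds are orbifold Seifert-fibered: in the spherical case the Hopf fibration of $S^3$ descends to any quotient preserving a Killing field, and in the Euclidean case a translation direction with finite stabilizer descends to a fibration. If $P$ is an $n$-prism with all horizontal edges labeled $\pi/2$, I would build the fibration by hand. Place $P$ in $S^3$ so that the axis through the centers of the top and bottom $n$-gons is the core of a Heegaard solid torus; take as fibers the concentric circles encircling this axis, with interval fibers over the top and bottom faces, which become reflector components of the base $2$-orbifold. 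The horizontal $\pi/2$ labels are exactly the condition needed for the top and bottom faces to serve as mirror strata, while the $n$ vertical edges become singular interval fibers whose orders match their $\Theta$-labels.

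For the \emph{only if} direction, suppose $\QQ_P$ is orbifold Seifert-fibered over a $2$-orbifold $B$ with projection $\pi\co \QQ_P \to B$. Each edge of $P$ projects either to a point of $|B|$ (and is itself a fiber) or to an arc (and is transverse to the fibration). Each vertex of $P$ is a $0$-dimensional singular stratum; by the local model of a Seifert-fibered orbifold, such strata arise only where a singular interval fiber terminates at a corner of a reflector stratum of $B$. I would enumerate the global possibilities using trivalence and planarity of $P$, the condition $|\QQ_P|=S^3$, and irreducibility of $\QQ_P$. When $|B|=S^2$ with no reflector boundary, the compatible configurations collapse to the non-hyperbolic tetrahedron case (via its spherical or Euclidean structure). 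When $|B|=D^2$ with reflector boundary, the corners of $|B|$ pair off with the vertical interval fibers, and the two reflector arcs of $\partial|B|$ lift to the top and bottom $n$-gons of a prism; because these $n$-gons lie on mirror strata, their edges are forced to carry label exactly $\pi/2$.

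The main obstacle will be the case analysis in the \emph{only if} direction. One must match the local fibration data at each vertex of $P$ to a globally consistent Seifert structure and rule out singular graphs other than tetrahedra and prisms. Irreducibility is essential here: it forbids the spherical turnovers that would otherwise permit assembling $P$ from smaller Seifert-fibered pieces. I expect the hardest sub-step to be showing that $B$ admits at most two reflector boundary arcs (corresponding to at most two distinguished ``horizontal'' faces of $P$), and that these arcs must carry the same number of corners, which together are what force the $n$-prism structure rather than a more exotic planar trivalent graph.
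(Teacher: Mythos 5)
Your \emph{if} direction for the prism case is muddled. You say to put the axis as the core of a Heegaard solid torus and take ``concentric circles encircling this axis'' as fibers but then switch to ``interval fibers over the top and bottom faces''; these are not fibers of a single consistent Seifert structure. The paper's observation is simply that the reflection orbifold $\PP$ (whose orientation double cover is $\QQ_P$) is literally a product of a $2$--dimensional polygonal reflection orbifold with an interval, and the double of a product $F\times I$ along $F\times\partial I$ is Seifert fibered over $F$ with circle fibers $D(I)$. No solid-torus decomposition of $S^3$ or Hopf-type circle family is needed, and invoking one obscures where the $\pi/2$ horizontal labels are used.

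Your \emph{only if} direction takes a genuinely different route from the paper --- classifying possible base $2$--orbifolds $B$ and working backward --- and this route has real gaps. First, your local model claim that vertex strata of $P$ ``arise only where a singular interval fiber terminates at a corner of a reflector stratum of $B$'' is false in general: a trivalent vertex of $\Sigma(\QQ_P)$ with tetrahedral, octahedral, or icosahedral local group can sit on an exceptional circle fiber lying over an isolated cone point of $B$, with no reflector corner in sight. Second, the actual enumeration you identify as ``the hardest sub-step'' is exactly what remains to be done and is not sketched; you would have to classify Seifert fibered $3$--orbifolds with underlying space $S^3$ and planar trivalent singular locus, which is essentially a restatement of the theorem. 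The paper avoids this by never looking at $B$ directly: since $P$ is not realizable hyperbolically, some Andreev condition fails, so $P$ carries a Euclidean or hyperbolic prismatic $3$--circuit or a Euclidean prismatic $4$--circuit. In the $3$--circuit case, Proposition~2.41 of \cite{CHK} forces the low-angle edges to be fibers, incompressibility forces the spanning suborbifold to be horizontal, and one reads off the triangular prism directly. In the $4$--circuit case, the paper takes the collection $R$ of rectangles realizing all Euclidean prismatic $4$--circuits, forms $\MN(R)$, disposes of the triple-point configuration (the doubled Euclidean box), and then shows each complementary polyhedron $C$ is forced by Andreev's theorem to be a $\pi/2$-horizontal triangular prism. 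This is a combinatorial contradiction argument rather than a classification of fibrations, and it is what makes the case analysis finite and tractable. If you want to salvage your approach you would at minimum need to correct the local model at vertices and give a complete list of admissible base orbifolds together with an argument that each one forces the stated singular locus; that is substantially more work than the paper's route.
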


\begin{proof} 
If $P$ is a non-hyperbolic tetrahedron, necessity is immediate.  If $P$ is
a prism with labels $\pi/2$ along the horizontal faces, then it is a product
of a spherical, Euclidean, or hyperbolic polygon with an interval.  

Suppose that $\QQ_{P}$ is Seifert-fibered which implies that that $P$ is not
hyperbolic.  We will use the conditions in Andreev's theorem to show that the
singular locus of $\QQ_P$ must be as in the conclusion of the theorem.  The fact that $\QQ_{P}$ is irreducible implies that $P$ contains
no spherical prismatic $3$-circuits.  Suppose $P$ contains a Euclidean or
hyperbolic prismatic $3$-circuit $\gamma.$  Then at least $2$ of the edges
traversed by $\gamma$ will have labels strictly less than $\pi/2$.  By
proposition 2.41 of \cite{CHK}, these edges must actually be fibers of the
Seifert fibration.  The 2-suborbifold $C$ bounded by $\gamma$ is
incompressible, so must be either horizontal or vertical (See for example
Chapter 2 of \cite{hatch3}).  Since $C$ is transverse to the fibers
corresponding to the edges, $C$ must actually be horizontal.  This implies
that the third edge traversed by $\gamma$ is also a fiber.  Each face of $P$
containing a vertical fiber is covered by an incompressible $2$-suborbifold
in $\QQ_P$ that is vertical with respect to the Seifert fibration.  Each
vertical face is foliated by fibers, so must actually be a quadrilateral face
with the top and bottom edges labeled $\pi/2$.  It follows that $P$ must
actually be a triangular prism with top and bottom edges labeled $\pi/2$.  

If $P$ contains no spherical or Euclidean prismatic $3$ circuits and is not a
triangular prism, then $P$
must contain at least one Euclidean prismatic $4$-circuit in order to violate
Andreev's theorem. Each prismatic $4$-circuit may be realized as a
topological rectangle embedded in $P$.  Let $R$ be the collection of all such
rectangles, up to isotopy.  The rectangles in $R$ may be isotoped so that
pairwise they intersect transversely in arcs.  Let $\MN(R)$ be the union of a
closed regular neighborhood of the collection $R$ with any region of $P
\split R$ that intersects no edges of $P$.   Define $\partial \MN(R)$ to be the
$2$--suborbifold of $\MN(R)$ that is covered by the orbifold boundary of the
double cover of $\MN(R)$ in $\QQ_P$.

Suppose first that $r_1$, $r_2$, and $r_3$ are three rectangles in $R$ such
that $r_1 \cap r_2 \cap r_3$ is non-empty and such that no isotopy of $r_1$,
$r_2$, or $r_3$ leaves the intersection empty.  The rectangles then may be
further isotoped so that $r_1 \cap r_2 \cap r_3$ is a single point.  The
boundary of $\MN(r_1 \cup r_2 \cup r_3)$, viewed as a suborbifold, is the
disjoint union of eight right-angled triangles.  Because $\QQ_{P}$ is
irreducible, each of these triangles must actually bound orbifold balls in
$\QQ_{P}$, which implies that $R = r_1 \cup r_2 \cup r_3$ and that $\QQ_{P}$
is a Euclidean rectangular prism doubled along its boundary.

Now suppose that there are no triple points in $R$.  If $R$ contains $k$
rectangles, then the boundary of $\MN(R)$ consists of $2k$ rectangles.  The
remainder of the proof consists of proving that each complementary region of
$\MN(R)$ in $P$ has the combinatorics of a triangular prism with vertical
rectangular faces, as asserted by the conclusion of the proposition.  

Let $C$ be a component of $P \setminus \MN(R).$  We may think of
$C$ as a polyhedron with a rectangular face coming from $\partial{\MN(R)}$.
If $C$ has $5$ faces, then $C$ is a triangular prism and is either oriented
as desired, or rotated by a quarter turn.  If $C$ is the latter, this leads
to a contradiction for then $\Sigma(\QQ_{P})$ would contain a spherical prismatic
$3$-circuit.  The polyhedron $C$ is not hyperbolic, for this would
contradict the assumption that $\QQ_P$ is Seifert fibered.

However, $C$ is not a tetrahedron, contains no prismatic $3$-circuits, and
contains no prismatic $4$-circuits.  Hence for $C$ to violate Andreev's
theorem, it must actually be a prism with the edges of the triangular faces
labeled $\pi/2$.  This completes the proof.

\end{proof}

The following lemma indicates how to recognize prisms in terms of prismatic
$4$-circuits and their neighborhoods.

\begin{lemma}
If every vertex of $P^*$ is contained in $N_1(\gamma)$ for some $\gamma$,
then $P^*$ is dual to a prism.
\end{lemma}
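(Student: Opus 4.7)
The plan is to show that $P^*$ must be an $n$-gonal bipyramid, which is exactly the dual of an $n$-prism. Let $\gamma = v_1 v_2 v_3 v_4$ be the prismatic $4$-circuit supplied by the hypothesis, and call the vertices of $P^*$ other than $v_1$ and $v_3$ the \emph{equator vertices}. The hypothesis says that every equator vertex $u$ appears as $v_2'$ or $v_4'$ in some circuit $v_1 v_2' v_3 v_4' \in N_1(\gamma)$; since such a circuit is a simple length-$4$ edge cycle in $P^*$, the edges $v_1 u$ and $u v_3$ must both lie in $P^*$. Hence every equator vertex is adjacent in $P^*$ to both $v_1$ and $v_3$.

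Next I would rule out the possibility that $v_1 v_3$ is an edge of $P^*$. If it were, then because $P^*$ is a triangulation of $S^2$ this edge would bound two triangular faces $v_1 v_3 a$ and $v_1 v_3 b$ for equator vertices $a,b$. For $a$ to lie in $\supp(N_1(\gamma))$, there would have to be a prismatic $4$-circuit of the form $v_1 a v_3 a'$; but its edges $v_1 a$ and $a v_3$ would then both lie in the face $v_1 a v_3$, violating the prismatic condition. The same obstruction rules out $b$, contradicting the hypothesis. So $v_1$ and $v_3$ are non-adjacent in $P^*$.

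Finally I would close with an Euler-characteristic count to pin down the combinatorics. Letting $n$ be the number of equator vertices, $P^*$ has $V = n+2$ vertices and $E = 3V-6 = 3n$ edges; the $2n$ edges from $\{v_1,v_3\}$ to the equator leave exactly $n$ edges among equator vertices. Since $P^*$ is a triangulation of $S^2$, the link of $v_1$ is a simple cycle of length $n$ passing through all equator vertices, contributing $n$ equator-to-equator edges, and the same is true for the link of $v_3$. With only $n$ such edges available, the two link cycles must share their entire edge set, and since a cycle is determined by its edge set they must coincide. This exhibits $P^*$ with vertex set $\{v_1,v_3,u_1,\ldots,u_n\}$ and edges $v_1 u_i$, $v_3 u_i$, and $u_i u_{i+1}$ (cyclically), i.e., as the $n$-gonal bipyramid, dual to the $n$-prism.

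The main subtlety I anticipate is the final coincidence of the two link cycles: it depends both on the tight edge count and on the fact, established in the first step, that every equator vertex is a common neighbor of $v_1$ and $v_3$, so that each link really does cycle through every equator vertex rather than skipping some in favor of an interior equator-to-equator edge.
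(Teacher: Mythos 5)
Your proof is correct and supplies the combinatorial detail that the paper's one-line argument (``$P^*$ consists of the vertices and edges of $N_1(\gamma)$, along with the additional cycle of edges shown in Figure~\ref{drumdual}'') merely asserts by inspection of the figure. The skeleton is the same --- identify the two poles $v_1,v_3$, show the remaining vertices all lie on an equatorial cycle, and pin down that nothing else can appear --- but you actually carry out the verification: the degree count $E=3V-6$ for a simple triangulation of $S^2$, the observation that the $2n$ pole--equator edges leave exactly $n$ equator--equator edges, and the forced coincidence of the two link cycles. This closes a real gap in the paper's exposition, namely ruling out stray chords ($v_1v_3$ or extra equator--equator edges) that the figure silently excludes.

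One small presentational point: your step establishing that every equator vertex is adjacent to both $v_1$ and $v_3$ tacitly assumes that $v_1$ and $v_3$ sit as \emph{opposite} vertices in every circuit of $N_1(\gamma)$. That is forced once you know $v_1v_3$ is not an edge, which is exactly what your second step proves --- and your second step is self-contained (the face $v_1v_3a$ kills any prismatic $4$-circuit through $v_1$, $v_3$ and $a$, whether $v_1,v_3$ sit opposite or adjacent on it). So there is no circularity, but it would read more cleanly to rule out the edge $v_1v_3$ first, and only then deduce that each remaining vertex is a common neighbor of the poles.
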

\begin{proof}
If the assumption is satisfied, then $P^*$ consists of the vertices and edges
of $N_1(\gamma)$, along with the additional cycle of edges shown in
Figure~\ref{drumdual}.
\end{proof}

\begin{figure}
\labellist
\small\hair 2pt
\pinlabel $\dots$ at 96 46.5
\endlabellist
\begin{center}
\scalebox{.8}{\includegraphics{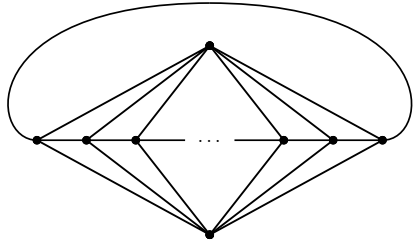}}
\end{center}
\caption{The dual graph to a prism}
\label{drumdual}
\end{figure}

\subsection{The algorithm}

If $P^*$ is the dual of an abstract polyhedron for $1 \leq i \leq k$,
define the \textbf{prismatic complexity} to be the $\NN$-valued function $c$
that assigns to $P^*$ the cardinality of the set
$$\mathcal{K}(P^*) = \{\delta \, \mid \, \delta \in \partial
N(\gamma), \text{ for some Euclidean prismatic } 4 \text{-circuit }
\gamma\}.$$
If $P_1^*, \dots,\,P_k^*$ are disjoint, extend $c$ by $c(\amalg_i P_i^*) =
\sum_i
c(P_i^*)$.

We may assume that $P^*$ contains no spherical prismatic
$3$-circuits by Theorem~\ref{kmppolyorb}.  We also may assume that all
Euclidean prismatic $3$--circuits are trivial by splitting $P^*$ along all
such $3$--circuits.

The decomposition algorithm goes as follows:

\begin{enumerate}
\item Set $P^*_0 = P^*$.  

\item While $c(P_k^*) > 0$,
\begin{enumerate}
\item If $P^*_k$ contains a nontrivial Euclidean prismatic $4$-circuit $\gamma$:  
\begin{enumerate}
\item If every vertex of the component $Q$ of $P^*_k$ containing $\gamma$
is contained in one of $N_1(\gamma)$ or $N_2(\gamma):$ 
\begin{enumerate}
\item Set
$P^*_{k+1} = P^*_k \setminus Q$ and record $Q$ in $\MC_{SF}$. 
\end{enumerate}
\item Else, set $P^*_{k+1} = P^*_k \split \MD$ where $\MD$ is a maximal
admissible subset of $\partial N_1(\gamma) \cup \partial N_2(\gamma)$.    
\end{enumerate}
\end{enumerate}
\begin{enumerate}
\item Else, if $P_k^*$ contains no nontrivial Euclidean prismatic
$4$-circuits: 
\begin{enumerate}
\item If a component $Q$ of $P_k^*$ contains a trivial Euclidean
prismatic $4$-circuit $\gamma$ and all vertices of $Q$ are contained 
$\supp(N_1(\gamma))$ or $\supp(N_2(\gamma))$: 
\begin{enumerate}
\item set $P^*_{k+1} = P_k^* \setminus Q$ and record $Q$ in $\MC_{SF}$.  
\end{enumerate}
\item Else, if $P_k^*$ contains no
nontrivial Euclidean prismatic $4$-circuits and each component of $P^*_k$ contains a
vertex not contained in the support of a neighborhood of a trivial Euclidean prismatic
$4$-circuit: 
\begin{enumerate}
\item set $\MC_{AT}$ equal to the disjoint union of the components of
$P^*_k$.  
\end{enumerate}
\end{enumerate}
\end{enumerate}
\item Return $\MC_{SF}$ and $\MC_{AT}$
\end{enumerate}

\begin{lemma}
	If $P^*$ is an abstract polyhedron containing no prismatic $3$--circuits
	and a nontrivial Euclidean prismatic $4$-circuit $\delta \in \partial
	N_i(\gamma)$ for some $\gamma$ and $i=1$ or $2$, then $c(P^*)> c(P^* \split \delta)$.
\end{lemma}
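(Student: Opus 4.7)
The plan is to exhibit an injection $\phi \colon \mathcal{K}(P^* \split \delta) \hookrightarrow \mathcal{K}(P^*) \setminus \{\delta\}$, which gives the strict inequality $c(P^*) > c(P^* \split \delta)$ because $\delta \in \mathcal{K}(P^*)$ by the hypothesis $\delta \in \partial N_i(\gamma)$.

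First, I would verify that $\delta$ itself is no longer a prismatic $4$-circuit in $P^* \split \delta$. In each of the two components of the split, the four edges of $\delta$ all bound the new triangular faces incident to the coned-off vertex, so any two consecutive edges of $\delta$ lie on such a common triangle. Consequently $\delta \notin \mathcal{K}(P^* \split \delta)$, whereas $\delta \in \mathcal{K}(P^*)$ is given.

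Next I would classify Euclidean prismatic $4$-circuits $\eta$ in $P^* \split \delta$ by how many newly introduced cone edges they contain. Prismaticity forbids two consecutive cone edges at a single cone vertex (they share a triangular face), so the only possibilities are that $\eta$ uses no cone edges, or exactly two cone edges at a single cone vertex corresponding to opposite vertices of $\delta$. In the first case, $\eta$ lives on one side of $\delta$ in $P^*$ with identical labels, so is already a Euclidean prismatic $4$-circuit of $P^*$, and I set $\phi(\eta) = \eta$. In the second case, $\eta = f_0 - f_i - x - f_j - f_0$ with $f_i,f_j$ opposite on $\delta$ and with the labels on $f_i x, x f_j$ summing to $\pi$; I define $\phi(\eta)$ to be the $4$-circuit of $P^*$ obtained by replacing the two cone edges with a two-edge arc of $\delta$ from $f_i$ to $f_j$, chosen (using the Euclidean condition on $\delta$ and the non-obtuse labeling) so that the resulting label sum is $2\pi$. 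Injectivity of $\phi$ is immediate in the first case, and in the second case follows by reading off $x$, the pair $\{f_i,f_j\}$, and the side of $\delta$ containing $x$ directly from the image.

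The main obstacle is verifying that each $\phi(\eta)$ actually lies in $\mathcal{K}(P^*)$, i.e., in $\partial N(\gamma')$ for some Euclidean prismatic $4$-circuit $\gamma'$ of $P^*$, rather than merely being a Euclidean prismatic $4$-circuit of $P^*$. I would handle this by exploiting the hypothesis $\delta \in \partial N_i(\gamma)$, which forces one side of $\delta$ in $P^*$ to contain essentially no vertices of $\supp(N_i(\gamma))$ beyond those of $\delta$ itself. Consequently, the component structure of $P^* \split \delta \split \eta$ used to certify $\eta \in \partial N(\gamma')$ pulls back to the component structure of $P^* \split \phi(\eta)$, and a case analysis on the two clauses defining $\partial N$ (the ``empty exterior'' clause and the ``fan over at least five triangles'' clause) transfers the certificate from the split back to the original graph. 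This bookkeeping, especially in the second case where $\phi(\eta)$ differs from $\eta$, is the most delicate part of the argument.
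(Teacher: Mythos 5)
Your second paragraph contains a concrete error. You claim that after coning off the vertices of $\delta$ to a new vertex $w$, ``any two consecutive edges of $\delta$ lie on such a common triangle,'' and conclude $\delta \notin \mathcal{K}(P^* \split \delta)$. This is false. The fan of new faces introduced by the cone consists of the four triangles $\{w,v_1,v_2\}$, $\{w,v_2,v_3\}$, $\{w,v_3,v_4\}$, $\{w,v_4,v_1\}$; consecutive edges $v_1v_2$ and $v_2v_3$ of $\delta$ therefore lie in \emph{different} fan triangles (they share only the vertex $v_2$, not a face), and the same is true on the non-coned side since $\delta$ was prismatic in $P^*$. Hence $\delta$ survives as a Euclidean prismatic $4$-circuit in each of $\overline{P^*_{\text{int}}}$ and $\overline{P^*_{\text{ext}}}$ --- it merely becomes \emph{trivial}, since one region of its complement now contains only the cone vertex. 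Triviality alone does not obviously exclude $\delta$ from $\mathcal{K}$ as the paper defines it, so the strictness of your inequality, which you derive from $\delta \notin \mathcal{K}(P^* \split \delta)$, is not established.

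There is also a gap in the second case of your map $\phi$. After replacing the two cone edges $wf_i,\,wf_j$ of $\eta$ by a two-edge arc of $\delta$, you need the resulting $4$-circuit $\phi(\eta)$ to be \emph{prismatic} in $P^*$, not merely Euclidean. But an edge $f_i x$ of $\eta$ and the adjacent $\delta$-edge $f_i v$ of the chosen arc could lie in a common triangle $\{f_i, x, v\}$ of $P^*$; you give no reason why this cannot happen, and the Euclidean/non-obtuse conditions you invoke only control the angle sum, not the face incidences. Finally, as you acknowledge, the verification that $\phi(\eta) \in \mathcal{K}(P^*)$ (membership in some $\partial N$, not just being a Euclidean prismatic $4$-circuit) is the real content and is only sketched. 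This is in fact where the paper's proof concentrates its effort: rather than building an explicit injection, the paper fixes the component $Q^*$ of $P^* \split \delta$ witnessing $\delta \in \partial N_i(\gamma)$, and shows $\mathcal{K}(Q^*) \subseteq \mathcal{K}(P^*)$ by contradiction --- any $\varepsilon \in \mathcal{K}(Q^*) \setminus \mathcal{K}(P^*)$ is forced to be inadmissible with $\delta$, to pass through a distinguished interior vertex $v_5$ adjacent to all of $v_1,\dots,v_4$, and this produces a prismatic $3$-circuit with two edges labeled $2$, contradicting irreducibility. Your injection framework is a reasonable alternative, but as written it neither establishes the strictness nor the membership in $\mathcal{K}$, which are the two load-bearing parts of the lemma.
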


\begin{proof}

Suppose $\delta \in \K(P^*)$ is in $\partial N_i(\gamma)$ for some $\gamma$
and $i=1$ or $2$.  By definition, at least one component of $P^* \split
\delta$ contains either no vertices of $\supp( N_i(\gamma))$ or exactly one
vertex of $\supp( N_i(\gamma))$ that shares an edge with each vertex of
$\delta$ and is composed of at least $5$ triangles of $P^*$.  Let $Q^*$ be
such a component.  We will show that $\K(Q^*) \subseteq \K(P^*)$.  

If $\varepsilon \in \K(Q^*) \setminus \K(P^*)$, then $\varepsilon$ and
$\delta$ do not form an admissible pair of prismatic $4$--circuits in $Q^*$.
This implies that $Q^*$ must be a component of $P^* \split \delta$ that
contains at least one vertex, $v_5$.  By construction, we may choose $v_5$ to
share an edge with each vertex, $v_1$, $v_2$, $v_3$, and $v_4$ of $\delta.$
  Also, non--admissibility of
the pair $(\delta, \varepsilon)$ implies that $\varepsilon$ passes through
$v_5$.  The fact that $P^* \split \delta$ contains at least $5$ triangles of
$P^*$ implies that at least one of the triangles formed by $v_i$, $v_{i+1
(\mod 4)}$, and $v_5$ for $i \in \{1,2,3,4\}$ is a prismatic $3$--circuit.
This leads to a contradiction to irreducibility of $\QQ_{P}$ because at least
two edges of each of these triangles is labeled $2$.  This completes the
proof.

\end{proof}

\begin{corollary}
The algorithm terminates.
\end{corollary}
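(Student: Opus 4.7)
The plan is to use the prismatic complexity $c$ as a monovariant. Since $c$ takes values in $\NN$, any strictly decreasing sequence $c(P_0^*) > c(P_1^*) > \cdots$ is finite, so it suffices to show that each iteration of the while loop in step~2 either strictly decreases $c(P_k^*)$ or triggers the terminating branch 2(b)(ii).

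I would first handle the two component-removal branches, 2(a)(i)(A) and 2(b)(i)(A). In each such case the removed component $Q$ of $P_k^*$ contains by hypothesis a Euclidean prismatic $4$-circuit $\gamma$, and hence the non-empty set $\partial N_1(\gamma) \cup \partial N_2(\gamma)$ contributes to $\K(P_k^*)$ and is supported entirely in $Q$. Once $Q$ is discarded, these circuits disappear from $\K(P_{k+1}^*)$, while the elements of $\K$ contributed by the other components are unaffected (since splitting and deletion were done componentwise), so $c(P_{k+1}^*) < c(P_k^*)$.

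Next I would handle the splitting branch 2(a)(ii). For a single $\delta \in \partial N_i(\gamma)$ the preceding lemma gives $c(P_k^*) > c(P_k^* \split \delta)$. For a maximal admissible $\MD \subseteq \partial N_1(\gamma) \cup \partial N_2(\gamma)$, the admissibility condition ensures that the elements of $\MD$ can be processed sequentially: each successive $\delta \in \MD$ remains a nontrivial Euclidean prismatic $4$-circuit in a single component of the partially-split graph, and no new prismatic $3$-circuit is created, since the only edges introduced by splitting are labeled $\pi/2$ and the preliminary reductions have already eliminated all spherical and nontrivial Euclidean prismatic $3$-circuits via Theorem~\ref{kmppolyorb}. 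Iterating the preceding lemma along the (non-empty) set $\MD$ therefore yields $c(P_{k+1}^*) < c(P_k^*)$.

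The only remaining branch is 2(b)(ii), which explicitly terminates the loop by assigning $\MC_{AT}$. Combining these cases, every iteration of the while loop strictly decreases the $\NN$-valued quantity $c(P_k^*)$ unless it exits via 2(b)(ii), so the algorithm halts after at most $c(P_0^*) + 1$ iterations. The main obstacle I anticipate is the bookkeeping required to verify that the hypotheses of the preceding lemma persist along a sequence of admissible splits -- specifically, that admissibility of $\MD$ with respect to $P_k^*$ implies admissibility of the tail of $\MD$ with respect to each intermediate graph, and that nontriviality of the remaining $\delta \in \MD$ is preserved. Both follow from tracking where each $\delta$ sits inside the planar embedding of the currently processed component, together with the labeling convention for newly introduced edges.
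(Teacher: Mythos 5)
Your proposal takes exactly the same approach as the paper's proof, which is a one-line observation that the $\NN$-valued prismatic complexity $c$ strictly decreases at each iteration, relying on the preceding lemma for the splitting branch. You have simply spelled out the case analysis over the algorithm's branches that the paper leaves implicit.
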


\begin{proof}
At each stage of the algorithm, the $\NN$-valued function $c$ decreases.  
\end{proof}

The following lemma follows trivially from the construction.

\begin{lemma}
Every Euclidean prismatic $4$--circuit in every component of $\MC_{AT}$ is
trivial.
\end{lemma}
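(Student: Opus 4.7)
The proof is essentially by inspection of the algorithm, and my plan is simply to make this explicit. I would begin by noting that the only step at which any graph is appended to $\MC_{AT}$ is step (2)(b)(ii). The precondition for entering that step, as written in the algorithm, is that the current graph $P^*_k$ contains no nontrivial Euclidean prismatic $4$-circuits (the antecedent of both the outer ``Else'' and the inner ``Else'' in step (2)(b)). Therefore every Euclidean prismatic $4$-circuit that appears in the $P^*_k$ producing $\MC_{AT}$ is automatically trivial.

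The one small thing to verify is that triviality is not destroyed when we restrict attention from $P^*_k$ to one of its connected components. For this I would argue as follows: if $\gamma$ is a trivial Euclidean prismatic $4$-circuit in $P^*_k$, then by definition one component of $\RR^2 \setminus \gamma$ contains exactly one vertex $v$ of $P^*_k$. The vertex $v$ is joined by edges to vertices of $\gamma$, so $\{v\} \cup \gamma$ lies inside a single connected component $Q$ of $P^*_k$. Hence $\gamma$ is a prismatic $4$-circuit in $Q$ with the same single vertex on the relevant side, and so $\gamma$ remains trivial when viewed inside $Q$.

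Putting these two observations together, every Euclidean prismatic $4$-circuit appearing in a component of $\MC_{AT}$ is trivial in that component, which is exactly the conclusion of the lemma. I do not anticipate any real obstacle: the substantive content is carried by the algorithm's design and by the earlier termination corollary, and this lemma is merely a bookkeeping statement that isolates the defining property of the $\MC_{AT}$ output, to be invoked in subsequent sections when the ``atoroidal'' components are analyzed.
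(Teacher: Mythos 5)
Your proof is correct and matches the paper's (unstated) reasoning: the paper simply remarks that the lemma ``follows trivially from the construction,'' and your argument makes this explicit by reading off the precondition of the step that assigns $\MC_{AT}$ and noting that the Euclidean and trivial properties of a prismatic $4$-circuit are unchanged when passing from $P^*_k$ to the connected component containing it.
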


The following proposition proves the final claim in Theorem~\ref{bspolyorb}
that the atoroidal components of the decomposition are canonical.  It is not
the case that $\MC_{SF}$ is independent of the choices made.

\begin{proposition}
The set $\MC_{AT}$ is independent of the choices made in the
algorithm.
\end{proposition}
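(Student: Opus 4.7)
The plan is to show that although the algorithm admits choices --- namely the order in which Euclidean prismatic $4$-circuits $\gamma$ are processed in step (2a) and the selection of a maximal admissible subset $\MD \subset \partial N_1(\gamma) \cup \partial N_2(\gamma)$ in step (2a-ii) --- the collection $\MC_{AT}$ of atoroidal components is well-defined up to isomorphism of labeled abstract polyhedra. The strategy is to pass to the orbifold $\QQ_P$, reformulate each algorithm run as a choice of incompressible Euclidean $2$-suborbifolds, and then appeal to the Bonahon--Siebenmann theorem \cite{bonahonsiebenmann} at the orbifold level to conclude canonicity of the atoroidal pieces.

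First, I would verify that each Euclidean prismatic $4$-circuit $\delta$ chosen by the algorithm corresponds to an incompressible Euclidean $2$-suborbifold $\RRR(\delta) \subset \QQ_P$. Incompressibility uses orbifold-irreducibility, which we may assume by first applying Theorem~\ref{kmppolyorb}, together with the absence of spherical and Euclidean prismatic $3$-circuits at this stage of the algorithm. The admissibility condition on $\MD$ translates to the cuts $\{\RRR(\delta) : \delta \in \MD\}$ being pairwise disjoint in $\QQ_P$ up to isotopy. By Theorem~\ref{sfpoly}, the region of $\QQ_P$ cobounded by parallel prismatic $4$-circuits inside $N_i(\gamma)$ is a prism with horizontal labels $\pi/2$, and so realizes a Seifert-fibered piece. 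Consequently the cuts in $\partial N_i(\gamma)$ lie at the outer boundary of a maximal Seifert-fibered neighborhood of $\RRR(\gamma)$.

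Second, I would argue that any two choices of maximal admissible subset $\MD, \MD' \subset \partial N_1(\gamma) \cup \partial N_2(\gamma)$ produce orbifold splittings that are isotopic: the cuts in $\MD$ and $\MD'$ cobound Seifert-fibered prism regions, so one collection is obtained from the other by sliding through the Seifert structure of these regions. The same argument handles different orderings of $\gamma$, since each iteration of the loop removes a Seifert-fibered piece determined up to isotopy by the maximal Seifert-fibered neighborhood of $\RRR(\gamma)$. Hence every run of the algorithm realizes the canonical Bonahon--Siebenmann decomposition of $\QQ_P$ \cite{bonahonsiebenmann}. Since an atoroidal component's realization in $\QQ_P$ determines its combinatorial type as a labeled abstract polyhedron (via its dual graph and the induced $\pi/2$ labels on new edges), the set $\MC_{AT}$ is uniquely determined.

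The main obstacle is the combinatorial verification that two maximal admissible subsets of $\partial N_1(\gamma) \cup \partial N_2(\gamma)$ cobound only Seifert-fibered prism pieces and never enclose a fragment that properly belongs to $\MC_{AT}$. This requires showing, using the explicit definition of $\partial N_i(\gamma)$ and the admissibility condition, that every Euclidean prismatic $4$-circuit lying in $N_i(\gamma) \setminus \partial N_i(\gamma)$ is parallel, through a prism slab, to a circuit in $\MD \cup \MD'$. A secondary subtlety is to rule out interaction between the neighborhoods $N_i(\gamma)$ associated to distinct base circuits $\gamma$ processed at different iterations; here one uses that if two such neighborhoods overlap, then their union is itself contained in a single Seifert-fibered component by Theorem~\ref{sfpoly}, so the outcome of one merging order agrees with the other.
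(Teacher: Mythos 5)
Your proposal takes a genuinely different route from the paper's, and within the paper's own framework it is circular. Section~\ref{algbs} is explicitly aiming to give a \emph{constructive, independent} proof of the Bonahon--Siebenmann splitting theorem for polyhedral orbifolds; the proposition you are asked to prove is precisely the step that establishes the canonicity claim of Theorem~\ref{bspolyorb}. By passing to $\QQ_P$ and invoking the abstract Bonahon--Siebenmann theorem as a black box to get that ``the atoroidal pieces are canonical,'' you are assuming the very statement that the algorithm and this proposition are supposed to re-derive. Read externally, using the general orbifold theorem is a legitimate (if heavier) way to reach the conclusion, but it replaces the paper's elementary combinatorial argument with an appeal to a deep structure theorem, and it is not an independent proof.

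More substantively, the key step in your route --- that each run of the algorithm outputs the canonical Bonahon--Siebenmann system $\TT$ --- is left unproved, and you explicitly acknowledge this as ``the main obstacle.'' That step is not a routine verification; it \emph{is} the content of the proposition. The paper supplies exactly this by a direct combinatorial analysis. For an admissible pair $\delta \neq \delta'$, splitting commutes, $(P^* \split \delta) \split \delta' = (P^* \split \delta') \split \delta$, so order is irrelevant. For an inadmissible pair $\delta, \delta' \in \partial N_1(\gamma)$, the definitions force a tightly constrained planar configuration (each bounds a region containing exactly one vertex of $N_1(\gamma)$, the two extra vertices $u,w$ are joined by an edge, etc.), and Figure~\ref{ambiguity} then shows that splitting first along $\delta$ and then along the induced boundary circuits yields the same atoroidal components as splitting first along $\delta'$, \emph{even though the Seifert-fibered components differ}. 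Your assertion that maximal admissible subsets $\MD$ and $\MD'$ ``cobound Seifert-fibered prism regions, so one collection is obtained from the other by sliding through the Seifert structure'' is the right heuristic, but it is exactly what needs to be verified and cannot be assumed. Similarly, your claim that overlapping neighborhoods $N_i(\gamma)$, $N_j(\gamma')$ are jointly contained in a single Seifert-fibered component is plausible via Theorem~\ref{sfpoly} but requires an argument; as written, that inference could fail if the union is merely a collection of prisms glued along annular pieces rather than a single prism. As it stands, your proposal has the correct geometric intuition but leaves the actual proof unwritten.
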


\begin{proof}

If $\delta \neq \delta'$ are an admissible pair, then it is clear that
splitting along $\delta$ and splitting along $\delta'$ are commuting
operations $$(P^* \split \delta) \split \delta' = (P^* \split \delta') \split
\delta.$$

If two prismatic $4$-circuits $\delta$ and $\delta'$ in $\partial
N_1(\gamma)$ for some $\gamma$ are inadmissible as a pair then they each must
bound a region of the plane that contains exactly one vertex of
$N_1(\gamma)$.  If necessary, choose a new embedding of $P^*$ into the plane
so that both of these regions are bounded.  Then, the configuration of
$\delta$ and $\delta'$ must be as in the topmost diagram in
Figure~\ref{ambiguity}.  With labels as in the figure, if $\delta$ and
$\delta'$ form an inadmissible pair, then $u$ and $w$ must be joined by the
edge and the embedding may be chosen so that region bounded by the
$4$-circuit passing through $u$, $v_1$, $v_3$ and $w$ must contain at least
one vertex that is in $P^* \setminus N_1(\gamma)$.  The other two bounded
regions contain no vertices of $N_1(\gamma)$ but can be otherwise arbitrarily
chosen.

\begin{figure}
\labellist
\small\hair 2pt
\pinlabel $\approx$ at 207 25
\pinlabel $\amalg$ at 133 196 
\pinlabel $\amalg$ at 299 196 
\pinlabel $\amalg$ at 66 111 
\pinlabel $\amalg$ at 137 111 
\pinlabel $\amalg$ at 288 111 
\pinlabel $\amalg$ at 360 111 
\pinlabel $\amalg$ at 66 25 
\pinlabel $\amalg$ at 133 25
\pinlabel $\amalg$ at 293 25
\pinlabel $\amalg$ at 358 25
\pinlabel $\delta$ [br] at  190 306
\pinlabel $\delta'$ [bl] at 226 306
\pinlabel $\split \delta$ [br] at 156 250 
\pinlabel $\split \delta'$ [bl] at 263 250 
\pinlabel $u$ [t] at 194 283
\pinlabel $w$ [t] at 223 283
\pinlabel $v_1$ [b] at 209 329
\pinlabel $v_3$ [t] at 209 244
\endlabellist
\begin{center}
\scalebox{.83}{\includegraphics{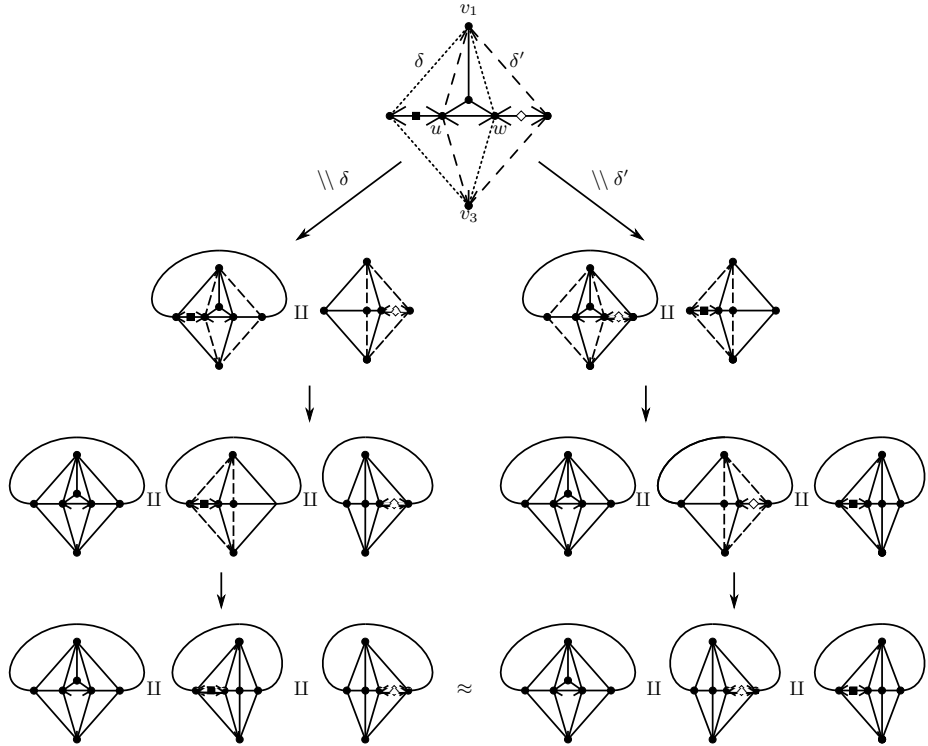}}
\end{center}
\caption{In the topmost figure, the short--dashed circuit is $\delta$ and the long--dashed circuit
is $\delta'$.  After the first splitting, the algorithm says to split
successively along the dashed curves in the second and third rows.  The
bottommost diagrams are graph isomorphic.   The Seifert fiber components that
are produced are not shown in
this figure}
\label{ambiguity}
\end{figure}

The remainder of Figure~\ref{ambiguity} shows that choice of splitting first
along $\delta$ yields the same atoroidal components as first splitting along
$\delta'$.  The reader should note that the Seifert fiber components
produced do not agree.
\end{proof}

\section{A decomposition for non--obtuse hyperbolic polyhedra}\label{C:decomp}

In this section, we show how to apply the decompositions of Petronio and
Bonahon-Siebenmann described in the previous section to decompose a
non-obtuse hyperbolic polyhedron into components that remain hyperbolic upon
being relabeled by $\pi/2$ and the complement of these components.  The
complementary pieces will generally be hyperbolic cone manifolds with
non--geodesic boundary.  These components will be discussed more thoroughly
in Section~\ref{C:prisms}.

Suppose that $\PP$ is a non--obtuse hyperbolic polyhedron that realizes a
labeled abstract polyhedron $(P,\Theta)$.  Let $\QQ_{\PP}$ be the cone
manifold obtained by doubling $\PP$ along its boundary.  In the case where
$\PP$ is a Coxeter polyhedron, $\QQ_{\PP}$ is an orbifold with fundamental
group equal to the index--$2$ orientation preserving subgroup of the
reflection group generated by $\PP$.  It will be useful to consider the
associated compact topological orbifold $\QQ_{\PP}^{\perp}$ that is obtained
from $\QQ_{\PP}$ by changing all cone angles to $\pi$ and capping off each of
the punctures in $S^3$ that correspond to degree $4$ ideal vertices with
pillowcases and each of the punctures that correspond to degree $3$ ideal
vertices with orbifold balls.  These pillowcases are then part of the
boundary of $\QQ_{\PP}^{\perp}$.  Equivalently one can consider the
topological closure of the non--compact orbifold.  This procedure is
analogous to passing from a finite volume hyperbolic manifold, $M,$ to a
compact topological manifold $\overline{M}$ by truncating the cusps or
forming the closure of $M$.

\subsection{Turnover decomposition}\label{S:turndecomp}

The results in this section are a constructive version of a theorem of Dunbar
that imply that a non--obtuse hyperbolic polyhedron may be decomposed along a
disjoint union of hyperbolic turnovers into components that contain no
nontrivial prismatic $3$-circuits \cite{dunbar}.  In the case of hyperbolic polyhedral
orbifolds, the collection of turnovers produced by Dunbar's theorem
corresponds to the collection of turnovers that pass through the same edges
of $\PP$ as the spherical turnovers produced by Theorem~\ref{kmppolyorb}
applied to $\QQ_{\PP}^{\perp}$.

We generalize our earlier definition of turnovers to allow for cone--manifold
type singularities.  That is, we define a \textbf{turnover} to be a
$2$--dimensional cone manifold obtained by doubling a triangle with angles
$\alpha,$ $\beta,$ and $\gamma$ along its boundary.    A turnover is
hyperbolic, Euclidean, or spherical if $\alpha + \beta + \gamma$ is less
than, equal to, or greater than $\pi$, respectively.  

For each of turnover $s_i$ in the collection $\S$ produced by
Theorem~\ref{kmppolyorb} applied to $\QQ_{\PP}^{\perp}$, there is an associated
turnover $t_i$  in $\QQ_{\PP}$ that intersects the same three edges of the
singular locus that $s_i$ intersects.  The rest of this section will show
that the collection of turnovers in $\S$ can be constructed directly from the
abstract polyhedron $P$ and that there is a geodesic representative in the
isotopy class of each $t_i$ associated to an $s_i \in \S$.

In the projective model of $\HH^3$, a geodesic plane is the
intersection of the open unit ball with an affine plane in $\RP^3$.  Suppose
that $v\in \RP^3$ is a point not contained in $\overline{\HH}^3$.  Consider
the set of affine lines that pass through $v$ and are tangent to
$\partial_{\infty}\HH^3$.  The intersection of this set of lines with the
boundary of $\HH^3$ is a circle.  The intersection of the plane containing
this circle with $\HH^3$ is the \textit{polar hyperplane of $v$.}  Any
hyperbolic geodesic that extends to a line passing though $v$ is orthogonal
to the polar hyperplane of $v$.

The following lemma says that to find embedded turnovers in $\QQ_{\PP}$, it
suffices to find prismatic $3$--circuits in $P$.  

\begin{lemma}\label{turn}
If $\gamma \subset P^*$ is a prismatic $3$--circuit, then there exists a
unique hyperbolic turnover $t$ embedded in $\QQ_{\PP}$ that meets the faces
of $\PP$ through which $\gamma$ passes orthogonally.
\end{lemma}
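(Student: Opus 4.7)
The plan is to construct $t$ explicitly using projective duality in $\RP^3$, then verify uniqueness, hyperbolicity, and embeddedness from the hypotheses on $\gamma$. First, I would let $F_1, F_2, F_3$ denote the three faces of $\PP$ corresponding to the three vertices of $P^*$ visited by $\gamma$, and let $e_{12}, e_{23}, e_{13}$ denote the three edges of $\PP$ through which $\gamma$ passes, with dihedral angles $\alpha_{12}, \alpha_{23}, \alpha_{13}$. The prismatic hypothesis says these three faces pairwise share an edge but have no common vertex. The key fact from the projective model recalled just above the lemma is that a geodesic plane $\Pi$ is orthogonal to a defining plane $F_i$ if and only if $\Pi$ contains the polar point $v_i$ of $F_i$ in $\RP^3$.

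Second, I would note that three defining planes $F_1,F_2,F_3$ of $\PP$ whose duals correspond to a prismatic $3$-circuit have three polar points $v_1,v_2,v_3$ in $\RP^3$ that are in general position: if the $v_i$ were collinear, then the three $F_i$ would share a common line, forcing the prismatic hypothesis to fail (they would share a common vertex, finite or hyperideal, at the intersection of that line with the appropriate region). Hence $v_1, v_2, v_3$ span a unique affine plane $\Pi \subset \RP^3$, which is the only candidate for the geodesic support of $t$; this gives uniqueness.

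Third, I would argue that $\Pi$ actually meets $\HH^3$, i.e., it is a hyperbolic plane. The intersection $\Pi \cap F_i \cap F_j$ is a geodesic orthogonal to the edge $e_{ij}$ of $\PP$ along the face common to $F_i$ and $F_j$, and the three such geodesics form a geodesic triangle $T$ in $\Pi \cap \PP$ whose interior angles are exactly $\alpha_{12}, \alpha_{23}, \alpha_{13}$ (since $\Pi$ meets each $F_k$ orthogonally, so the angle of $T$ at $\Pi\cap e_{ij}$ equals the dihedral angle of $\PP$ along $e_{ij}$). By Andreev's theorem applied to the non-obtuse polyhedron $\PP$ (condition (4) of Theorem~\ref{and}), the prismatic $3$-circuit condition gives $\alpha_{12}+\alpha_{23}+\alpha_{13}<\pi$, so $T$ is a genuine hyperbolic triangle in $\HH^3$ and $\Pi$ genuinely crosses $\HH^3$.

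Fourth, I would verify that $T$ actually lies inside $\PP$, so that its double in $\QQ_{\PP}$ is the desired embedded turnover $t$. The three geodesics bounding $T$ lie on the faces $F_i$ and meet them orthogonally, so $T$ is bounded by arcs on $\partial \PP$; the interior of $T$ does not cross any other defining face, for if it did, the prismatic $3$-circuit $\gamma$ would not be the innermost one and one could find a face producing a prismatic $2$-circuit or a sub-triangle, contradicting either Steinitz $3$-connectedness or the choice of $\gamma$. Doubling $T$ across $\partial \PP$ inside $\QQ_{\PP}$ then produces an embedded hyperbolic turnover with cone angles $2\alpha_{12}, 2\alpha_{23}, 2\alpha_{13}$. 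I expect the main obstacle to be the last step, namely ruling out that $\Pi$ exits $\PP$ through some fourth face before closing up into the triangle $T$; the cleanest way to handle this is probably a convexity argument using that $\PP$ is the intersection of its defining half-spaces and that $\Pi$ already meets the three relevant half-space boundaries orthogonally, combined with the prismatic condition to prevent a stray face from cutting $T$.
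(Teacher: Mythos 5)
Your construction of the candidate plane $\Pi$ is projectively equivalent to the paper's: the paper takes the polar hyperplane of the single point $v = \Pi_1 \cap \Pi_2 \cap \Pi_3$, while you take the plane through the three polar points of $\Pi_1, \Pi_2, \Pi_3$, and these two planes coincide by duality (the polar of $v$ is precisely the plane containing all three polar points). Your steps 1 through 3 — uniqueness, orthogonality to the three faces, and hyperbolicity via the Andreev angle--sum inequality that the prismatic $3$--circuit condition supplies — are all sound and line up with the paper.

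The gap is in step 4, and it is a real one. The ``innermost circuit'' reasoning is not available: nothing in the statement assumes $\gamma$ is innermost, and a non--obtuse polyhedron with several prismatic $3$--circuits will generally have them nested, so ``$\gamma$ would not be the innermost one'' is not a contradiction of anything. Nor does a face crossing $T$ immediately yield a prismatic $2$--circuit violating Steinitz. What actually drives containment is the \emph{non--obtuse} hypothesis on $\PP$, which you never invoke. Because every dihedral angle of $\PP$ is at most $\pi/2$ and $\Pi_v$ meets $\Pi_1, \Pi_2, \Pi_3$ orthogonally, the region $\Pi_v \cap H_1 \cap H_2 \cap H_3$ (with $H_i$ the half--spaces cut out by $\Pi_i$) is automatically contained in $\PP$; no fourth defining plane can slice it. This is exactly the convexity argument you predict is needed, but its engine is non--obtuseness, not the prismatic hypothesis or $3$--connectedness — those have already done their job in step 3. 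Replace the ``innermost'' language with that observation and the proof closes.
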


\begin{proof}
We will work in the projective model of $\HH^3$.  Consider the three defining
planes $\Pi_1,$ $\Pi_2$, and $\Pi_3$ in which the faces of $\PP$
corresponding to the vertices of $\gamma$ lie.  By Andreev's theorem
$\Theta(e_{12})+\Theta(e_{23})+\Theta(e_{13})<\pi$ where $e_{ij}=\Pi_i \cap
\Pi_j,$ so $\Pi_1 \cap \Pi_2 \cap \Pi_3 = v$ is a point in $\RP^3 \setminus
\overline{\HH}^3$.  The polar hyperplane, $\Pi_v$, of $v$ is orthogonal to
$e_{12},$ $e_{23},$ and $e_{13}$, hence is orthogonal to $\Pi_1,$ $\Pi_2$,
and $\Pi_3$.  Since $\PP$ is non--obtuse, the intersection of $\Pi_v$ with the
three half spaces determined by the $\Pi_i$ that contain $\PP$ is actually
contained in $\PP$.  Therefore, the double of $\Pi_v \cap \PP$ is the
desired hyperbolic turnover, $t$.  
\end{proof}

\begin{lemma}\label{turndis}
If $\gamma_1 \neq \gamma_2$ are prismatic $3$--circuits, then
the associated turnovers $t_1$ and $t_2$ provided by Lemma~\ref{turn} are
disjoint.  
\end{lemma}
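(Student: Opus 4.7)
The plan is to work in the projective model of $\HH^3$ and show that the totally geodesic triangles $T_i := \Pi_{v_i} \cap \PP$, whose doubles across $\partial \PP$ give the turnovers $t_i$, are disjoint inside $\PP$; this is equivalent to $t_1 \cap t_2 = \emptyset$. Because $\gamma_1 \neq \gamma_2$ forces $T_1 \neq T_2$ and hence $\Pi_{v_1} \neq \Pi_{v_2}$, the intersection $\Pi_{v_1} \cap \Pi_{v_2}$ is either empty or a single geodesic line $\ell$ in $\HH^3$. If $T_1 \cap T_2 \neq \emptyset$, then $\ell \cap \PP$ is a non-degenerate segment (possibly a single tangential point) both of whose boundary points on $\partial \PP$ lie in $\Pi_{v_1} \cap \Pi_{v_2} \cap \partial \PP$. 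By convexity of $\PP$ and the identity $\Pi_{v_i} \cap \PP = T_i$, we have $\Pi_{v_i} \cap \partial \PP = \partial T_i$, so any such endpoint is in $\partial T_1 \cap \partial T_2$. It therefore suffices to show $\partial T_1 \cap \partial T_2 = \emptyset$ on the topological $2$-sphere $\partial \PP$.

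I would then case-split on $k := |\gamma_1 \cap \gamma_2|$, the number of faces of $\PP$ appearing in both circuits; since $\gamma_1 \neq \gamma_2$, we have $k \in \{0,1,2\}$. When $k=2$, write $\gamma_1 = (F,F',F_3)$ and $\gamma_2 = (F,F',F_4)$. Both $v_1$ and $v_2$ lie on the projective line extending the edge $F \cap F'$, so $\Pi_{v_1}$ and $\Pi_{v_2}$ are distinct hyperbolic planes orthogonal to the common geodesic $F \cap F'$; planes perpendicular to a common geodesic in $\HH^3$ at different points are disjoint, hence $T_1 \cap T_2 = \emptyset$. When $k=0$, each arc of $\partial T_i$ lies in the open interior of a face of $\gamma_i$, with endpoints on edges of $\PP$ joining two $\gamma_i$-faces; a coincidence of an endpoint of $\partial T_1$ with an endpoint of $\partial T_2$ on some edge $e$ of $\PP$ would force $e$ to be shared by two $\gamma_1$-faces and simultaneously by two $\gamma_2$-faces, contradicting $k=0$, while arc interiors in distinct face interiors cannot meet.

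The central case, which I expect to present the main obstacle, is $k=1$. Write the shared face as $F$ and $\gamma_1 = (F,F_2,F_3)$, $\gamma_2 = (F,F_4,F_5)$. The arcs of $\partial T_1$ and $\partial T_2$ on $F$ are respectively the unique common perpendiculars $L_1 \subset F$ to the pair of edges $(e_{F,F_2}, e_{F,F_3})$ and $L_2 \subset F$ to $(e_{F,F_4}, e_{F,F_5})$, perpendicularity following from $\Pi_{v_i} \perp F_j$ for every $F_j \in \gamma_i$. To show $L_1 \cap L_2 = \emptyset$, the key step is a planarity argument in the dual graph $P^*$: the triangle $\gamma_1$ is a Jordan curve in the planar graph $P^*$ separating $S^2$ into two regions, and since the edge $F_4 F_5$ of $\gamma_2$ does not cross $\gamma_1$, the vertices $F_4$ and $F_5$ lie on a common side of $\gamma_1$. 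Reading the cyclic order of the edges incident to the vertex $F$ in $P^*$, this forces $e_{F,F_4}$ and $e_{F,F_5}$ to lie in the same one of the two arcs into which $e_{F,F_2}$ and $e_{F,F_3}$ divide $\partial F$; consequently $L_1$ and $L_2$ are non-interleaving chords of the convex polygon $F$ and are disjoint. Arcs of $\partial T_1$ and $\partial T_2$ on the non-shared faces are disjoint by the same endpoint-on-edge argument as in the $k=0$ case, completing the reduction and the proof.
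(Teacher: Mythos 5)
Your argument is correct, but it proceeds by a genuinely different route from the paper. The paper's proof is short and relies on two facts about totally geodesic suborbifolds of the doubled polyhedron $\QQ_{\PP}$: if $t_1 \cap t_2$ were a single point it would lie on an edge of $\PP$, where both turnovers meet the $1$--skeleton orthogonally, forcing $t_1 = t_2$; and if $t_1 \cap t_2$ were $1$--dimensional it would be a closed geodesic lying in a hyperbolic turnover, which is impossible since hyperbolic turnovers contain no closed geodesics. You instead reduce to showing $\partial T_1 \cap \partial T_2 = \emptyset$ (via $T_1 \cap T_2 = \Pi_{v_1}\cap\Pi_{v_2}\cap\PP$, whose endpoints lie on $\partial T_1 \cap \partial T_2$) and then carry out an explicit case analysis on $k = |\gamma_1 \cap \gamma_2| \in \{0,1,2\}$: projective polarity and orthogonality to a common geodesic for $k=2$, disjointness of faces for $k=0$, and a planarity argument on the dual graph $P^*$ (non-interleaving of $\{FF_4,FF_5\}$ with $\{FF_2,FF_3\}$ in the cyclic order at $F$) combined with non-interleaving chords of a convex polygon for $k=1$. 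Your approach trades the paper's appeal to the rigidity of hyperbolic turnovers for elementary combinatorics and $2$--dimensional hyperbolic geometry, making explicit how the planar structure of $P^*$ prevents crossing; the paper's proof is slicker and shorter but silently invokes a known structural fact about turnovers. Both are valid, and yours has the pedagogical virtue of exposing the role of the dual-graph combinatorics.
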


\begin{proof}
Suppose for contradiction that $t_1 \cap t_2 \neq \emptyset.$  If $t_1
\cap t_2$ is a single point, $p$, then $p$ must lie in an edge of the
polyhedron $\PP$.  By the previous lemma, both $t_1$ and $t_2$ intersect the
$1$--skeleton of the polyhedron orthogonally.  Hence the two turnovers actually
coincide.  The only other possibility is that $t_1 \cap t_2$ is
$1$--dimensional.  Both turnovers are geodesic, so in $\QQ_{\PP}$ the
intersection is a closed geodesic.  This is a contradiction as hyperbolic
turnovers contain no closed geodesics.  
\end{proof}

A polyhedron is said to be \textit{turnover reduced} if every prismatic
$3$-circuit is trivial.  If $\PP$ is a turnover reduced hyperbolic
polyhedron, then $\QQ_{\PP}^{\perp}$ is orbifold irreducible.  The following is
a corollary of Theorem~\ref{kmppolyorb} and Lemmas \ref{turn} and
\ref{turndis}.

\begin{corollary}\label{turnredlemma}
For any non--obtuse hyperbolic polyhedron $\PP$, there exists a finite
collection $\S$ of disjoint, embedded, nonparallel turnovers  such that
the closure of each component of $\PP \setminus \S$ is a turnover
reduced non--obtuse hyperbolic polyhedron.  
\end{corollary}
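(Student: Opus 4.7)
The plan is to package Petronio's Theorem~\ref{kmppolyorb}, applied to the associated right-angled orbifold $\QQ_{\PP}^{\perp}$, together with Lemmas~\ref{turn} and~\ref{turndis}. Since $\QQ_{\PP}^{\perp}$ arises from the $1$-skeleton $P$ of $\PP$ by relabeling every edge by $\pi/2$, it is an abstract Coxeter polyhedron in which every prismatic $3$-circuit has label sum $3\pi/2>\pi$. Theorem~\ref{kmppolyorb} therefore supplies a canonical, hence finite, collection of spherical turnovers of $\QQ_{\PP}^{\perp}$; these correspond bijectively to the nontrivial prismatic $3$-circuits $\gamma_1,\dots,\gamma_m$ of $P$.

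I would then invoke Lemma~\ref{turn} for each $\gamma_i$ to produce the unique hyperbolic turnover $t_i\subset \QQ_{\PP}$ meeting orthogonally the three faces of $\PP$ through which $\gamma_i$ passes; the hypothesis $\sum_{e\in \gamma_i}\Theta(e)<\pi$ is precisely Andreev's condition~(4) applied to the non-obtuse polyhedron $\PP$. Set $\S=\{t_1,\dots,t_m\}$. Lemma~\ref{turndis} yields pairwise disjointness, and the non-parallel condition follows from the uniqueness clause of Lemma~\ref{turn}, since distinct $\gamma_i$ cut different triples of faces of $\PP$. Each $t_i$ meets $\PP$ in a totally geodesic triangle orthogonal to the three adjacent faces, so when $\PP$ is split along $\bigcup_i(t_i\cap \PP)$ each component inherits the original dihedral angles on pre-existing edges and carries dihedral angle $\pi/2$ on each of the three newly introduced edges; in particular each component is a non-obtuse hyperbolic polyhedron.

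The principal remaining task, which I expect to be the main obstacle, is to verify that every component $C$ is turnover reduced. Any prismatic $3$-circuit $\delta$ in $C$ either lies entirely within the original $1$-skeleton of $P$ or traverses at least one of the newly introduced edges. In the first case, $\delta$ would be a nontrivial prismatic $3$-circuit of $P$ distinct from every $\gamma_i$, contradicting the completeness of the enumeration provided by Theorem~\ref{kmppolyorb}. In the second case, a combinatorial analysis of how $\delta$ can cross a capping triangle shows that $\delta$ must either coincide with the boundary of that triangle (hence bound a trivial region) or induce a forbidden spherical prismatic $3$-circuit in $\QQ_{\PP}^{\perp}$; this boundary case analysis, together with the bookkeeping required to match $\delta$ against the canonical list $\{\gamma_i\}$, is the most delicate step of the argument.
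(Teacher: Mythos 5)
Your argument reproduces the package the paper itself uses to justify Corollary~\ref{turnredlemma}: Petronio's decomposition (Theorem~\ref{kmppolyorb}) applied to $\QQ_{\PP}^{\perp}$ to enumerate the circuits, Lemma~\ref{turn} for a geodesic turnover realization in $\PP$, and Lemma~\ref{turndis} for disjointness. The ``delicate'' final case you flag can be closed more directly by observing that no prismatic $3$--circuit of a component can pass \emph{through} an introduced triangular face, since any two edges of a triangle share a vertex and hence lie on a common face of the dual graph; consequently every prismatic $3$--circuit of a component either survives from $\PP$ (and nontrivial such circuits were already cut along) or encircles a capping triangle (and is therefore trivial), with no need to argue via $\QQ_{\PP}^{\perp}$ in that step.
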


\subsection{Atoroidal components of quadrilateral
decomposition}\label{atorcpts}

In this section we show that the components of $\PP$ that correspond to atoroidal
components of $\QQ_{\PP}^{\perp}$ coming from the decomposition of
Theorem~\ref{bspolyorb} have hyperbolic interiors.  We first explain more
explicitly the connection between the prismatic $4$--circuits produced by
Theorem~\ref{bspolyorb} and the $2$--suborbifolds along which
$\QQ_{\PP}^{\perp}$ is decomposed.

A incompressible suborbifold of the form $S^2(2,2,2,2)$ embedded in
$\QQ_{\PP}^{\perp}$ is a topological sphere embedded in
the base space of $\QQ_{\PP}^{\perp}$ that intersects the singular locus in four
edges that form a prismatic $4$--circuit.  The singular locus,
$\Sigma(\QQ_{\PP}^{\perp})$, is contained in a $2$--sphere, topologically
embedded in the base space.  We may assume that $S^2(2,2,2,2)$ has been
isotoped so that it intersects this $2$--sphere transversely.  The
$3$--orbifold $\QQ_{\PP}^{\perp}$ admits an order--two self--homeomorphism
that
fixes the $2$--sphere in which $\Sigma(\QQ_{\PP}^{\perp})$ is embedded and
swaps the complementary components.  Let $\PP^{\perp}$ be the quotient of
$\QQ_{\PP}^{\perp}$ by the action of this symmetry.  The quotient,
$\PP^{\perp}$, is a non--orientable $3$--orbifold with boundary consisting of
the quadrilaterals that come from the quotient of the bounding pillowcases
of $\QQ_{\PP}^{\perp}$ by the action.  The incompressible $S^2(2,2,2,2)$
suborbifolds descend to embedded quadrilaterals in $\PP^{\perp}$.
Theorem~\ref{bspolyorb} then leads to a decomposition of $\PP^{\perp}$ by
quadrilaterals into components that correspond to the atoroidal and
Seifert--fibered components of the double cover.  The decomposition of
$\PP^{\perp}$ by quadrilaterals leads to a decomposition of the original
hyperbolic polyhedron $\PP$ by quadrilaterals.    The boundary of a
component of the decomposition of $\PP^{\perp}$ is the union of the
decomposition quadrilaterals that it meets.

The following proposition says that the atoroidal components coming from the
quadrilateral decomposition admit hyperbolic structures.

\begin{proposition}\label{topdecomp}
Let $Q$ be a component of the decomposition of $\PP$ corresponding to an
atoroidal component of the decomposition of $\PP^{\perp}$.  Let $R$ be the
abstract polyhedron with a quadrilateral or triangular face for each
quadrilateral or triangular boundary component of $Q$.  Then $(R,\Theta)$ is
realizable as a hyperbolic polyhedron where $\Theta$ is the labeling which
agrees with the dihedral angles of $\PP$ and assigns $\pi/2$ to each of the
introduced edges.
\end{proposition}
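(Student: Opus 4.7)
The plan is to apply the generalized Andreev theorem (Theorem~\ref{andbb}) to the labeled abstract polyhedron $(R, \Theta)$ and conclude that it is realizable as a generalized hyperbolic polyhedron of finite volume. Using Corollary~\ref{turnredlemma} I may assume that $\PP$ is turnover-reduced, so $Q$ inherits no nontrivial prismatic $3$-circuits, and since $Q$ is an atoroidal component in the sense of Theorem~\ref{bspolyorb}, it contains no nontrivial Euclidean prismatic $4$-circuits. By Theorem~\ref{sfpoly}, any triangular prism component of the decomposition would be Seifert-fibered rather than atoroidal, so $R$ is not a triangular prism; and $R$ has more than four vertices because $Q$ acquires new corner vertices from the cuts in addition to the vertices inherited from $\PP$.

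I then classify each prismatic circuit $\gamma$ in $R^*$ by how many of its edges are \emph{new} (boundary edges of cut faces, carrying label $\pi/2$) versus \emph{old} (inherited from $\PP$ with their original labels). If every edge of $\gamma$ is old, then $\gamma$ was already a prismatic circuit in $\PP^*$, and the required strict Andreev inequality follows directly from Andreev's theorem applied to $\PP$. The ideal-vertex condition in Theorem~\ref{andbb} is automatic, since every ideal vertex of $R$ descends from an ideal vertex of $\PP$ and all new edges have label $\pi/2 < \pi$. It remains to handle circuits $\gamma$ that contain at least one new edge.

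The main obstacle will be this case analysis. For a putative prismatic $3$-circuit passing through a cut face $F_1$ and two original faces $F_a, F_b$ among the faces of the defining circuit of $F_1$, either $F_a$ and $F_b$ are cyclically adjacent in that defining circuit --- in which case the three edges of the $3$-circuit all meet at the common corner of $F_1$ lying on the original edge shared by $F_a$ and $F_b$, so $\gamma$ is not actually prismatic --- or $F_a$ and $F_b$ are diagonally opposite, in which case the only way for them to share an original edge in $\PP$ leads, via trivalence of vertices in $\PP$, to two edges of the defining prismatic $4$-circuit sharing a vertex of $\PP$, contradicting the definition of prismatic. The delicate case is a prismatic $4$-circuit in $R^*$ using four new edges: it must alternate between two cut faces and two original faces, with label sum exactly $2\pi$. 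Atoroidality forces such a circuit to bound a single face $F_0$; since two distinct cut faces are never adjacent in $R^*$, $F_0$ must be an original face of $\PP$ adjacent to both $F_a$ and $F_b$, and the trivalence-plus-prismaticity argument used above shows that the required configuration forces either a prismatic $3$-circuit in $\PP$ (excluded by turnover-reducedness) or a non-prismatic defining $4$-circuit (excluded by hypothesis). Once these combinatorial cases are dispatched, Theorem~\ref{andbb} yields the desired hyperbolic realization of $(R, \Theta)$, and finite volume follows because no new hyperideal link is introduced by the cuts.
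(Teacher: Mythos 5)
Your proof takes the same overall route as the paper — verify Andreev's conditions for $(R,\Theta)$ directly — but there are two genuine gaps in the case analysis.

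\textbf{The diagonal $3$-circuit case.} You correctly identify that a new prismatic $3$-circuit $\gamma$ through a cut face $q$ must use two opposite edges of $q$, so the two original faces $F_a,F_b$ it hits are diagonal in the defining $4$-circuit $\delta$. But your stated reason for dismissing this case --- that $F_a$ and $F_b$ sharing an original edge $e_3$ forces, ``via trivalence,'' two edges of $\delta$ to share a vertex --- does not follow. Nothing about $F_a$ and $F_b$ sharing an edge elsewhere in $\PP$ directly implies the four edges of $\delta$ are not pairwise vertex-disjoint. The actual argument (and the one the paper uses) proceeds differently: since $Q$ is turnover-reduced, any prismatic $3$-circuit in $R^*$ must be \emph{trivial}, i.e.\ parallel to a triangular face. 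But a triangular face of $R$ adjacent to $q$, $F_a$, and $F_b$ would have to be one of the four faces $F_k$ through which $\delta$ passes, cut down to a triangle; such a triangle has two of its three edges equal to the truncated pieces of $a_{k-1}$ and $a_k$, which would then meet at a vertex of $\PP$, contradicting the prismaticity of $\delta$. Triviality is the crucial intermediate step and your proof skips it.

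\textbf{The $4$-circuit case with one cut face.} You classify new prismatic $4$-circuits by the number of new edges and declare the four-new-edge case (through two cut faces) to be ``the delicate case.'' But a prismatic $4$-circuit through a single cut face $q$ uses exactly \emph{two} edges of $q$ (opposite ones) together with two original edges of $\PP$ through some third face $X$; this two-new-edge configuration is entirely omitted from your argument, and it is arguably the harder configuration since its $\Theta$-label sum is $\pi + \Theta(F_a,X) + \Theta(X,F_b)$, which is not automatically less than $2\pi$. Atoroidality of $Q$ has to be invoked to rule it out, exactly as the paper does for all circuits through introduced faces, and the reasoning is not the same as the four-new-edge alternating situation you analyze.

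The remaining parts of your proposal (inheriting strict Andreev inequalities for circuits consisting entirely of original edges, the introduced vertices having spherical links because they meet two $\pi/2$-edges, and hence $R$ having no new hyperideal vertices) are consistent with the paper's proof.
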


\begin{proof}
The proof consists of showing that $(R,\Theta)$ satisfies the conditions of
Andreev's theorem.  The link of each introduced vertex will be spherical
because each such vertex meets at least two edges with dihedral angle
$\pi/2.$  

Suppose that the introduction of a quadrilateral face, $q$, created a prismatic
$3$--circuit $\gamma$ that passes through edges $e_1$ and $e_2$ of $q$ along with
an edge $e_3$ not in $q$.  By assumption, $Q$ is turnover--reduced, so
$\gamma$ must be parallel to a triangular face.  This is a contradiction
because in $\QQ_{\PP}^{\perp},$ the $4$--circuit corresponding to $q$ would
not be prismatic.

No prismatic $4$--circuits pass through any of the introduced faces, for this
would contradict the fact that $Q$ corresponds to an atoroidal component of
the decomposition.
\end{proof}

  We use the following
theorem of Agol, Storm, and W. Thurston to show that the procedure in
Proposition~\ref{topdecomp} does not increase the volume of the atoroidal
components \cite{ast}.

\begin{theorem}[Agol--Storm--W. Thurston]\label{ast}
Let $\overline{M}$ be a compact manifold with interior M, a hyperbolic
$3$--manifold of finite volume.  Let $\overline{\Sigma}$ be an incompressible
surface in $\overline{M}$.  Then
$$\vol(M) \geq \frac{1}{2} V_3 || D( M \split \Sigma) ||,$$
where $D( M \split \Sigma)$ denotes the double of $M \split \Sigma$ along
$\Sigma$ and $|| \cdot ||$ denotes the Gromov invariant.
\end{theorem}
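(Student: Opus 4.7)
The plan is to use Perelman's Ricci flow with surgery in combination with a minimal surface argument and Gromov's theorem identifying simplicial volume with hyperbolic volume for closed aspherical pieces. First, I would replace the incompressible surface $\overline{\Sigma}$ with a least-area embedded minimal representative $\Sigma'$ in its isotopy class, produced via Freedman--Hass--Scott or Meeks--Simon--Yau. Because $M$ has sectional curvature $-1$ and $\Sigma'$ is stationary for area, the Gauss equation together with the second variation inequality give control on the scalar curvature of the doubled metric on $DN := D(M\split\Sigma')$.

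Next, I would construct a Riemannian metric on $DN$ by doubling. This metric is smooth off $\Sigma'$ but only $C^0$ across $\Sigma'$, with $\vol(DN) = 2\,\vol(M)$. The key technical step is to smooth this metric to a genuinely smooth metric $g_\varepsilon$ whose scalar curvature is bounded below by $-6-\varepsilon$ and whose volume exceeds $2\vol(M)$ by at most $O(\varepsilon)$. Because $\Sigma'$ is minimal and the two sides are mirror images, the mean curvature jump across $\Sigma'$ is zero, so this smoothing can be achieved by a suitable normal-coordinate mollification, with the Gauss equation providing the scalar curvature control.

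Then I would run Perelman's Ricci flow with surgery on $(DN, g_\varepsilon)$. Perelman's monotonicity of the rescaled $\overline{\lambda}$-invariant implies that in the thick/thin decomposition produced by the flow, the total volume of the hyperbolic (thick) pieces is bounded above by $\tfrac{1}{6^{3/2}}$ times an expression involving $\int R_-^{3/2}\,dV$ on the initial metric, which in turn is bounded by $\vol(DN, g_\varepsilon)$ up to $O(\varepsilon)$. Combined with Gromov's theorem, which states that the simplicial volume of a $3$-manifold equals $V_3^{-1}$ times the sum of the hyperbolic volumes of the hyperbolic pieces in its geometric decomposition, one obtains
$$V_3 \cdot \|DN\| \;\leq\; \vol(DN, g_\varepsilon) \;\leq\; 2\vol(M) + O(\varepsilon).$$
Sending $\varepsilon \to 0$ yields the claimed inequality.

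The main obstacle I expect is the technical smoothing of the doubled metric while simultaneously preserving the scalar curvature lower bound and controlling the volume: one needs a quantitative estimate that the singular contribution along $\Sigma'$ (a delta-function concentration of scalar curvature proportional to the mean curvature jump, which vanishes since $\Sigma'$ is minimal) can be resolved without creating regions of very negative scalar curvature. A secondary difficulty is that $M$ is only of finite volume, not closed, so one must either work with the compact core and control what happens in the cusps (where the minimal surface $\Sigma'$ eventually has to be cut off using horospherical cross-sections) or extend Perelman's monotonicity arguments to the finite-volume setting.
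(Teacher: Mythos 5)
The paper does not prove this theorem; it is quoted (with minor notational adaptation) from the cited Agol--Storm--Thurston reference and used there as a black box, so there is no ``paper's own proof'' against which to compare yours. Your sketch does, however, reproduce the argument of that reference in its essentials: replace $\overline{\Sigma}$ by a least-area minimal representative using Freedman--Hass--Scott or Meeks--Simon--Yau; cut and double to get a Lipschitz metric on $DN := D(M\split\Sigma)$ with $\vol(DN) = 2\vol(M)$ and scalar curvature $\geq -6$ away from the gluing locus; then apply the Perelman-style comparison for $3$--manifolds with $R\geq -6$ together with Gromov's identification of $V_3\|DN\|$ with the total volume of hyperbolic pieces in the geometric decomposition.

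One technical caveat worth flagging: Agol, Storm, and Thurston do not actually produce a smooth metric $g_{\varepsilon}$ with $R\geq -6-\varepsilon$ by mollification. They work directly with the Lipschitz doubled metric, formulate the lower scalar curvature bound in a weak (barrier/distributional) sense, and verify that Perelman's monotonicity arguments go through at that level of regularity. A genuinely smooth approximation preserving both a scalar-curvature lower bound and the volume \emph{is} available here --- precisely because the mean curvature jump across the minimal surface vanishes, a Miao-type gluing result applies --- but this is a nontrivial theorem in its own right and not a ``suitable normal-coordinate mollification''; if you take that route you must cite the quantitative smoothing result. Your secondary concern about the finite-volume setting is correct and is handled in the reference by horospherical truncation of the cusps and explicit error estimates.
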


In the case where $\Sigma$ is separating and each component of $M
\split \Sigma$ is hyperbolic, this theorem says that the sum of the volumes of
the rehyperbolized components of $M \split \Sigma$ is no more than the volume of
$M$.

The following shows that the volume of an atoroidal component of the Bonahon--Siebenmann
decomposition of $\QQ_{\PP}^{\perp}$ is no less than that of the
corresponding component with totally geodesic boundary, as described in
Proposition~\ref{topdecomp}.

\begin{proposition}
Let $Q$ be a component of the decomposition of $\PP$ corresponding to an
atoroidal component of the decomposition of $\QQ_{\PP}^{\perp}$ and let $\MRR$
be the realization of $(R,\Theta)$ as described in
Proposition~\ref{topdecomp}.  Then $\vol(\MRR) \leq \vol(Q)$.
\end{proposition}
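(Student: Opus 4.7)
The plan is to apply Theorem~\ref{ast} (Agol--Storm--W.~Thurston) after doubling both $Q$ and $\MRR$ and passing to a finite manifold cover. First, let $\widetilde Q$ denote the double of $Q$ along the faces of $\partial Q$ inherited from $\partial\PP$; equivalently, $\widetilde Q$ is the component of $\QQ_\PP$ obtained by cutting along the doubled decomposition surfaces. Its boundary is a disjoint union of $2$-spheres, each with four cone points of angles $2\theta_i$, where the $\theta_i$ are the dihedral angles of $\PP$ along the prismatic $4$-circuit defining that decomposition quadrilateral. Let $\widetilde{\MRR}$ be the analogous double of $\MRR$: the introduced $\pi/2$-edges double to smooth edges, so $\widetilde{\MRR}$ is a hyperbolic cone manifold with totally geodesic boundary having the same combinatorial type and intrinsic cone geometry as $\partial\widetilde Q$. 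Thus $\widetilde Q$ and $\widetilde{\MRR}$ are homeomorphic cone manifolds with intrinsically identical boundary, differing only in whether the boundary is realized totally geodesically in the ambient hyperbolic structure.

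When $\PP$ is a Coxeter polyhedron, $\QQ_\PP$ is an orbifold and Selberg's lemma provides a common finite manifold cover: a compact hyperbolic $3$-manifold $N'$ covering $\widetilde Q$ with incompressible surface boundary, and the homeomorphic manifold $N$ carrying the complete hyperbolic structure with totally geodesic boundary that covers $\widetilde{\MRR}$. I would then apply Theorem~\ref{ast} to $N'$ with $\overline\Sigma = \partial N'$: cutting along its own boundary is trivial, so $D(N' \split \Sigma) = D(N')$, which is homeomorphic to $D(N)$. Since $N$ has totally geodesic boundary, $D(N)$ is a closed hyperbolic $3$-manifold of volume $2\vol(N)$, so Gromov's equality gives $\|D(N')\| = 2\vol(N)/V_3$, and Theorem~\ref{ast} yields
\[
\vol(N') \;\geq\; \tfrac{1}{2}V_3 \cdot \tfrac{2\vol(N)}{V_3} \;=\; \vol(N).
\]
Dividing by the degree of the cover gives $\vol(\widetilde Q) \geq \vol(\widetilde{\MRR})$, and halving yields $\vol(Q) \geq \vol(\MRR)$.

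For a general non-Coxeter $\PP$, I would reduce to the Coxeter case by approximating the dihedral angles by $\pi/n$ within Andreev's range, using the combinatorial stability of the decomposition of Section~\ref{C:decomp} together with Schl\"afli's formula for volume continuity. The main obstacle will be making this reduction rigorous: producing nearby Coxeter realizations whose decompositions match those of $\PP$, and verifying that $D(N')$ and $D(N)$ share the same underlying topology that supports the complete hyperbolic structure (which follows from Mostow rigidity once the combinatorics and cone data are fixed). An alternative would be a direct extension of Theorem~\ref{ast} to the cone-manifold category via the straightening-of-simplices argument of Agol, Storm, and W.~Thurston applied to cone-structure simplices.
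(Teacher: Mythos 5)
Your proposal takes essentially the same route as the paper's proof: pass to a finite manifold cover via Selberg's lemma and invoke the Agol--Storm--W.~Thurston inequality (Theorem~\ref{ast}), reading the lower bound off the Gromov norm of the double, which is computed from the geodesic-boundary realization $\MRR$. The only substantive difference is the order of operations: you cover $\widetilde Q$ first and take $\Sigma = \partial N'$, whereas the paper doubles $\QQ_Q$ to the closed orbifold $D(\QQ_Q)$, then covers by $M_Q$, and takes $\Sigma$ to be the interior surface that is the preimage of $\partial \QQ_Q$ — the latter is preferable because Theorem~\ref{ast} as stated asks for a compact manifold whose \emph{interior} is a finite-volume hyperbolic manifold, and $N'$ (carrying the metric induced from $\QQ_{\PP}$, with non-geodesic boundary at finite distance) does not literally satisfy this, while the closed $M_Q$ sidesteps the issue. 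Your closing worry about the non-Coxeter case is reasonable but orthogonal to the paper's argument, which (like yours) implicitly works in the Coxeter/orbifold setting so that Selberg applies.
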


\begin{proof}
Consider the double, $D(\QQ_Q)$ of the orbifold $\QQ_Q$ along its boundary.  The
orbifold, $D(\QQ_Q)$, admits a finite volume hyperbolic structure. 
By Selberg's lemma, there exists a finite index manifold cover $M_Q$ of
$D(\QQ_Q)$.  The preimage of $\partial(D(\QQ_Q))$ in $M_Q$ is an
incompressible surface $\Sigma$.  If the index of the cover is $n$, then
$\vol(M_Q) = n \vol(D(\QQ_Q)) = 4n\vol(Q).$  Then Theorem~\ref{ast} implies
	
$$4n \vol(Q) =  \vol(M_Q) \geq \frac{1}{2} V_3 ||D(M \split \Sigma)|| 
= 4n\vol(\MRR).$$
\end{proof}

\section{Deformations of polyhedra and volume change}\label{deformation}

Suppose that $P$ is an abstract polyhedron with $E$ edges.  Labelings of $P$ are
given by points in $\RR^E$.  Throughout, we consider only
non--obtuse labelings.  Define 
$$\Omega(P)=\left\{ \Theta \in (0,\pi/2]^E \, \left\vert \, 
\begin{array}{c}
 (P,\Theta)\text{ is realizable as a finite volume}  \\
 \text{hyperbolic polyhedron} 
\end{array}
\right. \right\}.$$
This set is in one--to--one correspondence with the set of isometry classes of
non--obtuse hyperbolic polyhedra of finite volume with $1$--skeleton
isomorphic to $P$. For convenience will pass
from labelings to polyhedra without comment.

If $\Omega(P)$ is non--empty, Andreev's theorem implies that the
closure of $\Omega(P)$ is a convex polytope in $\RR^E$.  Define $\whop(P)$
similarly as the set of non--obtuse labelings that yield a hyperbolic polyhedron of finite
or infinite volume. Theorem~\ref{andbb} implies that the closure of $\whop(P)$ is also a
convex polytope in $\RR^E$. 

Suppose that $\PP(\Theta_0)$ and $\PP(\Theta_1)$ are hyperbolic realizations of
labeled abstract polyhedra $(P,\Theta_0)$ and $(P,\Theta_1)$.  A \textit{smooth
deformation from $\PP(\Theta_0)$ to $\PP(\Theta_1)$} is a piecewise--smooth map
$$\Phi : [0,1] \longrightarrow \whop(P)$$
such that $\Phi(0) = \PP(\Theta_0)$ and $\Phi(1) = \PP(\Theta_1).$
A deformation $\Phi$ is said to be  \textit{angle--nondecreasing} or
\textbf{angle--nonincreasing} if the
projection to each coordinate of the target space  composed with $\Phi$ is a
nondecreasing or nonincreasing function, respectively.

The following proposition says that there exists an angle--nondecreasing
deformation from any generalized hyperbolic Coxeter polyhedron with no prismatic
$3$--circuits to a finite volume hyperbolic polyhedron with all dihedral
angles $\pi/2$ or $\pi/3$.

\begin{proposition}\label{unif}
Suppose $P$ is an abstract polyhedron with at least $6$ faces containing no
prismatic $3$--circuits and $\Theta \in \whop(P)$ is of the form
$\Theta(e_i)=\pi/n_i,$ where each $n_i \geq 2$ is an integer.  Then there
exists $\Theta' \in \Omega(P)$ of the form $\Theta'(e_i)=\pi/m_i,$ where each
$m_i \in \{2,3\}$ and $m_i \leq n_i.$ 
\end{proposition}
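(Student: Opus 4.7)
The plan is to construct $\Theta'$ directly on the edges of $P$ and then verify the conditions of Andreev's theorem. The idea is that labels drawn from $\{\pi/2,\pi/3\}$ are very rigid: only a couple of Andreev's conditions can possibly fail under such a labeling, and the hypothesis $\Theta\in\whop(P)$ is tailored to give exactly enough flexibility to satisfy them.

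First I would provisionally set $\Theta'(e_i)=\pi/2$ on every edge of $P$. This assignment satisfies $m_i=2\leq n_i$ since $n_i\geq 2$, and it makes several Andreev conditions automatic: condition (2) reads $3\pi/2\geq\pi$ at each degree-$3$ vertex, condition (3) reads $2\pi=2\pi$ at each degree-$4$ vertex, condition (4) is vacuous by the no-prismatic-$3$-circuits hypothesis, and condition (6) is vacuous since $P$ has at least $6$ faces. The only condition that can fail is (5) at the prismatic $4$-circuits: with all labels equal to $\pi/2$, the sum on each $\gamma$ is exactly $2\pi$, violating the required strict inequality. To repair this I invoke $\Theta\in\whop(P)$: Theorem~\ref{andbb}(2) gives $\sum_{e\in\gamma}1/n_e<2$, and since each $1/n_e\leq 1/2$ this forces at least one edge $e\in\gamma$ with $n_e\geq 3$. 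For each prismatic $4$-circuit $\gamma$ I would pick such a representative edge $e_\gamma$ and reset $\Theta'(e_\gamma)=\pi/3$; then $m_{e_\gamma}=3\leq n_{e_\gamma}$ is preserved and the sum on $\gamma$ drops strictly below $2\pi$.

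The main obstacle will be coordinating this choice with Andreev's condition (3). If a representative $e_\gamma$ is incident to a degree-$4$ vertex $v$, then after the reset the angle sum at $v$ is $3\pi/2+\pi/3<2\pi$, making $v$ hyperideal and pushing $\Theta'$ out of $\Omega(P)$. The hard part of the argument will be showing that for each prismatic $4$-circuit one can in fact select a representative edge whose endpoints are both of degree $3$; this is where the full force of $\Theta\in\whop(P)$ is used, combining the inequality $\sum_{e\in\gamma}1/n_e<2$ with the upper bound $\sum_{e\ni v}1/n_e\leq 2$ at each degree-$4$ vertex on $\gamma$ and the prismatic structure that the four edges of $\gamma$ have pairwise disjoint endpoints. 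Finally I would verify Andreev's condition (7): the ideal vertices of $(P,\Theta')$ are precisely the degree-$4$ vertices and the degree-$3$ vertices all of whose incident edges carry the label $\pi/3$, and in either case a violating configuration of opposite faces meeting in a single ideal vertex distinct from the endpoints of $e_{ij}$ and $e_{jk}$ would force a prismatic $3$-circuit in $P$, contradicting the hypothesis.
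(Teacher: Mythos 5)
Your proposal is essentially the same idea as the paper's, but you take a less direct route that manufactures difficulties the paper sidesteps. The paper simply sets $\Theta'(e_i) = \pi/n_i$ when $n_i \in \{2,3\}$ and $\Theta'(e_i) = \pi/3$ when $n_i > 3$ --- that is, every edge with $n_i \geq 3$ gets $\pi/3$ simultaneously, with no choice of representatives --- and then verifies Andreev's conditions; (5) and (7) drop out immediately from conditions (2) and (3) of Theorem~\ref{andbb}, since each prismatic $4$--circuit, and each pair $e_{ij},e_{jk}$ as in (7), must contain an edge with $n_e \geq 3$, and that edge carries label $\pi/3$ under $\Theta'$.

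The step you flag as ``the hard part'' --- that one can always pick, on each prismatic $4$--circuit, an edge with $n_e \geq 3$ whose endpoints both have degree $3$ --- is in fact automatic, and your proposed strategy of ``combining the inequality $\sum_{e\in\gamma}1/n_e<2$ with the upper bound $\sum_{e\ni v}1/n_e\leq 2$'' is not the right tool. The clean observation is this: with non--obtuse labels a degree--$4$ vertex has angle sum $\leq 2\pi$, and for the vertex to be ideal (as required for $\Theta'$ to lie in $\Omega(P)$, by condition (3) of Theorem~\ref{and}) the sum must equal $2\pi$, forcing all four incident labels to be $\pi/2$. So every edge with $n_e \geq 3$ already has both endpoints of degree $3$, and there is no coordination problem. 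Once you see this, there is no reason to hunt for a single representative per circuit: just set all edges with $n_e\geq 3$ to $\pi/3$ at once, which is precisely the paper's labeling.

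Your treatment of condition (7) is also off. You claim a violating configuration ``would force a prismatic $3$--circuit in $P$,'' but the hypothesis of (7) --- two faces meeting in a single ideal vertex away from the two shared edges --- does not give a prismatic $3$--circuit, and the no--prismatic--$3$--circuit hypothesis is not what kills it. Rather, one argues as for (5): condition (3) of Theorem~\ref{andbb} gives $\Theta(e_{ij})+\Theta(e_{jk})<\pi$, so at least one of $n_{ij},n_{jk}$ is $\geq 3$, and $\Theta'$ assigns $\pi/3$ there, whence $\Theta'(e_{ij})+\Theta'(e_{jk})\leq \pi/3+\pi/2<\pi$.
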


\begin{proof}
Define $\Theta'$ as follows:
\begin{equation}
	\Theta'(e_i)=
		\begin{cases}
		\pi/n_i & \text{if }n_i=2 \text{ or } 3 \\
		\pi/3 & \text{if }n_i>3.
		\end{cases}
\end{equation}
It suffices to check that $\Theta'$ satisfies the conditions Andreev's theorem
(Theorem~\ref{and}).  By assumption, conditions (1), (2), (3), (4), and (6) are
satisfied.   
The labeling $\Theta$ satisfies condition (2) of the hyperideal version of
Andreev's theorem (Theorem~\ref{andbb}), so for
any prismatic $4$--circuit formed by edges $e_p$, $e_q$, $e_r$, and $e_s$,
$\Theta(e_i)\leq \pi/3$ for at least one $i\in\{p,q,r,s\}$.  For such an $i$,
$\Theta'(e_i)=\pi/3$.  Hence
$\Theta'(e_p)+\Theta'(e_q)+\Theta'(e_r)+\Theta'(e_s)<2\pi$, so $\Theta'$
satisfies condition (5).  The argument is
similar to show that $\Theta'$ satisfies condition (7).
\end{proof}

Schl\"{a}fli's formula describes how the volume of a polyhedron changes as it
is deformed.  The following generalization of Schl\"{a}fli's formula is due
to Milnor.  Rivin supplies a proof in \cite{rivineucl}.

\begin{theorem}[Schl\"{a}fli's formula]
Let $\PP$ be a non--obtuse hyperbolic polyhedron with vertices $v_1, \dots,
v_n,$ where $v_i$ is ideal for $i>m.$  Let $H_{m+1}, \dots, H_{n}$ be a
collection of horospheres such that $H_i$ is centered at $v_i$.  Further if
there is an edge $e_{ij}$ between $v_i$ and $v_j$, then $l_{ij}$ is the
distance between $v_i$ and $v_j$ if $i,j \leq m$, the signed distance between
$H_i$ and $H_j$ (negative if the corresponding horoballs intersect) if $i,j >
m$ and the signed distance between $H_i$ and $v_j$ if $i \leq m$ and $j>m.$
Then $$d\vol(\PP) = -\frac{1}{2} \sum_{\text{edges } e_{ij}} l_{ij}
d\theta_{ij},$$ where $\theta_{ij}$ is the dihedral angle along the edge
$e_{ij}.$
\end{theorem}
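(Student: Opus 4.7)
The plan is to prove the formula first for compact polyhedra (the classical Schl\"afli formula) and then extend to the ideal case by truncating cusps with horospheres and carefully tracking the compensating contributions.

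\textbf{Step 1 (Compact case).} Assume first that $\PP$ is compact, so all vertices are finite. The plan is to reduce to the case of a tetrahedron by triangulating $\PP$. Pick an interior point $p_0 \in \PP$ and cone $\partial \PP$ from $p_0$, subdividing each face into triangles. This expresses $\PP$ as a union of geodesic tetrahedra meeting along interior faces. Under a smooth deformation $\Theta_t$ of the original dihedral angles, one tracks $p_0$ and the interior subdivision smoothly. Along interior edges (those not lying in $\partial \PP$), contributions from adjacent tetrahedra cancel because the interior dihedral angles sum to $2\pi$ or $\pi$ and thus have vanishing differential. So it suffices to prove the formula for a single tetrahedron, which is Schl\"afli's original result; a direct differentiation argument, or the argument via the spherical link at each vertex and the Gauss--Bonnet formula on the link, yields $d\vol(T) = -\tfrac{1}{2} \sum l_{ij}\, d\theta_{ij}$.

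\textbf{Step 2 (Truncation at ideal vertices).} Now let $\PP$ have ideal vertices $v_{m+1}, \dots, v_n$ with chosen horospheres $H_{m+1},\dots,H_n$. Form the compact truncated polyhedron $\PP^{\mathrm{tr}}$ by removing the open horoballs bounded by the $H_i$. The boundary of $\PP^{\mathrm{tr}}$ contains one new horospherical face $F_i$ for each ideal vertex, meeting each face of $\PP$ incident to $v_i$ orthogonally (since horospheres are orthogonal to geodesic rays toward the ideal center). Apply the compact Schl\"afli formula of Step 1 to $\PP^{\mathrm{tr}}$:
\begin{equation}
d\vol(\PP^{\mathrm{tr}}) = -\tfrac{1}{2} \sum_{\text{original edges}} \ell_{ij}^{\mathrm{tr}}\, d\theta_{ij} \; -\; \tfrac{1}{2}\sum_{\text{horospherical edges}} \lambda\, d\alpha,
\end{equation}
where $\ell_{ij}^{\mathrm{tr}}$ is the truncated edge length and the second sum runs over the new edges $\partial F_i \cap (\text{faces of }\PP)$ with dihedral angles $\alpha = \pi/2$.

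\textbf{Step 3 (Horospherical contributions vanish).} The key observation is that the dihedral angles along all edges of $\partial F_i$ are constantly $\pi/2$ during any deformation, since horospheres meet the faces of $\PP$ orthogonally by construction. Thus each $d\alpha = 0$ and the second sum in the display vanishes. It remains to relate $d\vol(\PP^{\mathrm{tr}})$ to $d\vol(\PP)$ and to convert $\ell_{ij}^{\mathrm{tr}}$ into the signed distances $l_{ij}$ asserted in the statement. Writing
\begin{equation}
\vol(\PP) = \vol(\PP^{\mathrm{tr}}) + \sum_{i>m} \vol(C_i),
\end{equation}
where $C_i$ is the cusp region cut off by $H_i$ (interpreted with signs if horoballs overlap), one computes $d\vol(C_i)$ by differentiating the elementary horoball-volume integral. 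A direct calculation in the upper half-space model, after normalizing $v_i$ to $\infty$, shows that the change $d\vol(C_i)$ exactly accounts for the discrepancy between $\ell_{ij}^{\mathrm{tr}}$ and the signed distance $l_{ij}$ along each edge incident to $v_i$, turning every $\ell_{ij}^{\mathrm{tr}} d\theta_{ij}$ into $l_{ij}\, d\theta_{ij}$.

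\textbf{Main obstacle.} The nontrivial step is Step 3, reconciling the cusp volume change with the passage from truncated length to signed distance. The bookkeeping is most delicate when an edge $e_{ij}$ has \emph{both} endpoints ideal, or when the chosen horoballs overlap so that $l_{ij}<0$; the proof must verify that the signed-distance convention is exactly what makes $d\vol(C_i) + d\vol(C_j)$ absorb the ambiguity. Once this cancellation is established edge by edge, summing over all edges gives the stated formula.
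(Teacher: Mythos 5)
The paper does not prove this theorem at all: it states that the generalization is due to Milnor and cites Rivin for a proof. So your attempt cannot be compared against a paper proof; it must be assessed on its own terms, and as written it has two genuine gaps.

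First, Step~2 applies the Schl\"afli formula of Step~1 to the truncated polyhedron $\PP^{\mathrm{tr}}$, but $\PP^{\mathrm{tr}}$ is not a geodesic polyhedron: the horospherical caps are not totally geodesic. Step~1 establishes the formula only for compact polyhedra with geodesic faces, and for a region with curved boundary the first variation of volume acquires an additional surface term (an integral of the normal velocity against the boundary's mean curvature) beyond the geodesic-edge contributions. The displayed equation in Step~2 simply omits this. You would either need to prove a variant of Schl\"afli's formula valid for piecewise geodesic-and-horospherical boundary (tracking that extra surface contribution explicitly), or avoid horospheres entirely by truncating with totally geodesic planes near each ideal vertex and then passing to the limit as those planes recede.

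Second, Step~3's bookkeeping rests on a discrepancy that does not exist. With the horoballs chosen small enough to be disjoint, the truncated length $\ell_{ij}^{\mathrm{tr}}$ along each edge is precisely the (signed) distance along $e_{ij}$ between the relevant horospheres and finite vertices, i.e.\ $\ell_{ij}^{\mathrm{tr}} = l_{ij}$. There is nothing for the cusp-volume differentials $d\vol(C_i)$ to ``absorb'' on the edge-length side; they constitute a genuinely separate contribution $d\vol(\PP) - d\vol(\PP^{\mathrm{tr}}) = \sum_i d\vol(C_i)$ which must instead cancel the missing surface term from the first gap. Establishing that cancellation is exactly the substance of the proof; the argument as written never does so.

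Finally, the well-posedness of the right-hand side (its independence of the horoball choice) is governed by the fact the paper points out immediately after the statement: the link of an ideal vertex is a Euclidean polygon, so the dihedral angles incident to an ideal vertex sum to a combinatorial constant and $\sum_{j} d\theta_{ij} = 0$ there. Your sketch never invokes this. Any correct proof must use it, either to show the RHS is choice-independent before a limiting argument, or to justify the final reconciliation of cusp volumes with the surface term.
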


The perhaps surprising fact that the right--hand side does not depend on the
choice of horoballs follows from the fact that the link of an ideal vertex is
a
Euclidean polygon.

One consequence of Schl\"{a}fli's formula is that angle--increasing
deformations of polyhedra are volume--decreasing.  In particular, it implies
that the deformation given in Proposition~\ref{unif} is volume--nonincreasing.

\begin{corollary}\label{aicor}
Suppose that $P$ is an abstract polyhedron with no prismatic $3$--circuits.
Let  $\Theta, \Theta' \in \Omega(P)$ be as in Proposition~\ref{unif}.
If $\PP$ and $\PP'$ are the hyperbolic realizations of $(P,\Theta)$ and
$(P,\Theta')$ respectively, then $\vol(\PP) \geq \vol(\PP')$ with equality if and
only if $\Theta=\Theta'$.
\end{corollary}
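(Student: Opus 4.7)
The plan is to interpolate linearly between $\Theta$ and $\Theta'$ and apply Schl\"{a}fli's formula, exploiting the fact that the deformation is angle--nondecreasing.

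First I would define $\Theta_t := (1-t)\Theta + t\Theta'$ for $t \in [0,1]$. From the explicit formula for $\Theta'$ exhibited in the proof of Proposition~\ref{unif}, we have $\Theta'(e_i) \geq \Theta(e_i)$ for every edge, with equality precisely when $n_i \in \{2,3\}$; in particular the path is coordinate--wise nondecreasing. To justify that this is a deformation in the sense required, I would verify $\Theta_t \in \Omega(P)$ for every $t$. Each condition in Andreev's theorem (Theorem~\ref{and}) is either an affine equality in the labels (condition (3), at degree--$4$ vertices) or a linear (strict or non--strict) inequality; hence $\Omega(P)$ is convex, and the segment joining $\Theta, \Theta' \in \Omega(P)$ remains in $\Omega(P)$. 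Each $\Theta_t$ is therefore realized by a finite--volume hyperbolic polyhedron $\PP_t$, and the map $t \mapsto \PP_t$ is a piecewise--smooth (in fact linear) angle--nondecreasing deformation from $\PP$ to $\PP'$.

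The main step is to differentiate volume along this family using Schl\"{a}fli's formula:
$$\frac{d}{dt}\vol(\PP_t) = -\frac{1}{2}\sum_{\text{edges } e_{ij}} l_{ij}(t) \bigl(\Theta'(e_{ij}) - \Theta(e_{ij})\bigr),$$
where the coefficients $\Theta'(e_{ij}) - \Theta(e_{ij})$ are nonnegative constants. The main obstacle is that when $\PP_t$ has ideal vertices, the lengths $l_{ij}(t)$ are signed distances between horospheres and could a priori be negative, so monotonicity is not immediate from the formula as stated. I would resolve this by recalling that the right--hand side is independent of the choice of horospheres: at any fixed $t$, shrinking every horoball uniformly and sufficiently forces each $l_{ij}(t) > 0$. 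This gives $\frac{d}{dt}\vol(\PP_t) \leq 0$ pointwise in $t$, and integrating over $[0,1]$ yields $\vol(\PP) \geq \vol(\PP')$.

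For the equality clause, suppose $\Theta \neq \Theta'$. Then some coefficient $\Theta'(e_{ij}) - \Theta(e_{ij})$ is strictly positive, and at each $t$ the horoball--shrinking argument can be arranged so that the corresponding $l_{ij}(t) > 0$ while all other $l_{ij}(t) \geq 0$. The offending term contributes strictly negatively to the sum, so $\frac{d}{dt}\vol(\PP_t) < 0$ on all of $[0,1]$, and strict integration gives $\vol(\PP) > \vol(\PP')$. Conversely if $\Theta = \Theta'$ then $\PP = \PP'$ by the uniqueness clause of Andreev's theorem, so volumes agree. This completes the argument.
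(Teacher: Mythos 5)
Your proof is correct and follows essentially the same strategy as the paper's: use convexity of $\Omega(P)$ (a consequence of Andreev's theorem) to realize the linear interpolation $\Theta_t = (1-t)\Theta + t\Theta'$ as a coordinatewise-nondecreasing path in $\Omega(P)$, then integrate Schl\"{a}fli's formula. The only addition in your write-up is the explicit horoball-shrinking argument to handle the sign of the edge lengths at ideal vertices, which the paper leaves implicit; that step is sound since the Schl\"{a}fli sum is independent of the choice of horoballs (the link of an ideal vertex is Euclidean, so its angle sum is locally constant and the corresponding $d\theta$'s cancel).
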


\begin{proof}  
By Theorem~\ref{and}, $\Omega(P)$ is a convex polytope in $\RR^E$.  Hence the
line segment 
$$s(t)=(1-t)\Theta+t\Theta'$$ 
is contained in $\Omega(P)$.  Let $\PP_t$ be the hyperbolic realization of
$(P,s(t))$ for $t\in[0,1]$.  By definition of $\Theta',$ the labeling
$s(t)$ restricted to each edge is nondecreasing in $t$.  Schl\"{a}fli's formula
then implies that $\vol(\PP)\geq \vol(\PP').$  Since the volume is
nonincreasing along $s(t)$, $\Theta=\Theta'$  if and only if
$ds = 0$, in which case, the volume is constant.  
\end{proof}

We will now describe how to deal with turnover reduced polyhedra, that is,
polyhedra in which every prismatic $3$--circuit is trivial.  Suppose $\PP$ is
a generalized hyperbolic polyhedron.  Define the \textit{truncation of $\PP$}, denoted
$\PP^{\vee}$, to be the polyhedron
defined by the same planes as $\PP$ along with the polar hyperplanes of any
hyperideal vertices.  If $\PP$ has no hyperideal vertices, then
$\PP^{\vee}=\PP$. If $\Theta$ is a labeling of an abstract polyhedron $P$
realized by $(\PP,\Theta)$, then $P^{\vee}$ has an induced labeling
$\Theta^{\vee}$ defined by keeping all edge labels the same and labeling the
introduced edges $\pi/2$.

Suppose that $P$ is an abstract polyhedron that contains a prismatic
$3$--circuit that is parallel to a triangular face $T$.  The
\textit{extension} of $P$, denoted $\widetilde{P}$, is obtained by replacing
all triangular faces of $P$ that are parallel to prismatic $3$--circuits by
vertices.  The three edges that formed the prismatic $3$--circuit in $P$ are
incident to the new vertex in $\widetilde{P}$.  Geometrically, extension is
roughly inverse to truncation.  More explicitly, suppose that
$\Theta\in\Omega(P)$, $\widetilde{\Theta}$ is the restriction of $\Theta$ to
$\widetilde{P}$ and $\PP$ and $\widetilde{\PP}$ are the hyperbolic
realizations of $(P,\Theta)$ and $(\widetilde{P},\widetilde{\Theta})$
respectively.  If $\Theta$ assigns $\pi/2$ to each of the edges of a
triangular face $T$, then $\PP=\widetilde{\PP}^{\vee}$.  If $\Theta$ assigns an
angle of less than $\pi/2$ to any of the edges in $T$, then $\PP$ contains a
polyhedron $\QQ$ that differs from $\PP$ by a collection of triangular
prisms or tetrahedra such that $\QQ=\widetilde{\QQ}^{\vee}.$  Note that if a
polyhedron has the property that any prismatic $3$--circuit is parallel to a
face, then the extension of such a polyhedron has no prismatic $3$--circuits
whatsoever.  

A deformation \textit{with face degenerations} is a deformation
of a polyhedron in which a face degenerates to a vertex of any type.  In the
case of a polyhedron with a triangular face that is parallel to a prismatic
$3$--circuit, face degeneration occurs as the angle sum along the prismatic
$3$--circuit approaches and possibly exceeds $\pi$.  

\begin{corollary}\label{pi3deform}
If $\PP$ is a turnover reduced hyperbolic polyhedron realizing $(P, \Theta)$
with $\Theta(e)\leq \pi/3$ for any edge $e$ that is not an edge of a
triangular face, then there exists an angle--nondecreasing deformation with
face degenerations
to a $\pi/3$--equiangular ideal polyhedron $\PP'$.
\end{corollary}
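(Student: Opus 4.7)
The plan is to construct the target $\pi/3$-equiangular ideal polyhedron $\PP'$ via Andreev's theorem applied to the extension $\widetilde{P}$, and then to realize the deformation as a single linear interpolation in $\whop(P)$ that culminates in a simultaneous face degeneration at the endpoint.

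First I would identify the target. Since $\PP$ is turnover reduced, every prismatic $3$-circuit in $P$ is parallel to a triangular face, so the extension $\widetilde{P}$ is well-defined and contains no prismatic $3$-circuits whatsoever. Let $\Theta^\dagger$ be the labeling on $\widetilde{P}$ assigning $\pi/3$ to every edge. I would verify Andreev's theorem for $(\widetilde{P}, \Theta^\dagger)$ condition by condition: every vertex is forced to be trivalent with angle sum $3 \cdot \pi/3 = \pi$, hence ideal; condition (4) is vacuous because $\widetilde{P}$ has no prismatic $3$-circuits; condition (5) holds since $4 \cdot \pi/3 < 2\pi$; condition (6) on triangular prisms holds since $6\cdot\pi/3 = 2\pi < 3\pi$; and condition (7) holds since $2\cdot\pi/3 < \pi$. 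Thus $(\widetilde{P}, \Theta^\dagger)$ is realized by a unique $\pi/3$-equiangular ideal polyhedron, which is $\PP'$.

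Next I would build the deformation. Define $\Theta^*$ on $P$ by $\Theta^*(e) = \pi/3$ on edges $e$ not in any triangular face, and $\Theta^*(e) = \Theta(e)$ otherwise. Consider the linear path
$$\Theta_t = (1-t)\Theta + t\Theta^*, \qquad t \in [0,1].$$
By hypothesis $\Theta(e) \leq \pi/3$ on every non-face edge, so $\Theta_t(e)$ is nondecreasing in $t$ on every edge of $P$, and on the face edges it is constantly $\Theta(e)$. For each $t \in [0,1)$ I would show $\Theta_t \in \whop(P)$ by a term-by-term check of Theorem~\ref{andbb}: along each prismatic $3$-circuit $\gamma$, $\sum_i \Theta_t(e_i) = (1-t)\sum_i \Theta(e_i) + t\pi$, which is strictly less than $\pi$ for $t<1$ since the $\Theta$-sum is $<\pi$; along each prismatic $4$-circuit the sum is a convex combination of quantities already strictly less than $2\pi$; and condition (7) at ideal vertex coincidences is handled the same way because each relevant sum in $\Theta^*$ either equals the corresponding sum in $\Theta$ or increases only on non-face edges bounded by $\pi/3$. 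At $t=1$, along each prismatic $3$-circuit the sum is exactly $\pi$, so the parallel triangular face degenerates simultaneously to an ideal vertex, the combinatorial type becomes $\widetilde{P}$, the surviving labels are all $\pi/3$, and the limiting polyhedron is $\PP'$. This yields the desired piecewise-smooth, angle-nondecreasing deformation with face degenerations concentrated at $t=1$.

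The main obstacle is ensuring that the interpolation genuinely stays inside $\whop(P)$ for all $t<1$; a secondary subtlety is the interaction at triangular faces of $P$ that are \emph{not} parallel to any prismatic $3$-circuit, if such exist. For these faces, the three faces adjacent to $T$ share a common vertex $v$, and the three non-face edges meeting at $v$ each carry a label $\leq \pi/3$ by hypothesis while summing to $\geq \pi$ by Andreev's condition (2); forcing each such label to be exactly $\pi/3$ and $v$ to be ideal already in $\PP$. Consequently the interpolation leaves these labels fixed at $\pi/3$, and the Andreev conditions at the vertices of $T$ remain satisfied throughout because the non-face contribution $\pi/3$ is constant and the face-edge labels $\Theta(e)$ are preserved; these faces simply persist as triangular faces of $\widetilde{P}$ with edges at their target values in $\PP'$ already dictated by Andreev's theorem.
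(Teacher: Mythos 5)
Your overall strategy matches the paper's intent at a high level—identify the $\pi/3$-equiangular target on the extension $\widetilde{P}$ and push labels up along a straight line—and the interpolation you build is essentially the mechanism the paper relies on implicitly via Proposition~5.1 and its linear-path argument (as in Corollary~5.3). The paper's own proof is terser: it notes $\widetilde{\PP}^{(1)} \cong P'$, that $\widetilde{\Theta}\in\whop(P')$, and then invokes Proposition~5.1 to produce the target labeling; the deformation is left implicit. So your explicit path in $\whop(P)$ is a reasonable fleshing-out rather than a genuinely different route. However, your last paragraph contains a real error, and your Andreev check leaves a gap.

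The error: you consider a triangular face $T$ of $P$ not parallel to a prismatic $3$-circuit, claim that then the three adjacent faces $F_1, F_2, F_3$ share a common vertex $v$, and then treat the three edges $f_i = F_j\cap F_k$ at $v$ as ``non-face edges'' so as to invoke the hypothesis $\Theta(f_i)\le\pi/3$. This is wrong on two counts. First, once $f_1, f_2, f_3$ all meet at $v$, each $F_i$ is bounded only by $e_i, f_j, f_k$ and is itself a triangle, so the $f_i$ \emph{are} edges of triangular faces and the hypothesis does not bound them. Second, if $v$ also had degree $3$ this configuration forces $P$ to be a tetrahedron (four triangular faces, four trivalent vertices), which is excluded; so for the polyhedra to which the corollary is meant to apply, a triangular face with trivalent vertices and a non-prismatic surrounding $3$-circuit simply cannot occur. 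Your ``fix'' is thus addressing a case that either cannot happen or is not actually fixed by the argument given. The same paragraph also asserts that the face-edge labels $\Theta(e)$ ``are preserved'' while simultaneously claiming the target values are those dictated by $\PP'$ (namely $\pi/3$); these are only consistent if $\Theta(e)=\pi/3$, which you never establish.

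A related gap sits at the start: you write that in $(\widetilde{P},\Theta^\dagger)$ ``every vertex is forced to be trivalent,'' but you never argue this. With all labels $\pi/3$ a degree-$4$ vertex has angle sum $4\pi/3\ne 2\pi$, so Andreev's condition~(3) fails and the polyhedron would not even exist (or the vertex would be hyperideal in the generalized setting). If $P$ has a degree-$4$ vertex (all four of whose labels are forced to be $\pi/2$ and hence must lie on triangular faces, as allowed by the hypothesis), that vertex survives into $\widetilde{P}$ and the target ``$\pi/3$-equiangular ideal polyhedron'' does not exist. The corollary as stated implicitly restricts to the setting where $\widetilde{P}$ is trivalent with at least $6$ faces (these are also the hypotheses of Proposition~5.1 which the paper invokes); your proof needs to say this rather than assert it. Under those implicit hypotheses the surviving-triangular-face case vanishes, your $\Theta^*$-interpolation does land on the correct labeling of $\widetilde{P}$, and the argument closes.
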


\begin{proof} First note that $\wt{\PP}^{(1)}$ is graph isomorphic to the
graph $P'$, obtained from 
$P$ by replacing all triangular faces that are parallel to prismatic
$3$--circuits by vertices.  The fact that $\Theta \in \Omega(P)$
implies that $\wt{\Theta} \in \whop(P').$  An application of
Proposition~\ref{unif}
yields the desired labeling of $P'$.  \end{proof}

A polyhedron is \textit{atoroidal} if every prismatic $4$--circuit is
parallel to a face.

\begin{corollary}\label{pi2deform} If $\PP$ is a turnover reduced and
atoroidal  hyperbolic polyhedron realizing $(P, \Theta)$ with $\Theta(e) =
\pi/2$ for any edge $e$ that is part of a triangular or rectangular face
with all degree $3$ vertices, then there exists an angle--nondecreasing
deformation with face degenerations to a right-angled polyhedron $\PP'$.  \end{corollary}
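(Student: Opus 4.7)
My plan parallels the proof of Corollary~\ref{pi3deform}, but now I must collapse both triangular and rectangular extendable faces simultaneously. First, I would define an extension $\wt{P}$ of $P$ by replacing every triangular face parallel to a prismatic $3$--circuit with a degree-$3$ vertex (as in Corollary~\ref{pi3deform}) and every rectangular face with all degree-$3$ vertices parallel to a prismatic $4$--circuit with a degree-$4$ vertex. The hypothesis forces every edge of each such collapsed face to carry label $\pi/2$, so the clean geometric relationship $\PP=\wt{\PP}^{\vee}$ holds, where $\wt{\PP}$ is a generalized hyperbolic realization of $(\wt{P},\wt{\Theta})$ with $\wt{\Theta}$ the restriction of $\Theta$ to the surviving edges. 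The new vertices of $\wt{P}$ are hyperideal (from collapsed triangles, whose three incident edge labels sum to less than $\pi$) or ideal/hyperideal (from collapsed rectangles, whose four incident labels sum to at most $2\pi$).

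Because $\PP$ is turnover reduced and atoroidal, every prismatic $3$-- or $4$--circuit in $P$ was parallel to a face that has now been collapsed, so $\wt{P}$ contains no prismatic $3$-- or $4$--circuits at all. I then claim that the all-$\pi/2$ labeling $\wt{\Theta}_{\pi/2}$ lies in $\Omega(\wt{P})$. Applying Theorem~\ref{andbb}: conditions (1) and (2) are vacuous; the link of each degree-$3$ vertex is spherical ($3\cdot\pi/2>\pi$) and each degree-$4$ vertex is Euclidean ($4\cdot\pi/2=2\pi$), so no vertex of the realization is hyperideal and the resulting polyhedron has finite volume; condition (3) requires that for any triple of faces $F_i,F_j,F_k$ in the prescribed configuration $\Theta(e_{ij})+\Theta(e_{jk})<\pi$, which needs separate verification and should follow from the absence of prismatic circuits in $\wt{P}$. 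The borderline case that $\wt{P}$ is a triangular prism is handled separately: if $P$ is already a triangular prism, the hypothesis labels every edge by $\pi/2$ and the deformation is trivial; otherwise the combined turnover reduced and atoroidal assumptions prevent the extension from degenerating to a prism.

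Finally, since $\whop(\wt{P})$ is convex by Theorem~\ref{andbb}, the straight-line homotopy $s(t)=(1-t)\wt{\Theta}+t\wt{\Theta}_{\pi/2}$ is contained in $\whop(\wt{P})$ and is coordinatewise nondecreasing because $\wt{\Theta}(e)\leq\pi/2$ for every surviving edge $e$. Truncating the corresponding generalized realizations $\wt{\PP}_{s(t)}$ yields the desired deformation of $\PP$: it is angle--nondecreasing on every surviving edge of $P$, and face degenerations occur precisely when a hyperideal vertex of $\wt{\PP}_{s(t)}$ crosses through ideal and its associated truncation face of $\PP$ collapses to an ideal vertex. At $t=1$ we obtain the right-angled polyhedron $\PP'$, realized as $\wt{\PP}_{\wt{\Theta}_{\pi/2}}$ (which has no hyperideal vertices, so no truncation is needed). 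The main obstacle I anticipate is verifying condition (3) of Theorem~\ref{andbb} for the all-$\pi/2$ labeling and excluding the triangular-prism corner case; both should follow from a careful combinatorial analysis using the decomposition algorithm of Section~\ref{algbs}.
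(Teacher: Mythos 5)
Your reconstruction matches the paper's intent exactly: the paper's proof of this corollary is the single sentence ``The argument is similar to the previous corollary,'' and what that means is precisely your plan --- extend $P$ to $\wt P$ by collapsing both the triangular faces parallel to trivial prismatic $3$--circuits and the all-degree-$3$ rectangular faces parallel to trivial prismatic $4$--circuits (this is legal because the hypothesis forces every edge of such a face to carry angle $\pi/2$, so each such face is the orthogonal truncation face of a hyperideal vertex of $\wt{\PP}$ and $\PP = \wt{\PP}^{\vee}$), observe that turnover reducedness and atoroidality kill every prismatic $3$-- and $4$--circuit in $\wt P$, and then push the restricted labeling $\wt\Theta$ along a straight line in the convex polytope $\whop(\wt P)$ to the all-$\pi/2$ labeling, finally truncating back to get the deformation of $\PP$ with face degenerations.

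Two remarks. First, a minor correction: the vertices created by collapsing are always hyperideal, never ideal. If $Q$ is a triangular or quadrilateral face with all incident edges on $Q$ labeled $\pi/2$, then the interior angle of $Q$ at each vertex equals the dihedral angle along the single edge leaving $Q$ at that vertex (spherical law of cosines with two $\pi/2$ angles); since $Q$ is a genuine compact hyperbolic polygon, these angles sum to strictly less than $\pi$ (resp.\ $2\pi$), so the link of the new vertex of $\wt P$ is strictly hyperbolic. This is also what makes $\PP=\wt{\PP}^{\vee}$ clean. Second, the obstacle you correctly flag --- condition (3) of Theorem~\ref{andbb} (equivalently condition (7) of Andreev's theorem) for the all-$\pi/2$ labeling of $\wt P$ --- is genuinely the nontrivial step, and ``absence of prismatic circuits in $\wt P$'' does not settle it by itself. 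The bad configuration $(F_i,F_j,F_k)$ with $F_i \cap F_k$ a single degree-$4$ vertex $v$ needs to be ruled out in two cases. When $v$ comes from a collapsed rectangle $R$ of $P$, one shows the $4$--circuit through $F_i^*,F_j^*,F_k^*,R^*$ in $P^*$ is prismatic and nontrivial (each side of it captures one of the remaining two neighbors of $R$), contradicting atoroidality; this part works. But when $v$ is an \emph{original} degree-$4$ vertex of $P$, the analogous $4$--circuit through $v$'s quadrilateral is never prismatic (two of its edges share $v$), so atoroidality gives you nothing directly, and you need an argument closer in spirit to Lemma~\ref{hypwhp}'s treatment of pinched triples (or an extra hypothesis). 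The paper glosses over this exactly as you do, so your proposal is faithful to the source; just be aware that ``should follow from the absence of prismatic circuits'' overstates what is known at this point in the argument.
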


\begin{proof}
The argument is similar to the previous corollary.
\end{proof}

For an abstract polyhedron $P$, define $$V \colon \whop(P) \to \RR$$ by
$V(\PP,\Theta)=\vol_{\HH^3}(\PP^{\vee},\Theta^{\vee}).$  A generalization of Milnor's
continuity conjecture by Rivin implies that $V$ is continuous on
$\whop(\PP)$ \cite{rivincont}.  Hence by Schl\"{a}fli's formula, the
deformations in Corollaries \ref{pi3deform} and \ref{pi2deform} are volume
nonincreasing.

A proof of the lower bound in Theorem~\ref{superweak} follows from
Corollary \ref{pi2deform} and a theorem from \cite{volpoly} that we restate
here for convenience:  

\begin{theorem}[\cite{volpoly}] 
\label{pi2general} If $\PP$ is a right--angled hyperbolic polyhedron,
$N_{\infty}$ ideal vertices and $N_F$ finite vertices, then 
$$\vol(\PP) \geq \frac{4N_{\infty}+N_F-8}{32} \cdot V_8.$$ 
\end{theorem}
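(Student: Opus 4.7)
The plan is to prove this lower bound by decomposing $\PP$ into atomic pieces whose volumes can be bounded below linearly in their vertex counts, and then summing. Since $\PP$ is right--angled, the angle sum at every vertex is $3\pi/2>\pi$, so by Andreev's theorem $\PP$ contains no prismatic $3$--circuits and Corollary~\ref{turnredlemma} shows $\PP$ is already turnover--reduced. I would then apply the quadrilateral decomposition of Theorem~\ref{bspolyorb} to $\QQ_{\PP}^{\perp}$, cutting $\PP$ along the right--angled quadrilaterals dual to each nontrivial Euclidean prismatic $4$--circuit. This expresses $\PP$ as a union of atoroidal right--angled pieces glued to Seifert--fibered pieces along totally geodesic quadrilaterals.

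For the Seifert--fibered components, Theorem~\ref{sfpoly} forces each to be a right--angled prism, whose volume is a known linear function of its ideal vertex count, producing the required per--vertex bound directly. For the atoroidal components, the Agol--Storm--W.~Thurston inequality (Theorem~\ref{ast}), applied to the double $D\PP$ and the incompressible family of doubled decomposition quadrilaterals, guarantees that the sum of the volumes of the rehyperbolized pieces with totally geodesic boundary is at most $\vol(\PP)$. This reduces the statement to the case of an atoroidal right--angled polyhedron. I would then induct on the number of vertices, taking as the base case the right--angled ideal octahedron with $N_\infty=6$, $N_F=0$, and volume $V_8$, for which the required bound $\tfrac{4\cdot 6-8}{32}V_8=V_8/2$ holds with room to spare. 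The inductive step would identify a combinatorial reduction --- such as collapsing a quadrilateral face or performing a local surgery at a finite vertex --- that strictly decreases $4N_\infty+N_F$ by a bounded amount while decreasing the volume by at most the corresponding amount, controlled by Schl\"{a}fli's formula.

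The main obstacle will be the accounting across the quadrilateral decomposition: each cut introduces four new right--angled edges and up to four new finite degree--$3$ vertices per component, and these additions must be tracked precisely so that the sum of the component bounds recovers the bound for $\PP$ rather than overcounting the $-8$ offset. A secondary difficulty is justifying the asymmetric weights $4/32$ for ideal versus $1/32$ for finite vertices; the natural strategy is to first reduce to the fully ideal setting by truncating every finite vertex, converting each such vertex into three new right--angled edges bounding a right--angled triangular face, and then to control the resulting volume change using Schl\"{a}fli's formula applied to the truncation deformation. Once reduced to the ideal atoroidal case, the argument should identify the right--angled ideal octahedron as the minimal building block contributing $V_8$ per six ideal vertices, giving the coefficient $4/32 = 1/8$.
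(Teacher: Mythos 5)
This statement is not proved in the present paper at all: it is quoted verbatim from \cite{volpoly} (``a theorem from \cite{volpoly} that we restate here for convenience''), so there is no internal proof to compare against. Evaluating your proposal on its own merits, there are several genuine gaps.

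The core difficulty is that the entire decomposition machinery you invoke is vacuous for the class of polyhedra covered by this theorem. If $\PP$ is right--angled, then every edge is labeled $\pi/2$, so any $3$--circuit in $P^*$ has angle sum $3\pi/2 \geq \pi$ and any $4$--circuit has angle sum $2\pi$. By conditions (4) and (5) of Andreev's theorem, a right--angled hyperbolic polyhedron therefore has \emph{no} prismatic $3$-- or $4$--circuits at all; equivalently $\QQ_{\PP}^{\perp}$ contains no incompressible spherical turnovers and no Euclidean $S^2(2,2,2,2)$ suborbifolds. Consequently the Bonahon--Siebenmann/Petronio decomposition of Theorem~\ref{bspolyorb} produces an empty collection of cutting quadrilaterals, there are no Seifert--fibered pieces (indeed, no all--right prism is hyperbolic, precisely because it would carry a prismatic $4$--circuit with angle sum $2\pi$), and the Agol--Storm--Thurston step has nothing to do. Your proposal therefore collapses immediately to the final induction, which is exactly the general case and is left as a sketch: no concrete operation is given that decreases $4N_\infty + N_F$ by a controlled amount while decreasing volume by a comparably controlled amount, and Schl\"afli's formula by itself does not supply the required edge--length bounds.

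The secondary reduction you propose also runs backwards. Truncating every finite vertex of a right--angled polyhedron is the full truncation $\widehat \PP$ of Section~\ref{upperbound}; it arises as the limit of an angle--\emph{non}increasing deformation and hence, by Schl\"afli's formula, satisfies $\vol(\widehat \PP) > \vol(\PP)$. A lower bound for the all--ideal polyhedron $\widehat\PP$ therefore says nothing about $\vol(\PP)$, and you cannot reduce the compact case to the ideal case this way. Finally, the ``base case'' of your induction is unclear: the bound in question is meant to hold across the whole family (including compact polyhedra, whose minimal example is the right--angled dodecahedron with $20$ finite vertices, not the octahedron), and the asymmetric weights $4/32$ versus $1/32$ cannot be recovered from the octahedron alone. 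In short, the proposal does not supply an argument for the theorem; it reproduces the scaffolding of Sections~\ref{algbs}--\ref{C:decomp}, which is specifically designed for polyhedra that \emph{do} have essential $3$-- and $4$--circuits, whereas the right--angled case is precisely the one where that scaffolding is empty.
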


The following
corollary is a better lower bound than that in Theorem~\ref{superweak}, that
follows by disregarding the contributions of the prismatic $3$-circuits.

\begin{corollary}
Let $\PP$ be a non--obtuse hyperbolic polyhedron containing
no prismatic $4$--circuits, $N_4$ degree $4$ vertices, $N_3$ degree $3$
vertices, and $M_3$ prismatic $3$--circuits. Then
	$$ \vol(\PP) > \frac{4N_4 + (N_3+M_3) -8}{32} \cdot V_8.$$
\end{corollary}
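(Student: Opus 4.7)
The plan is to reduce to the right-angled volume bound by extending $\PP$ to introduce a new degree-$3$ vertex at each prismatic $3$-circuit, deforming to a right-angled polyhedron, and applying Theorem~\ref{pi2general} to the result.

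First, I would define the combinatorial extension $\widetilde{P}$ of $P$ by collapsing each of the $M_3$ prismatic $3$-circuits to a single new degree-$3$ vertex. Then $\widetilde{P}$ has $N_4$ degree-$4$ vertices and $N_3 + M_3$ degree-$3$ vertices, no prismatic $3$-circuits (they have been absorbed into the new vertices), and no prismatic $4$-circuits (since $P$ has none and none are created by the extension). By Andreev's theorem applied to the all-right-angle labeling of $\widetilde{P}$, there exists a finite-volume right-angled hyperbolic polyhedron $\widetilde{\PP}^{\mathrm{ra}}$ realizing this labeling.

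Second, I would construct an angle-nondecreasing deformation (with face degenerations) from $\PP$ to $\widetilde{\PP}^{\mathrm{ra}}$. At each prismatic $3$-circuit the three dihedral angles originally sum to less than $\pi$ by Andreev's theorem. As the deformation gradually increases these angles, the circuit tightens; when the angle sum reaches $\pi$ the circuit collapses to an ideal degree-$3$ vertex, and continuing the deformation past this event makes each of the three contributing angles equal to $\pi/2$ at a now finite vertex. Running this procedure at all $M_3$ prismatic $3$-circuits simultaneously and increasing all other dihedral angles to $\pi/2$ produces the desired deformation. By Schl\"{a}fli's formula together with Rivin's continuity of the volume function across combinatorial degenerations \cite{rivincont}, the deformation is strictly volume-decreasing, so $\vol(\PP) > \vol(\widetilde{\PP}^{\mathrm{ra}})$. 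Then applying Theorem~\ref{pi2general} to $\widetilde{\PP}^{\mathrm{ra}}$ and using that degree-$3$ vertices of right-angled polyhedra are finite while degree-$4$ vertices are ideal yields
$$\vol(\widetilde{\PP}^{\mathrm{ra}}) \geq \frac{4N_4 + (N_3 + M_3) - 8}{32} \cdot V_8,$$
which combined with the previous strict inequality gives the claim.

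The main obstacle is rigorously justifying the extension and the deformation past the face-degeneration events. When $\PP$ does not already possess a triangular face parallel to each prismatic $3$-circuit (the setting of the extension construction preceding Corollary~\ref{pi3deform}), one can first apply Corollary~\ref{turnredlemma} to cut $\PP$ along the turnovers corresponding to its prismatic $3$-circuits, producing such triangular faces in the cut components, and then perform the extension componentwise before reassembling the bookkeeping. Showing that each prismatic $3$-circuit contributes exactly $+1$ to the effective finite-vertex count in the combined right-angled polyhedron, and that the continuity of volume through the collapsing event is sharp enough to preserve the strict Schl\"{a}fli inequality, is the delicate step.
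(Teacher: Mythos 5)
Your overall strategy---absorb each prismatic $3$-circuit into a single new degree-$3$ vertex, deform the resulting polyhedron to a right-angled one, and then apply Theorem~\ref{pi2general}---is the same as the paper's, which simply cites Corollary~\ref{pi2deform} and Theorem~\ref{pi2general}. But the crucial arithmetic step, that your $\widetilde P$ has $N_3+M_3$ degree-$3$ vertices, is not what your described construction produces, and you correctly flag this as the delicate point without closing it.

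Concretely: a trivial prismatic $3$-circuit is parallel to a triangular face $T$ whose three vertices all have degree $3$. The ``extension'' defined in Section~\ref{deformation} replaces $T$ and its three vertices by a single new degree-$3$ vertex, so each such circuit nets $-2$ degree-$3$ vertices, not $+1$; starting from a turnover-reduced $\PP$ this yields $N_3 - 2M_3$ rather than $N_3 + M_3$. If instead the $3$-circuit is nontrivial, collapsing it to a point pinches $S^2$ and disconnects the polyhedron rather than producing a single abstract polyhedron $\widetilde P$. The alternative you gesture at---cut along turnovers via Corollary~\ref{turnredlemma}, extend each piece, and sum the bounds---gives $\sum_i \frac{4N_4^{(i)} + N_3^{(i)} -8}{32}V_8$ over $M_3+1$ pieces; since cutting adds one new vertex to each side of each turnover, the vertex totals are $N_3+2M_3$ and $N_4$, but the $-8$ is charged $M_3+1$ times, yielding $\frac{4N_4 + N_3 - 6M_3 - 8}{32}V_8$, which is weaker than the claim. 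So neither mechanism you propose delivers the stated exponent on $M_3$. Your instinct that ``each prismatic $3$-circuit contributes exactly $+1$'' needs an actual argument that you have not supplied, and the one-line deformation you sketch (collapse $3$-circuits, push everything to $\pi/2$) gives the opposite sign.

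There is a second, milder issue: even the single-polyhedron extension route can fail outright when $\widetilde P$ has fewer than $6$ faces (e.g.\ for a once-truncated triangular prism, $\widetilde P$ is a triangular prism, whose all-$\pi/2$ labeling violates Andreev's condition~(6)), so Proposition~\ref{unif} and hence Corollary~\ref{pi2deform} do not apply. A complete proof must address such small cases separately.
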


\begin{proof}
If $\PP$ contains no prismatic $4$-circuits, then by
Corollary~\ref{pi2deform}, there exists a volume--decreasing deformation from
$\PP$ to a right--angled polyhedron with $N_4$ degree--$4$ vertices and
$N_3+M_3$ degree $3$ vertices.  Theorem 2.4 in \cite{volpoly} gives the
conclusion.
\end{proof}

\section{On hyperbolic prisms and their volumes}\label{C:prisms}

In this section a lower bound on the volume of a hyperbolic Coxeter prism is
produced by exhibiting the minimal volume Coxeter $n$--prism for each $n\geq
5$.
This lower bound does not extend to the full non--obtuse case, but provides a
lower bound for any non--obtuse prism having no dihedral angles in the
interval $(\pi/3,\pi/2)$. The final subsection in this section gives a lower
bound on the volume of the components of a hyperbolic Coxeter polyhedron
$\PP$ that correspond to the Seifert--fibered components coming from the
decomposition of $\QQ_{\PP}^{\perp}$ given by Theorems~\ref{kmppolyorb} and
\ref{bspolyorb}.  Again, the results of the final subsection extend to the
case of non--obtuse prisms with no dihedral angles in the interval $(\pi/3,
\pi/2)$.

An \textit{$n$--prism} is a non--obtuse hyperbolic polyhedron consisting of $2$
disjoint $n$--gon faces and $n$ quadrilateral faces as shown in
Figure~\ref{prism}.  Label the edges of one of the $n$--gon faces cyclically by
$a_1, a_2 \dots a_n$ and the edges of the other $n$--gon by $b_1,b_2 \dots
b_n$ so that $a_i$ and $b_i$ are edges of the same quadrilateral face.  Label
the remaining $n$ edges by $c_1,c_2 \dots c_n$ so that $c_i$ is an edge of
the quadrilateral faces containing $a_i$ and $a_{i+1}$, where the labeling is
taken modulo $n$. See Figure~\ref{prism}.   Label the dihedral angles along
the edges $a_i$, $b_i$ and $c_i$ by $\aa_i$, $\bb_i$ and $\cc_i$,
respectively.  

\begin{figure}
\labellist
\small\hair 2pt
\pinlabel $a_1$ [br] at 48 130
\pinlabel $a_2$ [bl] at 27 91
\pinlabel $a_3$ [b] at 84 61
\pinlabel $a_4$ [br] at 155 69
\pinlabel $a_5$ [l] at 183 111
\pinlabel $a_6$ [b] at 128 142
\pinlabel $b_1$ [tl] at 54 85
\pinlabel $b_2$ [tr] at 24 45
\pinlabel $b_3$ [t] at 80 12
\pinlabel $b_4$ [tl] at 163 23
\pinlabel $b_5$ [tr] at 182 64
\pinlabel $b_6$ [t] at 122 93
\pinlabel $c_1$ [r] at 14 87
\pinlabel $c_2$ [l] at 38 43
\pinlabel $c_3$ [r] at 126 32
\pinlabel $c_4$ [l] at 194 64
\pinlabel $c_5$ [r] at 171 109
\pinlabel $c_6$ [l] at 82 122
\endlabellist
\begin{center}
\scalebox{.85}{\includegraphics{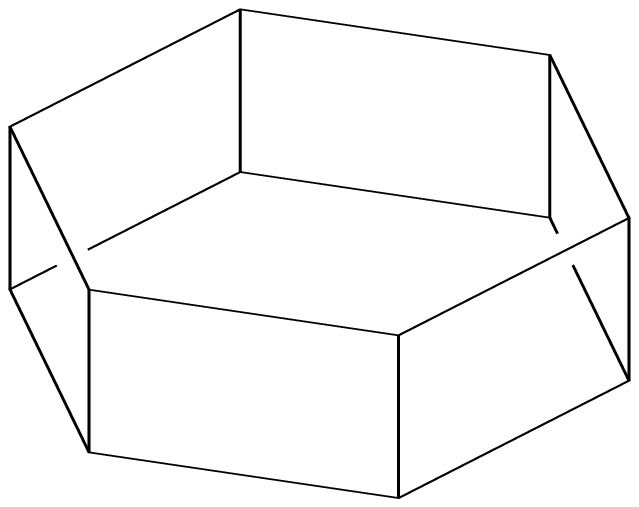}}
\end{center}
\caption{The $1$--skeleton of a hyperbolic $6$--prism}
\label{prism}
\end{figure}

Define $D_n$ to be the abstract $n$--prism.  By Andreev's theorem, the space
$\Omega(D_n)$ is naturally
parameterized as a convex polytope in $\RR^{3n}$ with coordinates given by
dihedral angles.  Define $O(D_n) \subset \Omega(D_n)$ to be the set of
labelings of the abstract
$n$--prism for which all dihedral angles are of the form $\pi/p$ for $p \in \ZZ$.
The elements of $\O(D_n)$ are realized by polyhedra that give discrete reflection
groups, so correspond to hyperbolic $3$--orbifolds and will be referred to as
\textit{Coxeter $n$--prisms}.

\subsection{Basic prisms}

Coxeter prisms were described completely by Derevnin and Kim in Theorem 5 of
\cite{derkim}.  The following lemma was discovered independently.

\begin{lemma}\label{redrep}
For any prism $\DD \in O(D_n)$, $n\geq 5$, there exists an angle--nondecreasing
deformation through prisms $\DD_t \in \Omega(D_n)$ with dihedral angles
$\aa_i(t),\,
\bb_i(t),$ and $\cc_i(t)$ from $\DD=\DD_0$ to  $\DD_1\in O(D_n)$ with
dihedral angles satisfying the following
properties, up to cyclic permutation of the indices:
\begin{enumerate}
	\item $\cc_1(1) = \cc_2(1) = \dots = \cc_n(1) = \pi/2$
	\item $\aa_1(1)=\bb_1(1)=\aa_2(1)=\bb_2(1)=\pi/2$
	\item For each $i$, $3 \leq i \leq n$, $(\aa_i(1),\bb_i(1))=(\pi/2,
	\pi/3)$ or $(\aa_i(1),\bb_i(1))=(\pi/3, \pi/2).$
\end{enumerate}
Furthermore, $\vol(\DD_0) \geq \vol(\DD_1)$.
\end{lemma}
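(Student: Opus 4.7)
The strategy is to realize the deformation as the straight-line path $\Theta_t = (1-t)\Theta_0 + t\Theta_1$ in angle coordinates from the labeling $\Theta_0$ of $\DD_0$ to a Coxeter labeling $\Theta_1$ chosen so that $\Theta_0 \le \Theta_1$ componentwise. Because the defining inequalities of $\Omega(D_n)$ coming from Andreev's theorem are linear in the angle coordinates, $\Omega(D_n)$ is convex, so the interpolants $\Theta_t$ automatically lie in $\Omega(D_n)$ once both endpoints do. Componentwise domination makes the path angle-nondecreasing, and the volume inequality $\vol(\DD_0) \ge \vol(\DD_1)$ then follows from Schl\"afli's formula exactly as in the proof of Corollary~\ref{aicor}. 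The entire content of the argument is therefore to produce a valid Coxeter target $\Theta_1$ that dominates $\Theta_0$.

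The key preliminary is an analysis of the set $S = \{i \, : \, \aa_i + \bb_i \ge \pi\}$. For $n \ge 5$ the dual graph $D_n^*$ has no prismatic $3$-circuits, and its prismatic $4$-circuits are exactly the cycles $T\text{-}F_i\text{-}B\text{-}F_j\text{-}T$ for cyclically non-adjacent pairs $i,j$; Andreev condition~(5) then reads $\aa_i + \bb_i + \aa_j + \bb_j < 2\pi$ for every such pair. Consequently any two members of $S$ must be cyclically adjacent, and since adjacency on a cycle of length $n \ge 4$ admits no triangles, $|S| \le 2$ with the two members (if any) consecutive. After a cyclic relabeling I may assume $S \subseteq \{1,2\}$. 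Because the labeling is Coxeter, every angle is either $\pi/2$ or at most $\pi/3$, so the strict inequality $\aa_i + \bb_i < \pi$ for $i \ge 3$ forces at most one of $\aa_i, \bb_i$ to equal $\pi/2$ and the other to be at most $\pi/3$.

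With these observations in hand I define $\Theta_1$ by setting $\cc_i(1) = \pi/2$ for every $i$, $\aa_1(1) = \bb_1(1) = \aa_2(1) = \bb_2(1) = \pi/2$, and for each $i \ge 3$ sending whichever of $\aa_i, \bb_i$ equals $\pi/2$ in $\DD_0$ to $\pi/2$ and the other to $\pi/3$ (with a free choice if both are $\le \pi/3$). By construction $\Theta_0 \le \Theta_1$ coordinate by coordinate. To check $\Theta_t \in \Omega(D_n)$ it suffices to verify Andreev conditions~(2) and~(5), since all other conditions in Theorem~\ref{and} are vacuous for a prism with $n \ge 5$; condition~(2) is a linear inequality $\ge \pi$ that holds at $\Theta_1$ (the minimum trivalent sum is $\pi/3 + \pi/3 + \pi/2 = 7\pi/6$) and at $\Theta_0$ by hypothesis, while condition~(5) is the strict inequality $< 2\pi$ and at $\Theta_1$ the largest prismatic $4$-circuit sum is $\pi + 5\pi/6 = 11\pi/6$; both are preserved by convex combinations of labelings satisfying them. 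The main obstacle I anticipate is the structural step $|S| \le 2$ with $S$ cyclically consecutive, since this is the one nontrivial use of the prismatic $4$-circuit condition and is exactly what permits a dominating Coxeter target to exist; everything else is routine linear bookkeeping in the angle-coordinate polytope.
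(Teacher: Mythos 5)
Your proof is correct and takes essentially the same route as the paper's: both hinge on the observation that the prismatic $4$-circuit condition (Andreev condition (5)) forces at most two indices $i$ with $\aa_i=\bb_i=\pi/2$, and forces them to be cyclically consecutive, after which one increases all other angles to the target Coxeter labeling. The only difference is packaging: the paper describes the deformation as a sequence of individual angle adjustments, while you construct the dominating target $\Theta_1$ up front and appeal to convexity of $\Omega(D_n)$ to run a single straight-line path; your version is cleaner about exactly which Andreev conditions need checking.
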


\begin{proof}
Let $\DD \in O(D_n)$, $n>4$ with dihedral angles $\aa_i, \bb_i$, and $\cc_i$ as
above.  There are no prismatic $3$--circuits in $\DD$, so the only restrictions placed on $\DD$ by Andreev's
theorem, are that link of each vertex of $\DD$ is either a  Euclidean or
spherical triangle and for each pair $(i,j)$ with $1 \leq i\neq j \leq n$ with
$i\neq j\pm 1$, modulo $n$,
$\aa_i + \aa_j + \bb_i + \bb_j < 2 \pi$.  Condition (1) of the lemma follows
since increasing the $\cc_i$ to $\pi/2$ will increase the angle sum of the
link of each vertex, so will satisfy Andreev's theorem throughout the
deformation.   

By the fifth condition of Andreev's theorem, there are at most two
pairs $(\aa_i,\bb_i)=(\pi/2,\pi/2)$.  If there are two such pairs, they must be
adjacent, so without loss of generality, we may assume
$(\aa_1,\bb_1)=(\aa_2,\bb_2)=(\pi/2,\pi/2).$  If there is only one such pair, we
may assume that it is $(\aa_1,\bb_1)$.  Then, $(\aa_2,\bb_2)$ may be deformed to
$(\pi/2,\pi/2)$.  If there are no such pairs, then $(\aa_1,\bb_1)$ and
$(\aa_2,\bb_2)$ may be deformed to $(\pi/2,\pi/2)$.  This gives condition
(2).

After completing the deformations to satisfy (1) and (2), for each $i = 3, \dots
n$, at most $1$ of $\aa_i$ and $\bb_i$ is $\pi/2$.  If one of $\aa_i$ or $\bb_i$
is $\pi/2$, then the other is less than or equal to $\pi/3$, so may be increased
to $\pi/3$.  If neither $\aa_i$ nor $\bb_i$ is $\pi/2$, then the pair can
be increased to $(\pi/2, \pi/3)$.  This yields $\DD_1$ as described.

The deformations are all angle--increasing, so by Schl\"{a}fli's formula,
$\vol(\DD_0) \geq \vol(\DD_1).$ 
\end{proof}

An $n$--prism that satisfies the conclusion of Lemma~\ref{redrep} will be
referred to as a \textit{basic $n$--prism}. Define an \textit{alternating
$n$--prism} to be a basic $n$--prism  for which $\aa_3=\bb_4=\aa_5= \dots = \aa_n$
or $\bb_n$ if $n$ is odd or even, respectively.  Note that for each $n$,
there is only one alternating $n$--prism in $O(D_n)$ up to isometry.  It will
be shown in Section~\ref{S:altmin} that the alternating $n$--prism is the
Coxeter $n$--prism of smallest volume.

\subsection{Cubes} \label{S:cubes}  In this section, we analyze the geometry
of two types of $4$--prisms into which any basic prism may be decomposed.  We
prove three technical lemmas that will be used to identify the $n$--prism of
minimal volume.

For $\mu \in [0,\pi/2)$, define $C_1(\mu)$ to be the $4$--prism with
$\aa_3=\bb_4=\pi/3$, $\cc_1=\mu$, and all other dihedral angles equal to
$\pi/2$.  A cube such as $C_1$ where all dihedral angles are $\pi/2$ except for
$\aa_3$, $\bb_4$, and $\cc_1$ is known as a \textit{Lambert cube.}  The angles
$\aa_3$, $\bb_4$, and $\cc_1$ are the \textit{essential angles} of the Lambert
cube.  Define
$C_2(\mu)$ to be the $4$--prism with $\aa_3=\aa_4=\pi/3$, $\cc_1=\mu$, and all
other dihedral angles equal to $\pi/2$.   For $i=1,2$, let
$\rho_i(\mu)$ be the hyperbolic length of the edge  having dihedral angle $\mu$.  See
Figure~\ref{fig:cubes}.  Define $V_i(\mu) = \vol(C_i(\mu))$.

\begin{figure}
\labellist
\small\hair 2 pt
\pinlabel $\frac{\pi}{3}$ [b] at 95 62
\pinlabel $\frac{\pi}{3}$ [tl] at 149 25
\pinlabel $\frac{\pi}{3}$ [b] at 328 62
\pinlabel $\frac{\pi}{3}$ [br] at 378 83
\pinlabel $\rho_1(\mu)$ [r] at 3 95
\pinlabel $\rho_2(\mu)$ [r] at 235 95
\pinlabel $\mu$ [l] at 26 93
\pinlabel $\mu$ [l] at 258 93
\pinlabel $C_1(\mu)$ [t] at 93 -10
\pinlabel $C_2(\mu)$ [t] at 331 -10
\endlabellist
\begin{center}
\scalebox{.85}{\includegraphics{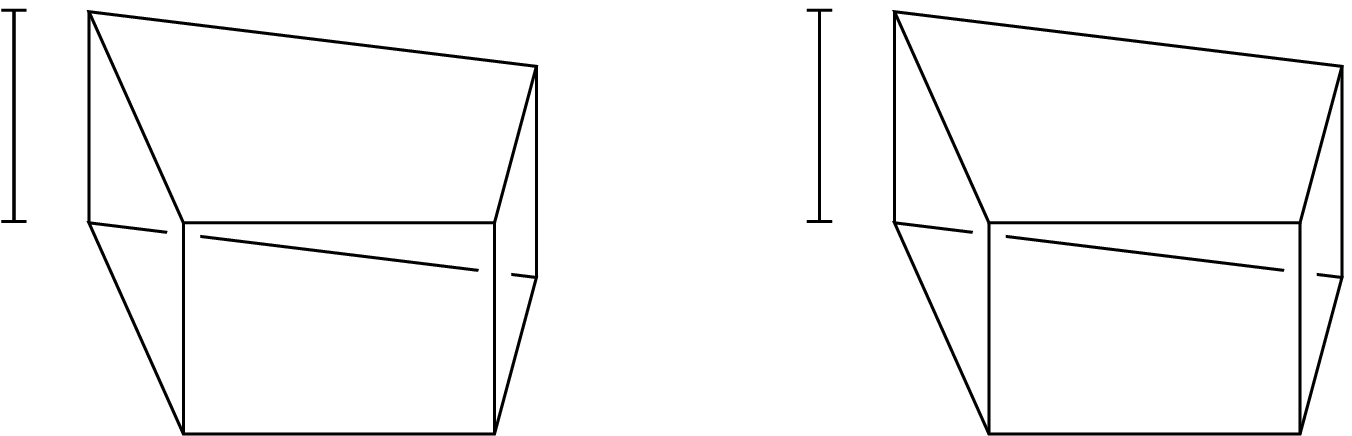}}
\end{center}
\caption{$C_1(\mu)$ and $C_2(\mu)$}
\label{fig:cubes}
\end{figure}

The following lemma shows that $\rho_i(\mu)$ is determined by $\mu$.

\begin{lemma}\label{gram}
Let $\mu \in [0,\pi/2)$.  Then, 
$$\cosh(\rho_1(\mu)) = 
\sqrt{\frac{1+ 24 \cos^2{\mu} + \sqrt{1+ 48 \cos^2{\mu}}}{32\cos^2{\mu}}},$$
and
$$\cosh(\rho_2(\mu)) = 
\sqrt{\frac{3\cos{\mu} + 1}{4\cos{\mu}}}.$$
\end{lemma}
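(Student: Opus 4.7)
The plan is to compute each $\cosh(\rho_i(\mu))$ via the Gram matrix of $C_i(\mu)$ in the Lorentzian model of $\HH^3$. Viewing each face as the boundary of a half--space in $\RR^{3,1}$ with unit spacelike outward normal $\nu_\bullet$, the $6\times 6$ Gram matrix $G$ of these six normals has rank $4$, so every $5\times 5$ minor of $G$ vanishes. The diagonal entries of $G$ equal $1$, and the off--diagonal entry for a pair of faces equals $-\cos\theta$ if the two faces meet at dihedral angle $\theta$ and $-\cosh d$ if the two faces are disjoint with common perpendicular of length $d$.

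The key geometric observation is that in both $C_1(\mu)$ and $C_2(\mu)$, all four angles $\alpha_1,\alpha_2,\beta_1,\beta_2$ at the endpoints of the edge $c_1$ equal $\pi/2$. Hence the top face $T$ meets both side faces $F_1$ and $F_2$ adjacent to $c_1$ orthogonally, as does the bottom face $B$. Consequently, the tangent vector to $c_1$ at its top endpoint is orthogonal to both $\nu_{F_1}$ and $\nu_{F_2}$ and so is parallel to $\nu_T$; the analogous statement holds at the bottom endpoint. Thus $c_1$ is the common perpendicular geodesic from $T$ to $B$, and
\begin{equation*}
\cosh(\rho_i(\mu)) \;=\; \cosh d_{\HH^3}(T,B) \;=\; -\langle \nu_T,\nu_B\rangle,
\end{equation*}
reducing the problem to computing the Gram entry $w := \langle \nu_T,\nu_B\rangle$.

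Order the faces as $T,B,F_1,F_2,F_3,F_4$ and write out $G$. Every entry is determined by the prescribed dihedral angles except for $w$ and the two Gram entries $u := \langle \nu_{F_1},\nu_{F_3}\rangle$ and $v := \langle \nu_{F_2},\nu_{F_4}\rangle$ corresponding to the two pairs of disjoint opposite side faces. In both cubes there is an order--two combinatorial symmetry that swaps $F_1\leftrightarrow F_2$ and $F_3\leftrightarrow F_4$ (combined with $T\leftrightarrow B$ in $C_1(\mu)$, where the $\pi/3$ labels are antipodal) and preserves every dihedral angle, forcing $u=v$. For $C_2(\mu)$ the bottom face $B$ meets every side face at a right angle, producing many zeros in $G$, and the vanishing of an appropriate $5\times 5$ minor solves cleanly for $w$ to yield $\cosh(\rho_2(\mu))=\sqrt{(3\cos\mu+1)/(4\cos\mu)}$. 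For $C_1(\mu)$ the $\pi/3$ labels appear on both $T$ (along $\alpha_3$) and $B$ (along $\beta_4$), so $G$ has additional nonzero entries; eliminating $u$ from two independent $5\times 5$ minor relations produces a quadratic equation in $w^2$, whose positive root gives the nested--radical formula for $\cosh(\rho_1(\mu))$.

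The main obstacle is the algebraic elimination for $C_1(\mu)$. The computation is tractable because most entries of $G$ remain zero (corresponding to the right--angled pairs of faces), which lets one expand the chosen $5\times 5$ minors along zero columns and obtain manageable polynomials. The discriminant of the resulting quadratic in $w^2$ is precisely the expression $1+48\cos^2\mu$ that appears under the inner square root of the stated formula, and positivity of $\cosh(\rho_1(\mu))$ selects the indicated branch.
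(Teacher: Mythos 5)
Your proposal follows the same Gram--matrix route as the paper: identify $\cosh(\rho_i(\mu))$ with the Gram entry $-\langle\nu_T,\nu_B\rangle$ (which the paper asserts without the careful justification you give that $c_1$ is the common perpendicular of $T$ and $B$ because the four adjacent dihedral angles are $\pi/2$), then extract the formulas from the vanishing of $5\times 5$ minors of the rank--$4$ Gram matrix. The paper solves a system in three unknowns $x_1,x_2,x_3$ directly rather than invoking the order--two symmetry to force $u=v$ first, but this is a minor algebraic shortcut rather than a different method, and the resulting quadratic and discriminant agree.
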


\begin{proof}
We will work in the Lobachevsky model of $\HH^3$ in this proof.
Consider the Gram matrix $G(C_1(\mu))$ for $C_1(\mu)$.  Recall that for a
polyhedron $\PP$,
$$G(\PP) = \left[ w_i \cdot w_j \right],$$ 
where $i,\, j \in \{1,\, \dots, \, |\text{Faces}(\PP)|\}$, 
$w_i$ is the outward unit normal vector to the face $F_i$ of $\PP$ and the inner product is
defined by
\begin{equation}
	w_i \cdot w_j=
		\begin{cases}
		1		  & \text{if } i=j, \\
		-\cos{\theta_{ij}} & \text{if }F_i \text{ and } F_j \text{ meet
			with dihedral angle }\theta_{ij},\text{ or} \\
		-\cosh{d(F_i,F_k)} & \text{if } F_i \cap F_j = \emptyset.
		\end{cases}
\end{equation}

Let $F_1$ be the face bounded by the edges $a_i$, $i=1,\,\dots,\,4$, $F_2$ the face
bounded by $b_i$, $i=1,\, \dots,\, 4$, $F_3$ and $F_4$ the faces containing the
edge $c_1$, and $F_5$ and $F_6$ the remaining faces chosen so that $F_3 \cap F_5
= \emptyset$ and $F_4 \cap F_6 = \emptyset$.  Set $x_1 =
-\cosh{d(F_1,F_2)},$ $x_2=-\cosh{d(F_3,F_5)}$, $x_3 = -\cosh{d(F_4,F_6)},$ and
$m=-\cos{\mu}$.  Note that $x_1 = -\cosh(\rho_1(\mu))$.  The Gram matrix for
$C_1(\mu)$ then is given by 
$$G(C_1(\mu)) = 
\left[	
\begin{array}{cccccc}
1 & x_1 & 0 & -\frac{1}{2} & 0 & 0 \\
x_1 & 1 & 0 & 0 & -\frac{1}{2} & 0 \\
0 & 0 & 1 & 0 & x_2 & m \\
-\frac{1}{2} & 0 & 0 & 1 & 0 & x_3 \\
0 & -\frac{1}{2} & x_2 & 0 & 1 & 0 \\
0 & 0 & m & x_3 & 0 & 1  
\end{array}
\right].$$

Since the vectors $w_i$ are in $\RR^{3,1}$, this matrix can have at most rank
$4$.  Hence, deleting any row and column leaves a $5$ by $5$ matrix with
determinant equal to $0$.  Deleting the second row and column gives the equation
$$x_2^2 x_3^2 -x_2^2- \frac{3}{4} x_3^2 + \frac{3}{4}(1 - m^2 ) = 0.$$  
Repeating for the fifth row and column and for the sixth row and column yields
$$x_{1}^2 x_3^2 + (m^2-1)x_{1}^2 -x_3^2+ \frac{3}{4}(1 - m^2 ) = 0$$
and
$$x_{1}^2 x_2^2 - x_{1}^2 - \frac{3}{4}x_2^2 + \frac{9}{16} = 0.$$

This system of equations is guaranteed a unique solution with $x_1,\, x_2,\, x_3<0$ by
Andreev's theorem.  A computation shows that there is
such a solution with 
$$x_1 = -\sqrt{\frac{24 \cos^2{\mu} +1 + \sqrt{1+ 48 \cos^2{\mu}}}{32\cos^2{\mu}}},$$ which
completes the proof of the first half of the lemma.  The second claim of the
lemma is proved via identical methods.
\end{proof}

The next lemma exhibits the relationship between the volume of $C_1(\mu)$ and
$C_2(\mu)$.

\begin{lemma}\label{cubelemma}
For all $\mu \in [0,\pi/2)$, $V_1(\mu)<V_2(\mu)$.
\end{lemma}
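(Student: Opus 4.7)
The plan is to use Schl\"{a}fli's formula to control the difference $V_2(\mu)-V_1(\mu)$ via the edge-length comparison provided by Lemma~\ref{gram}, and then to handle the degenerate endpoint $\mu=0$ by a separate computation. In each $C_i(\mu)$ the only dihedral angle that varies with $\mu$ is $\cc_1=\mu$, so Schl\"{a}fli's formula yields
\[
\frac{dV_i}{d\mu} = -\tfrac{1}{2}\,\rho_i(\mu), \qquad i=1,2,
\]
and therefore
\[
\frac{d}{d\mu}\bigl(V_2(\mu)-V_1(\mu)\bigr) \;=\; \tfrac{1}{2}\bigl(\rho_1(\mu)-\rho_2(\mu)\bigr).
\]

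Next I would compare $\rho_1(\mu)$ and $\rho_2(\mu)$ directly from Lemma~\ref{gram}. Writing the two squared hyperbolic cosines over the common denominator $32\cos^2\mu$ gives
\[
\cosh^2\rho_1(\mu)-\cosh^2\rho_2(\mu) \;=\; \frac{1-8\cos\mu+\sqrt{1+48\cos^2\mu}}{32\cos^2\mu}.
\]
Setting $c=\cos\mu\in(0,1]$, nonnegativity of the numerator is equivalent to $\sqrt{1+48c^2}\geq 8c-1$, which is trivial for $c<1/8$; for $c\geq 1/8$ squaring reduces it to $16c(1-c)\geq 0$, strict on $(0,1)$ with equality only at $c=1$. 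Hence $\rho_1(\mu)>\rho_2(\mu)$ strictly for $\mu\in(0,\pi/2)$, and so $V_2-V_1$ is strictly increasing on $(0,\pi/2)$.

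It remains to establish the base inequality $V_1(0)<V_2(0)$; combined with the strict monotonicity and with the continuity of volume (Rivin's extension \cite{rivincont} of Milnor's continuity conjecture), this will yield $V_1(\mu)<V_2(\mu)$ throughout $[0,\pi/2)$. At $\mu=0$ the two formerly finite endpoints of the edge $c_1$ collapse into a single degree--$4$ ideal vertex in each of $C_1(0)$ and $C_2(0)$, producing two explicit ideal polyhedra on seven vertices with distinct combinatorial placement of the essential $\pi/3$ angles. I would compute their volumes directly using Kellerhals's closed-form Lobachevsky-function volume formula for Lambert cubes \cite{kellerhals} (and an analogous decomposition for the $C_2$-type cube), then compare the resulting expressions numerically.

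The main obstacle is precisely this base case at $\mu=0$. The Schl\"{a}fli argument is clean on the open interval but degenerates to an equality of edge lengths at $\mu=0$, so strict inequality of volumes at the endpoint cannot be extracted from the derivative comparison alone; it has to be verified by an explicit volume computation on the two limiting ideal polyhedra, and confirming that $V_1(0)$ is strictly (not merely weakly) less than $V_2(0)$ is the delicate point.
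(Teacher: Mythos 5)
Your proposal follows essentially the same route as the paper: integrate Schl\"{a}fli along the deformation $\cc_1 = t$, reduce $V_2 - V_1$ to a base value plus $\tfrac{1}{2}\int (\rho_1 - \rho_2)$, show $\rho_1 \geq \rho_2$ from the Gram-matrix formulas of Lemma~\ref{gram}, and check the base case $V_1(0) < V_2(0)$ numerically. Your explicit verification that
\[
\cosh^2\rho_1(\mu)-\cosh^2\rho_2(\mu) = \frac{1-8\cos\mu+\sqrt{1+48\cos^2\mu}}{32\cos^2\mu},
\]
reducing to $16c(1-c)>0$, is a genuine improvement in transparency over the paper's bare ``a direct calculation shows.'' Two small caveats on the base case: the limiting polyhedra $C_i(0)$ are not ideal polyhedra (only the single vertex created by the collapse of $c_1$ is ideal; the remaining six are finite), and $C_2$ is \emph{not} a Lambert cube (its two $\pi/3$ edges are adjacent), so Kellerhals's Lambert-cube formula does not apply to $V_2(0)$. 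The paper instead computes $V_2(0)$ by doubling an explicit region in the upper half-space model and integrating the hyperbolic volume form directly, obtaining $V_2(0)\approx .50192$ against $V_1(0)\approx .44446$; you would need to do something of that kind rather than ``an analogous decomposition.''
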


\begin{proof}
For $i=1,2$, let $\Phi_i : [0,\mu] \to
O(D_4)$ be a smooth
deformation from $C_i(0)$ to $C_i(\mu)$ so that $\cc_1(t)=t$.  Then, by
Schl\"{a}fli's formula, $$V_i(\mu)-V_i(0) = -\frac{1}{2} \int_0^{\mu}
\rho_i(t)\, dt,$$
for $i=1,2$.
Subtracting $V_1(\mu)$ from $V_2(\mu)$ gives
$$V_2(\mu)-V_1(\mu) = (V_2(0) - V_1(0)) + \frac{1}{2} \int_{0}^{\mu}
(\rho_1(t)-\rho_2(t))\, dt.$$
To show that $V_1(\mu)<V_2(\mu)$, it suffices to show that $V_1(0) < V_2(0)$ and
that $\rho_2(t) \leq \rho_1(t)$ for all $t \in [0,\mu]$.

If $C$ is a $4$--prism with $0\leq \aa_3<\pi/2$, $0 \leq \bb_4 < \pi/2$,
$\cc_1=0$, and all other dihedral angles $\pi/2$, then a special case of a
result of Kellerhals in \cite{kellerhals} gives

$$\vol(C)=\frac{1}{4} \bigg( \Lambda(\aa_3 + \theta) - \Lambda(\aa_3 - \theta)
+\Lambda(\bb_4 + \theta) - \Lambda(\bb_4 - \theta) +4
\Lambda\left(\frac{\pi}{2} - \theta \right) \bigg),$$
where
$$0<\theta=\arctan{\frac{\sqrt{1-\sin^2{\aa_3}
\sin^2{\bb_4}}}{\cos{\aa_3}\cos{\bb_4}}},$$
and $\Lambda$ is the Lobachevsky function.  Recall that the Lobachevsky
function is defined by 
$$\Lambda(\theta) = - \int_0^{\theta} \log |2\sin t| \, dt.$$
Using this formula, $V_1(0)$ can be calculated and is approximately $.44446$.	

In the upper half--space model for $\HH^3$, $C_2(0)$ may be obtained by gluing
together two copies of the polyhedron  bounded by the planes $x=0$, $y=0$,
$x=\sqrt{2}/2$, $y = 1/2$ and bounded below by the unit hemisphere centered
at the origin. By explicitly calculating the integral of the hyperbolic volume
form over this polyhedron, it is seen that $V_2(0)$ is approximately $.50192$.  
Hence, $V_1(0)<V_2(0)$.

Using the formulas derived in Lemma~\ref{gram}, a direct calculation shows that
$\rho_1(t) \geq \rho_2(t)$ for all $t\in [0,\pi/2)$, which completes the proof.
\end{proof}

Finally, we show that the function $V_1$ is convex.

\begin{lemma}\label{convex}
The function $V_1(t) = \vol(C_1(t))$ is convex for $t \in [0,\pi/2]$.
\end{lemma}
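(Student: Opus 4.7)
The plan is to apply Schl\"{a}fli's formula to reduce convexity of $V_1$ to monotonicity of the edge-length function $\rho_1(t)$, and then extract that monotonicity directly from the closed form in Lemma~\ref{gram}. In the one-parameter family $C_1(t)$ only the dihedral angle $\cc_1 = t$ varies; every other dihedral angle is held fixed at $\pi/2$ or $\pi/3$. Schl\"{a}fli's formula therefore collapses to
\[
V_1'(t) \;=\; -\tfrac{1}{2}\rho_1(t),
\qquad
V_1''(t) \;=\; -\tfrac{1}{2}\rho_1'(t),
\]
so the claim reduces to showing that $\rho_1$ is monotone (of the sign forced by the convexity assertion) on $[0,\pi/2]$.

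Next, Lemma~\ref{gram} supplies the closed form $\cosh^2(\rho_1(t)) = f(\cos^2 t)$, where
\[
f(u) \;=\; \frac{1 + 24u + \sqrt{1 + 48u}}{32u}.
\]
Because $\cosh$ is strictly monotone on $[0,\infty)$ and $u = \cos^2 t$ is a smooth monotone function of $t$ on $[0,\pi/2]$, the sign of $\rho_1'(t)$ is controlled by the sign of $f'(u)$ via the chain rule $d/dt = -\sin(2t)\, d/du$. I would compute $f'(u)$ in closed form, combine the rational and radical contributions of the derivative over a common denominator, and read off the sign of the resulting single expression on $(0,1]$.

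The main obstacle is purely algebraic: $f$ is the sum of a rational function and a square-root term, so $f'(u)$ has two contributions that must be compared after combining over a common denominator, and one squaring step will be needed to clear the remaining radical and reduce the sign of $f'$ to a polynomial inequality in $u$. That polynomial inequality should hold throughout $(0,1]$, with the endpoint $u = 1$ (corresponding to $t = 0$) handled by direct substitution and the endpoint $u \to 0^+$ (corresponding to $t \to \pi/2^-$) by continuity. Once $f'$ has a definite sign on $(0,1]$, the chain rule yields the required monotonicity of $\rho_1(t)$ on $[0,\pi/2]$, and Schl\"{a}fli's identity then gives the sign of $V_1''$ that proves Lemma~\ref{convex}.
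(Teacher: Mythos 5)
Your plan matches the paper's proof essentially line for line: both reduce, via Schl\"{a}fli's formula, to a sign condition on $\rho_1'$, and then read that sign off from the closed form in Lemma~\ref{gram} by differentiating through $u = \cos^2 t$. The paper compresses the algebra into ``one can check by differentiating''; your computation of $f'(u)$ for $f(u) = (1+24u+\sqrt{1+48u})/(32u)$ is just that step written out.

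The one thing you should not leave as a hedge is the phrase ``of the sign forced by the convexity assertion'': pin the sign down rather than trusting the statement to tell you how it will land. From $V_1''(t) = -\frac{1}{2}\rho_1'(t)$, convexity needs $\rho_1'(t) \le 0$. But the closed form gives $f(1) = 1$ so $\rho_1(0)=0$, and $\rho_1(t) > 0$ for $t>0$; and in fact
$$f'(u) = -\frac{1 + 24u + \sqrt{1+48u}}{32u^2\sqrt{1+48u}} < 0,$$
while $u = \cos^2 t$ decreases on $[0,\pi/2]$, so $\rho_1'(t) > 0$. That is exactly the monotonicity the paper's proof asserts, but combined with $V_1'' = -\frac{1}{2}\rho_1'$ it gives $V_1'' < 0$ on $(0,\pi/2)$, i.e.\ concavity rather than the convexity stated in the lemma. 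You should carry the chain rule all the way through and confront this sign directly; as written, the hedge hides the fact that the computation does not come out the way the convexity claim would require. (The paper's own proof has the same discrepancy: it shows $\rho_1'>0$ and cites that as establishing convexity.)
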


\begin{proof}
By the Schl\"{a}fli differential formula, it suffices to show that
$$\frac{d}{dt} \rho_1(t) > 0$$
for $t\in[0,\pi/2]$ where
$$\cosh(\rho_1(t)) = 
\sqrt{\frac{24 \cos^2{t} +1 + \sqrt{1+ 48 \cos^2{t}}}{32\cos^2{t}}}.$$
One can check that $\frac{d}{dt}{\cosh^2(\rho_1(t)})>0$ for $t \in [0,\pi/2]$ by
differentiating.  Therefore, since
$\frac{d}{dx}{x^2}>0$ and $\frac{d}{dx}{\cosh(x)}>0$, it follows from the
chain rule that
$\frac{d}{dt}\rho_1(t)>0$ as well.  \end{proof}

\subsection{Alternating prism is minimal volume}\label{S:altmin}
In this section, we apply the preceding lemmas to show that the alternating
prism is of minimal volume and prove the lower bound in
Theorem~\ref{prismsum}.

This lemma describes a decomposition of basic prisms into cubes of the form
$C_1(\mu)$ and $C_2(\nu)$.  It is a special
case of Theorem 4 in \cite{derkim} and was independently discovered by the
author.

\begin{lemma}\label{ddecomp}
Suppose that $\DD$ is a  basic $n$--prism.  Then $\DD$ can be decomposed
into $r$ copies of $C_1(\mu)$ and $s=n-r-3$ copies of $C_2(\nu)$ where
$r\mu+s\nu=\pi/2$.
\end{lemma}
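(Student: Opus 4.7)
The plan is to argue by induction on $n$, peeling off one cube of type $C_1$ or $C_2$ from the basic $n$-prism at each step.

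For the base case $n=4$, the basic-prism conditions pin ten of the twelve dihedrals to $\pi/2$ and place exactly one $\pi/3$ in each of the pairs $(\aa_3, \bb_3)$ and $(\aa_4, \bb_4)$. Case analysis on whether these two $\pi/3$s sit on opposite horizontal faces (giving $C_1(\pi/2)$) or on the same horizontal face (giving $C_2(\pi/2)$) shows that $\DD$ is one of these two cube types. Thus $r+s=1$ and the sole essential angle is $\pi/2$, trivially satisfying $r\mu+s\nu=\pi/2$.

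For the inductive step ($n\geq 5$), I would construct a hyperbolic geodesic plane $\Pi$ inside $\DD$ containing the vertical edge $c_1$ of $\DD$ (which carries dihedral $\pi/2$) and meeting the side face $F_4$ orthogonally in a vertical line from a point $w\in a_4$ to the corresponding point $w'\in b_4$. The plane $\Pi$ splits $\DD$ into a $4$-prism $C$ (whose top face is the quadrilateral $v_2v_3v_4w$) and a residual polyhedron $\DD'$ (whose top face is the $(n-1)$-gon $v_2,w,v_5,\dots,v_n,v_1$). The new edges of the cut face on the top, bottom, and $F_4$ all inherit the label $\pi/2$ from the orthogonality, while the vertical edge $c_1$ has its original $\pi/2$ dihedral split between the two sub-prisms into $\mu_C$ (on the $C$-side) and $\pi/2-\mu_C$ (on the $\DD'$-side). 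A short case-check on the pairs $(\aa_3, \bb_3)$ and $(\aa_4, \bb_4)$ confirms that $C$ carries exactly two $\pi/3$ dihedrals, distributed either on opposite horizontal faces (so $C\cong C_1(\mu_C)$) or on the same horizontal face (so $C\cong C_2(\mu_C)$). The generalized Andreev theorem (Theorem~\ref{andbb}) ensures that each such abstract labeled cube is realized hyperbolically, and $\DD'$ inherits a Coxeter labeling with $c_i=\pi/2$ everywhere. Applying the inductive hypothesis to $\DD'$ and combining with $C$ yields the decomposition of $\DD$ into $n-3$ cubes whose essential angles at $c_1$ telescope to $\pi/2$.

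The main obstacle is ensuring the essential angles can be chosen so that all $r$ copies of $C_1$ share one value $\mu$ and all $s$ copies of $C_2$ share one value $\nu$, as demanded by the statement. A greedy peel yields a priori different essential angles at each step, so I would address this either by making the cuts symmetrically --- exploiting the combinatorial symmetry of the basic prism about its central vertical axis to force uniform $\mu_C$'s and $\nu_C$'s by construction --- or by a post-decomposition equalization using Schl\"afli's formula (as in Section~\ref{deformation}) to deform the $\mu$'s (resp.\ $\nu$'s) toward a common value within each type while keeping the weighted sum $r\mu+s\nu=\pi/2$ invariant along $c_1$.
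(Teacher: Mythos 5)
Your geometric decomposition is the same as the paper's: cut along the planes through $c_1$ orthogonal to the side faces $F_4,\dots,F_{n-1}$ (whether you do this all at once or one at a time is cosmetic). You have also correctly identified exactly where the real content lies --- showing that all the $C_1$-type cubes share a single essential angle $\mu$ and all the $C_2$-type cubes share a single $\nu$ --- but your two proposed fixes for this do not work, and you have missed the observation that closes the gap.

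The ``symmetry'' idea fails because a generic basic $n$--prism has no combinatorial symmetry about the $c_1$-axis: the pattern of $(\pi/2,\pi/3)$ versus $(\pi/3,\pi/2)$ versus $C_2$-type blocks is arbitrary, so symmetric cutting is not available. The ``Schl\"afli equalization'' idea is not well posed: Schl\"afli controls how volume responds to changes in dihedral angle, but here the cubes are rigid pieces of a fixed hyperbolic polyhedron and their essential angles along $c_1$ are already determined by the geometry; there is no free parameter to deform. The correct argument is much shorter and is exactly what Lemma~\ref{gram} was set up for: every cube in the decomposition contains the \emph{entire} geodesic edge $c_1$ (the cutting planes all pass through the line supporting $c_1$, so they fan out around it like pages of a book), and Lemma~\ref{gram} shows that for each cube type the map $\mu \mapsto \rho_i(\mu)$ is injective (indeed strictly monotone, cf.\ Lemma~\ref{convex}). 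Thus the common length of $c_1$ forces a common $\mu$ for all $C_1$-pieces and a common $\nu$ for all $C_2$-pieces, and since the dihedral wedges at $c_1$ partition the original right angle, $r\mu + s\nu = \pi/2$. Also note a small slip in your inductive setup: after peeling off one cube, the residual $(n-1)$-prism $\DD'$ has dihedral angle $\pi/2 - \mu_C < \pi/2$ at $c_1$, not $\pi/2$, so $\DD'$ is not a basic prism and the inductive hypothesis cannot be invoked as stated; this is another symptom of the same underlying issue.
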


\begin{proof}
Label the quadrilateral face bounded by $a_i, b_i, c_i,$ and $c_{i-1}$ by
$F_i$.  For each $F_i$, $4\leq i \leq n-1,$  there is a unique geodesic plane
that contains $c_1$ and meets $F_i$ orthogonally.  Decomposing along these
planes gives the desired decomposition into copies of $C_1$ and $C_2$.  The fact
that the determining angles of each copy of $C_i$, $i=1,2$, are equal follows
from the fact that the length of $c_1$ determines the dihedral angle $\mu$ by 
Lemma~\ref{gram}.
\end{proof}

Note that Lemma~\ref{ddecomp} gives a decomposition of the alternating
$n$--prism into $n-3$ copies of $C_1\left(\frac{\pi}{2(n-3)}\right)$.

\begin{theorem}  \label{altmin}
The alternating $n$--prism is the minimal volume prism in $O(D_n)$.
\end{theorem}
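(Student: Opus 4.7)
The strategy is to reduce the minimization over $O(D_n)$ to basic $n$-prisms, decompose each such prism into cubes of type $C_1$ and $C_2$, and then combine the strict inequality $V_1<V_2$ with convexity of $V_1$ via Jensen to isolate the alternating prism as the unique minimizer.

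First I would invoke Lemma~\ref{redrep}: any $\DD \in O(D_n)$ admits an angle-nondecreasing deformation to a basic $n$-prism $\DD_1 \in O(D_n)$ with $\vol(\DD) \geq \vol(\DD_1)$, so it suffices to minimize among basic prisms. Given a basic prism $\DD$, Lemma~\ref{ddecomp} decomposes it into $r$ copies of $C_1(\mu)$ and $s = n-3-r$ copies of $C_2(\nu)$, where the decomposing planes all meet along the edge $c_1$ and hence the essential angles satisfy $r\mu + s\nu = \pi/2$. Thus
\[
\vol(\DD) \;=\; r\,V_1(\mu) + s\,V_2(\nu).
\]
Note $\mu,\nu \leq \pi/(2\max(r,s)) \leq \pi/4$, so all arguments lie in the domain where Lemmas~\ref{cubelemma} and \ref{convex} apply.

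Next, Lemma~\ref{cubelemma} gives $V_2(\nu) > V_1(\nu)$ on $[0,\pi/2)$, yielding
\[
\vol(\DD) \;\geq\; r\,V_1(\mu) + s\,V_1(\nu),
\]
with equality precisely when $s = 0$. Since $V_1$ is convex on $[0,\pi/2]$ by Lemma~\ref{convex}, Jensen's inequality applied with weights $r/(n-3)$ and $s/(n-3)$ gives
\[
r\,V_1(\mu) + s\,V_1(\nu) \;\geq\; (n-3)\,V_1\!\left(\frac{r\mu + s\nu}{n-3}\right) \;=\; (n-3)\,V_1\!\left(\frac{\pi}{2(n-3)}\right).
\]
The right-hand side is precisely the volume of the alternating $n$-prism, which by the remark following Lemma~\ref{ddecomp} decomposes into $n-3$ congruent copies of $C_1(\pi/(2(n-3)))$. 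This proves the lower bound.

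For uniqueness of the minimizer, I would trace the equality cases: the first inequality is strict whenever $s>0$, and Jensen's is an equality only when $\mu = \nu$ or $rs=0$. Both saturate simultaneously only when $s=0$ with $\mu = \pi/(2(n-3))$, which gives the alternating prism (the decomposition determines $\DD$ uniquely up to isometry). The main obstacle I anticipate is not analytic but bookkeeping: confirming that the domain $\mu,\nu \in [0,\pi/2)$ is respected and that the integer constraints $r,s\geq 0$ with $r+s = n-3$ coming from Lemma~\ref{ddecomp} do not exclude the minimizing configuration $(r,s)=(n-3,0)$; for $n\geq 5$, the required angle $\mu=\pi/(2(n-3))$ lies well inside $(0,\pi/4]$, so this is automatic.
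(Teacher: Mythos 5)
Your proof is correct and follows essentially the same route as the paper: reduce to basic prisms via Lemma~\ref{redrep}, decompose via Lemma~\ref{ddecomp}, apply $V_1 < V_2$ and convexity of $V_1$ (Jensen) to isolate the alternating prism. The paper leaves the final Jensen step implicit (\emph{``This inequality follows immediately from the fact that $V_2>V_1$ and the convexity of $V_1$''}), whereas you spell it out and track equality cases; the only minor slip is your claim that $\mu,\nu\leq\pi/(2\max(r,s))\leq\pi/4$ — the correct consequence of $r\mu+s\nu=\pi/2$ with $\mu,\nu>0$ is $\mu<\pi/(2r)$ and $\nu<\pi/(2s)$, so $\pi/4$ need not hold when $r=s=1$, but the weaker fact $\mu,\nu\in(0,\pi/2)$ is all that Lemmas~\ref{cubelemma} and~\ref{convex} require, so the argument is unaffected.
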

\begin{proof}
If $\DD \in O(D_n)$ is not a basic prism, then by Lemma \ref{redrep}, there is a basic
prism with volume smaller than $\DD$.  Therefore the it suffices to show that the
alternating $n$--prism is the smallest volume basic $n$--prism.

By Lemma~\ref{ddecomp}, it is enough to show that 
$$rV_1(\mu)+sV_2(\nu) > (r+s)V_1\left(\frac{\pi}{2(r+s)}\right),$$
where $r\mu + s \nu = \pi/2$.  Setting $t=\frac{r}{r+s}$, the inequality becomes
$$ t V_1(\mu) + (1-t) V_2(\nu) > V_1 ( t\mu + (1-t) \nu).$$
This inequality follows immediately from the fact that $V_2 > V_1$ and the
convexity of $V_1$.
\end{proof}

Lemma~\ref{ddecomp} can be used to express the volume of any basic prism in terms of the
volume of $C_1$ and $C_2$.  In particular, the volume of the alternating prisms
can be calculated explicitly:

\begin{corollary}\label{volalt}
The volume of the alternating $n$--prism $A_n$ is given by
$$\vol(A_n)=(n-3)\vol\left(C_1\left(\frac{\pi}{2(n-3)}\right)\right).$$
\end{corollary}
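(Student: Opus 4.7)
The plan is to observe that the alternating $n$--prism is by definition a basic $n$--prism, so Lemma~\ref{ddecomp} applies and produces a decomposition into $r$ copies of $C_1(\mu)$ and $s = n-r-3$ copies of $C_2(\nu)$ along the $n-3$ geodesic planes through $c_1$ that meet the faces $F_4,\dots,F_{n-1}$ orthogonally. The goal is to show that in the alternating case the decomposition consists entirely of copies of $C_1$, i.e.\ $r = n-3$ and $s = 0$, and to identify the common essential angle $\mu$.

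First I would examine the combinatorial pattern of the non--right dihedral angles in the alternating prism. By definition, the pairs $(\aa_i,\bb_i)$ for $3 \leq i \leq n$ alternate strictly between $(\pi/3,\pi/2)$ and $(\pi/2,\pi/3)$. Consequently, for every consecutive index pair $(i,i+1)$ with $3 \leq i \leq n-1$, the two edges carrying dihedral angle $\pi/3$ lie on opposite $n$--gonal faces: one is of type $a$ and the other is of type $b$. This is precisely the combinatorial signature of $C_1$ as opposed to $C_2$, where the two $\pi/3$ edges would both be on the same $n$--gonal face. Therefore each of the $n-3$ pieces produced by the cutting procedure of Lemma~\ref{ddecomp} is combinatorially (and hence, by Andreev's theorem and Lemma~\ref{gram}, isometrically) a copy of $C_1(\mu)$ for some $\mu$, and no $C_2(\nu)$ pieces occur.

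Next, I would invoke the constraint $r\mu + s\nu = \pi/2$ of Lemma~\ref{ddecomp} with $r = n-3$ and $s = 0$. This forces $(n-3)\mu = \pi/2$, so that $\mu = \pi/(2(n-3))$, and the value is the same for every piece because all $n-3$ cubes share the common edge $c_1$ whose length determines $\mu$ by Lemma~\ref{gram}. Summing the volumes of the $n-3$ identical pieces yields
\[
\vol(A_n) = (n-3)\,\vol\!\left(C_1\!\left(\tfrac{\pi}{2(n-3)}\right)\right),
\]
as claimed.

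The argument is essentially bookkeeping once Lemma~\ref{ddecomp} is in hand; the only nontrivial point is ensuring that the alternating labeling truly produces exclusively $C_1$--type pieces rather than $C_2$--type pieces. This would be verified by unwinding the definitions of $C_1(\mu)$ and $C_2(\mu)$ against the alternating placement of the $\pi/3$ angles on the top and bottom $n$--gons, so no serious obstacle arises.
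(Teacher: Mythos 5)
Your argument is exactly the one the paper intends: the alternating prism is a basic prism, Lemma~\ref{ddecomp} slices it by the $n-4$ planes through $c_1$, the alternating $\pi/3$ placement forces every resulting cube to be a $C_1$ (never a $C_2$), and since all $n-3$ pieces share the edge $c_1$ the relation $r\mu+s\nu=\pi/2$ of Lemma~\ref{ddecomp} together with Lemma~\ref{gram} forces each essential angle to equal $\pi/(2(n-3))$. The paper states this without further proof in the remark immediately following Lemma~\ref{ddecomp}, so your reasoning matches the intended one.
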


The quantity $\vol(C_1(\frac{\pi}{2(n-3)}))$ can be calculated using a theorem
of Kellerhals that we restate here \cite{kellerhals}.  Suppose that $C$ is a Lambert cube with essential angles
$\aa_3,$ $\bb_4$, and $\cc_1$. The \textit{principal parameter}, $\theta$, of
$C$ is defined by $$\theta=\arctan{\frac{\sqrt{\cosh^2{\rho(\cc_1)} -
\sin^2{\aa_3} \sin^2{\bb_4}}}{\cos{\aa_3}\cos{\bb_4}}},$$ where $\rho(\cc_1)$ is
the length of the edge $c_1$.  The volume of the Lambert cube is then given by
the following theorem.

\begin{theorem}[Kellerhals]\label{kellerhals}
Let $C$ be a Lambert cube with essential angles $0 \leq \aa_3,\, \bb_4,\, \cc_1
\leq \pi/2$.  Then the volume of $C$ is given by 
\begin{align*}
\vol(C) = \frac{1}{4} \bigg(&\Lambda(\aa_3 + \theta) - \Lambda(\aa_3 - \theta) +
\Lambda(\bb_4 + \theta) - \Lambda(\bb_4 - \theta) \\ 
&+ \Lambda(\cc_1 + \theta) - \Lambda(\cc_1 - \theta) - \Lambda(2\theta) +
2\Lambda\left(\frac{\pi}{2} - \theta\right)\bigg).
\end{align*}
\end{theorem}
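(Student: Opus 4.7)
The plan is to combine Schl\"{a}fli's differential formula with a Gram-matrix computation of the essential edge lengths, then integrate along a deformation path to arrive at a closed form involving the Lobachevsky function. Since the only non-right dihedral angles of $C$ are the essential angles $\aa_3,\bb_4,\cc_1$, Schl\"{a}fli's formula collapses to
$$d\vol(C) = -\tfrac{1}{2}\bigl(\rho(\aa_3)\, d\aa_3 + \rho(\bb_4)\, d\bb_4 + \rho(\cc_1)\, d\cc_1\bigr),$$
where $\rho(\aa_i)$ denotes the hyperbolic length of the essential edge carrying dihedral angle $\aa_i$, in the notation of the statement.

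First I would compute the three essential edge lengths as functions of $(\aa_3,\bb_4,\cc_1)$. Writing down the $6\times 6$ Gram matrix of $C$ and exploiting its rank-$4$ degeneracy, exactly as in the proof of Lemma~\ref{gram}, yields a symmetric system of polynomial relations among the quantities $\cosh\rho(\aa_3),\cosh\rho(\bb_4),\cosh\rho(\cc_1)$ and $\cos\aa_3,\cos\bb_4,\cos\cc_1$. The key observation, due to Kellerhals, is that the principal parameter $\theta$ defined in the statement provides a symmetric rationalization of these three coupled relations, so that each $\cosh\rho(\aa_i)$ becomes expressible in $\theta$ and $\aa_i$ alone via an elementary trigonometric formula.

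Next I would integrate the Schl\"{a}fli identity along a piecewise-smooth path in the parameter space of Lambert cubes from a reference configuration to the configuration realizing $C$. A convenient reference is the right-angled limit $(\aa_3,\bb_4,\cc_1)=(\pi/2,\pi/2,\pi/2)$, at which $C$ degenerates to a point of volume zero and $\theta\to\pi/2$. Changing variables to $\theta$ along the path and invoking the identity
$$\tfrac{d}{dx}\Lambda(x) = -\log|2\sin x|$$
converts each contribution into the derivative of a sum of $\Lambda$-values. The six terms $\Lambda(\aa_i+\theta)-\Lambda(\aa_i-\theta)$ for $\aa_i \in \{\aa_3,\bb_4,\cc_1\}$ arise directly from the edge-length expressions above, while the correction $-\Lambda(2\theta)+2\Lambda(\pi/2-\theta)$ comes from the intrinsic $\theta$-dependence of the substitution and from matching the volume at the reference configuration. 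Verifying that both sides of the formula share all three partial derivatives with respect to the essential angles, and agree at the reference, completes the proof by a standard real-analytic argument on the interior of the Andreev-realizability locus of Lambert cubes.

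The main obstacle is the symmetric rationalization step: solving the Gram-matrix system for the essential edge lengths yields unwieldy nested radicals in $\aa_3,\bb_4,\cc_1$ (compare the formula for $\cosh(\rho_1(\mu))$ in Lemma~\ref{gram}), and only after introducing the principal parameter $\theta$ do these expressions become elementary enough that the Schl\"{a}fli integrals can be carried out in closed form and recognized as Lobachevsky function values.
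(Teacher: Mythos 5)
The paper does not prove this statement; it is Kellerhals' theorem, stated and used with a citation to \cite{kellerhals}, so there is no in-paper proof to compare against. That said, your sketch captures the spirit of Kellerhals' original argument, which indeed proceeds by Schl\"{a}fli's differential formula, Gram-matrix computation of the three essential edge lengths, introduction of the principal parameter $\theta$ to rationalize the coupled relations, and integration from the degenerate all-right-angled reference configuration where the volume and the formula both vanish. Your choice of that reference point and the identification $\theta\to\pi/2$ there are both correct.

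One step you describe too loosely is the final matching of partial derivatives. Since $\theta$ is itself a function of $(\aa_3,\bb_4,\cc_1)$, differentiating the claimed closed form with respect to, say, $\aa_3$ produces additional $\partial\theta/\partial\aa_3$ contributions beyond the desired $-\tfrac12\rho(\aa_3)$; the proof hinges on showing that the $\theta$-partial of the right-hand side, with the essential angles held fixed, vanishes identically by virtue of the defining relation for $\theta$. This is exactly the role of the $-\Lambda(2\theta)+2\Lambda(\pi/2-\theta)$ terms, and without establishing that cancellation the comparison of partials is incomplete. You gesture at this (``intrinsic $\theta$-dependence of the substitution''), but it is the central computation of the proof and should be carried out explicitly rather than absorbed into a ``standard real-analytic argument.'' With that verification supplied, your outline is a faithful reconstruction of the route taken in the cited source.
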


Corollary~\ref{volalt} only needs the case where $\aa_3=\bb_4=\pi/3$.
To find the principal parameter, Lemma~\ref{gram} can be used to compute that
$$\cosh{\rho(\cc_1)}= 
\sqrt{\frac{1+ 24 \cos^2{\cc_1}  + \sqrt{1+ 48
\cos^2{\cc_1}}}{32\cos^2{\cc_1}}}.$$
A program such as Mathematica easily computes the volume of Lambert cubes using
Kellerhals' formula.

Finally, it should be noted that for all $n \geq 5$, by Schl\"{a}fli's formula 
$$\vol\left(C_1\left(\frac{\pi}{2(n-3)}\right)\right) > \vol\left( C_1 \left(
\frac{\pi}{3}\right) \right) \approx .324423,$$
so we have the following corollary to Corollary~\ref{volalt} that bounds the
volume of the $n$--prism from below linearly in $n$.  This proves the lower
bound in Theorem~\ref{prismsum}.

\begin{corollary}\label{corprism}
For any Coxeter $n$--prism $D$,
$$\vol(D) > (n-3) \cdot \vol\left( C_1 \left(
\frac{\pi}{3}\right) \right).$$
\end{corollary}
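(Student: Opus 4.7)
The plan is to chain together three results that have already been established: Theorem~\ref{altmin}, Corollary~\ref{volalt}, and the Schl\"afli monotonicity of $V_1$ recorded in the display immediately preceding the statement. No new geometric input is needed; the corollary is essentially a repackaging of these facts.

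First I would apply Theorem~\ref{altmin}, which identifies the alternating $n$-prism $A_n$ as the minimum-volume element of $O(D_n)$, to obtain $\vol(D) \geq \vol(A_n)$. Next I would invoke Corollary~\ref{volalt} to rewrite this as
$$\vol(D) \geq (n-3)\, V_1\!\left(\tfrac{\pi}{2(n-3)}\right),$$
where $V_1(\mu) = \vol(C_1(\mu))$. Finally, I would compare $V_1(\pi/(2(n-3)))$ with $V_1(\pi/3)$: for $n \geq 5$ we have $\pi/(2(n-3)) \leq \pi/4 < \pi/3$, so it suffices to establish strict monotonicity of $V_1$. This is precisely Schl\"afli's formula applied to the one-parameter family $C_1(t)$: fixing all angles except $\cc_1$ and differentiating gives $\tfrac{d}{dt}V_1(t) = -\tfrac{1}{2}\rho_1(t)$, and $\rho_1(t) > 0$ by Lemma~\ref{gram} (it is a hyperbolic edge length), so $V_1$ is strictly decreasing. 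Concatenating the three inequalities yields the desired bound $\vol(D) > (n-3)\vol(C_1(\pi/3))$.

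There is no serious obstacle here; the only thing to check is that the strict inequality survives. It does, because strict monotonicity of $V_1$ gives $V_1(\pi/(2(n-3))) > V_1(\pi/3)$ strictly whenever $n \geq 5$, and this strict gap is preserved after multiplication by the positive integer $n-3$. The proof should therefore be only a few lines long, essentially just citing the three named results in sequence.
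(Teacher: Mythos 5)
Your proposal matches the paper's argument exactly: the corollary is stated as following from Corollary~\ref{volalt} together with the Schl\"afli-formula observation that $\vol\bigl(C_1(\pi/(2(n-3)))\bigr) > \vol\bigl(C_1(\pi/3)\bigr)$ for $n \geq 5$, which is precisely the three-step chain (Theorem~\ref{altmin}, Corollary~\ref{volalt}, strict monotonicity of $V_1$) you describe. The reasoning about where strictness enters is also sound, so there is nothing to add.
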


\subsection{Prism regions in non--obtuse polyhedra}\label{gutsofprisms}

For a turnover--reduced non--obtuse polyhedron $\PP$, Theorem~\ref{bspolyorb}
applied to $\QQ_{\PP}^{\perp}$ gives a collection, $\TT$,  of topological
quadrilaterals along which $\PP$ may be decomposed into atoroidal components
and prisms.  We have already shown how to bound below the volume of the
atoroidal components.  In this section, we will show how to obtain a lower
volume bound on the components of the complement of $\TT$ in $\PP$ that
correspond to the Seifert--fibered components in the splitting of
$\QQ_{\PP}^{\perp}$.  We may assume that $\PP$ has all dihedral angles equal
to $\pi/2$ or $\pi/3$ because by Proposition~\ref{unif}, there exists a
volume--nonincreasing deformation from any turnover--reduced Coxeter
polyhedron to one with all dihedral angles $\pi/2$ or $\pi/3$.

Let $\TT' \subset \TT$ consist of the quadrilaterals in $\TT$ that meet both a
prism and an atoroidal component of the complement of $\TT$.  Denote by $\PP
\split \TT'$ the disjoint union of the closures of the components of $\PP
\setminus \TT'$.  Each component of $\PP \split \TT'$ is either a collection of
atoroidal components glued along $\TT \setminus \TT'$ or a collection of prisms
glued along $\TT \setminus \TT'$.  Denote the components of $\PP \split \TT'$
that consist of prisms glued to one another by $\GG_1 , \GG_2, \dots \GG_N$.
For each $\GG_i$, the associated orbifold $\QQ_{\GG_i}^{\perp}$ is a graph
orbifold.   The \textit{boundary} of $\GG_i$, denoted $\partial{G_i}$, is
$\GG_i \cap \TT'$.  Each edge of $\GG_i$ that is intersected by a
quadrilateral in $\TT \setminus \TT'$ is a \textit{shared edge}, in that it is
an edge of two prisms that have been glued together.  Note that, in general,
$\GG_i$ is not a hyperbolic polyhedron because the boundary of $\GG_i$ will
only be geodesic in special cases.  See Figure~\ref{graphprism} for an example
of a possible $\GG_i$.

\begin{figure} 
\labellist
\small\hair 2pt
\pinlabel $e_1$ [bl] at 261 471
\pinlabel $e_2$ [tr] at 290 495
\pinlabel $e_3$ [tl] at 290 428
\pinlabel $e_4$ [tl] at 260 397
\endlabellist
\begin{center}
\scalebox{1}{\includegraphics{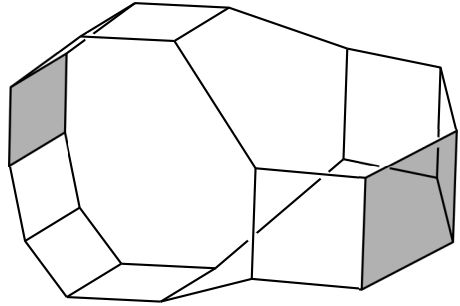}} 
\end{center}
\caption{A possible $\GG_i$.  The shaded faces comprise $\partial{\GG_i}$.  The
edges $e_1,$ $e_2$, $e_3$, and $e_4$ are shared edges} 
\label{graphprism} 
\end{figure}

To bound the volume from below, we first decompose each $\GG_i$ further into its
constituent prisms by considering the components $\DD_i^j$ of $\GG_i \split
(\TT \setminus \TT')$. Again, the $\DD_i^j$ are not hyperbolic polyhedra, in
general.  We label the edges as before, whether or not they are actual
geodesic edges as in Figure~\ref{prismwithbdy}.  Label the face bounded by
$a_i, b_i, c_i,$ and $c_{i-1}$ by $F_i$.   The non--geodesic edges will be
referred to as \textit{virtual edges}.  The \textit{order} of $\DD_i^j$ is
$n$ if $\DD_i^j$ has $n+2$ total faces, including the non--geodesic faces.

Recall that Lemma~\ref{redrep}, which follows from Andreev's theorem, says
that there exists a volume--nonincreasing deformation from any Coxeter prism
to one where all dihedral angles are $\pi/2$ or $\pi/3$ such that all of the
edges $c_l$ have dihedral angle $\pi/2$, two adjacent quadrilateral faces have all
dihedral angles $\pi/2$ and each other quadrilateral face has
exactly one dihedral angle of $\pi/3$.  A similar statement is true for the
$\DD_i^j$.  

\begin{figure} 
\labellist
\small\hair 2pt
\pinlabel $a_1$ [br] at 22 84
\pinlabel $a_2$ [bl] at 8 56
\pinlabel $a_3$ [b] at 47 37 
\pinlabel $a_4$ [br] at 97 43
\pinlabel $a_5$ [l] at 116 71
\pinlabel $a_6$ [b] at 77 93
\pinlabel $b_1$ [tl] at 27 56
\pinlabel $b_2$ [tr] at 7 29
\pinlabel $b_3$ [t] at 45 5
\pinlabel $b_4$ [tl] at 101 11
\pinlabel $b_5$ [tr] at 116 43
\pinlabel $b_6$ [t] at 74 62
\pinlabel $c_1$ [r] at 0 56
\pinlabel $c_2$ [l] at 16 25
\pinlabel $c_3$ [r] at 77 17
\pinlabel $c_4$ [l] at 122 40
\pinlabel $c_5$ [r] at 107 71
\pinlabel $c_6$ [l] at 47 81
\pinlabel $\frac{a_1}{b_1}$ [br] at 191 85
\pinlabel $\frac{a_2}{b_2}$ [r] at 171 48
\pinlabel $\frac{a_3}{b_3}$ [tr] at 191 11
\pinlabel $\frac{a_4}{b_4}$ [tl] at 237 14
\pinlabel $\frac{a_5}{b_5}$ [l] at 254 47
\pinlabel $\frac{a_6}{b_6}$ [bl] at 233 85
\endlabellist
\begin{center}
\scalebox{1}{\includegraphics{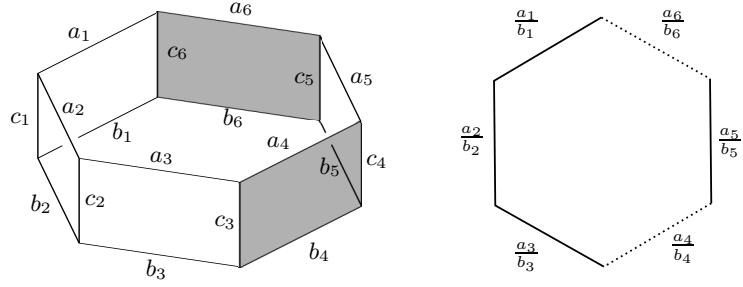}} 
\end{center}
\caption{Edge labels for a component $\DD_i^j$ with boundary faces shaded.  The
figure on the right is a useful schematic for representing such a prism region}
\label{prismwithbdy} 
\end{figure}

\begin{lemma}\label{redrepfordrs}
Suppose $\PP$ is a hyperbolic Coxeter polyhedron.  Then there exists a
volume--nonincreasing deformation of $\PP$ through hyperbolic polyhedra so that each
$\DD_i^j$ in the decomposition described above of the resulting polyhedron has
the dihedral angles satisfying the following conditions, up to cyclic relabeling: 
\begin{enumerate}
	\item $\aa_1=\bb_1=\pi/2$
	\item $\cc_k = \pi/2$ if $c_k$ is not a virtual edge.
	\item If $a_2$ and $b_2$ are not virtual edges, then either
$(\aa_2, \bb_2) = (\pi/2, \pi/2)$, $(\aa_2, \bb_2) = (\pi/3, \pi/2)$,
$(\aa_2, \bb_2) = (\pi/2, \pi/3),$ or $(\aa_2, \bb_2) = (\pi/3, \pi/3).$
	\item For each $k$, $3 \leq k \leq n$, such that $a_k$ and $b_k$ are not
virtual edges, $(\aa_k,\bb_k)=(\pi/2,
	\pi/3)$,  $(\aa_k,\bb_k)=(\pi/3, \pi/2),$ or $(\aa_2, \bb_2) = (\pi/3,
\pi/3).$
\end{enumerate}
\end{lemma}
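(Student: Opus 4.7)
The plan is to adapt the argument of Lemma~\ref{redrep} to the setting in which the prism pieces $\DD_i^j$ are embedded in a larger polyhedron $\PP$, manipulating only the dihedral angles along genuine (non--virtual) edges of $\PP$. A virtual edge of $\DD_i^j$ arises from splitting $\GG_i$ along a quadrilateral in $\TT \setminus \TT'$ and hence lies in the interior of $\GG_i$; such an edge is not an edge of $\PP$ and is not directly labeled by $\Theta$. Only the genuine edges of the $\DD_i^j$ correspond to edges of $P$, and these are the edges whose labels one is free to adjust. The conditions (1)--(4) place constraints only on these genuine edges, which is consistent with this setup.

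Next I would attempt, on each $\DD_i^j$ in parallel, the sequence of angle-increasing moves from the proof of Lemma~\ref{redrep}, acting only on labels of genuine edges of $P$. Since we may assume (via Proposition~\ref{unif}) that $\PP$ starts with all dihedral angles in $\{\pi/2, \pi/3\}$, every move is an increase from $\pi/3$ to $\pi/2$. At each step the new labeling must remain in $\Omega(P)$: the vertex-link conditions (2) and (3) of Andreev's theorem are preserved because every change is nondecreasing; condition (4) is preserved because $\PP$ is turnover-reduced; and the one delicate check is condition (5) on prismatic $4$-circuits. Volume-nonincreasingness along the piecewise linear interpolation follows from Schl\"{a}fli's formula since the label changes are coordinate-wise nondecreasing.

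The main obstacle is that some of the target configurations in (3) and (4), notably $(\aa_k, \bb_k) = (\pi/3, \pi/3)$, are strictly weaker than what Lemma~\ref{redrep} achieves for a stand-alone prism. These configurations arise precisely when a prismatic $4$-circuit of $P$ passes through both $a_k$ and $b_k$ together with two other edges already labeled $\pi/2$, so that any further increase would push the angle sum to $2\pi$ and violate Andreev's condition (5). A case analysis, organized by which of the neighboring faces of $\DD_i^j$ are virtual (and hence impose no prismatic $4$-circuit constraint in $P$) versus genuine (and therefore do constrain the permissible increases), shows that the four possibilities in (3) and the three in (4) exhaust all maximally-increased labelings compatible with Andreev's theorem on $P$. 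Collecting these case-by-case deformations into a single piecewise-smooth path in $\Omega(P)$ gives the required volume-nonincreasing deformation.
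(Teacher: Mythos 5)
Your proof follows essentially the same route as the paper: first reduce to all dihedral angles lying in $\{\pi/2,\pi/3\}$ via Proposition~\ref{unif}, then run the angle--increasing moves from Lemma~\ref{redrep} on the genuine (non--virtual) edges, checking Andreev's conditions on $P$ at each step, with Schl\"{a}fli's formula giving volume--nonincreasingness; the paper likewise states ``the proof here is essentially the same as the proof of Lemma~\ref{redrep}'' and points to Figure~\ref{33eg} for the case where a shared edge pair must remain at $(\pi/3,\pi/3)$. One small imprecision in your write-up: you assert that $(\aa_k,\bb_k)=(\pi/3,\pi/3)$ occurs \emph{precisely when} a prismatic $4$--circuit of $P$ passes through $a_k$, $b_k$, and two edges already labeled $\pi/2$. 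That condition only forbids raising \emph{both} $\aa_k$ and $\bb_k$ to $\pi/2$; raising a single one still yields $\pi/2+\pi/3+\pi/2+\pi/2 = 11\pi/6 < 2\pi$, which is Andreev--compatible, so that single $4$--circuit would not force $(\pi/3,\pi/3)$. The genuine obstruction (as in the paper's Figure~\ref{33eg}) is a system of coupled $4$--circuit constraints that also involve the partner shared edges in the adjacent prism region. This slip does not invalidate your argument, because $(\pi/3,\pi/3)$ is on the allowed list in conditions (3) and (4) in any case; your conclusion that the deformation terminates at one of the stated configurations still holds.
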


\begin{proof}
The proof here is essentially the same as the proof of Lemma~\ref{redrep}.  The
first two conditions preclude the existence of any prismatic $4$--circuits
passing through all edges with dihedral angles $\pi/2$.  By
Proposition~\ref{unif}, it is certainly the case that all other dihedral angles
in each prism region that are less than $\pi/3$ can be deformed to be $\pi/3$.
After this deformation, any pair of edges, $(a_k, b_k)$, that are not shared
edges with $\aa_k = \bb_k = \pi/3$ can have either
$\aa_k$ or $\bb_k$ deformed to $\pi/2$.  Figure~\ref{33eg} shows an example
where the dihedral angles along
a shared edge pair must both remain $\pi/3$.  
\end{proof}

\begin{figure}
\labellist
\hair 2 pt
\pinlabel $\frac{\pi}{2}$ [r] at 0 75
\pinlabel $\frac{\pi}{2}$ [r] at 21 124
\pinlabel $\frac{\pi}{2}$ [l] at 258 68
\pinlabel $\frac{\pi}{2}$ [l] at 278 117
\pinlabel $\frac{\pi}{3}$ [t] at 152 157
\pinlabel $\frac{\pi}{3}$ [t] at 126 103
\pinlabel $\frac{\pi}{2}$ [tl] at 178 19
\pinlabel $\frac{\pi}{2}$ [tl] at 217 89
\endlabellist
\begin{center}
\scalebox{.7}{\includegraphics{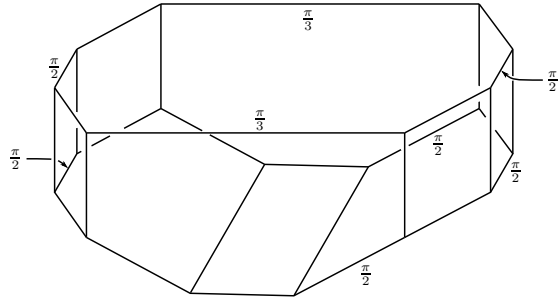}}
\end{center}
\caption{An example where a shared edge pair must both have dihedral angle of
$\pi/3$}
\label{33eg}
\end{figure}

From now on we will assume that $\PP$ satisfies the
conclusion of Lemma~\ref{redrepfordrs}.  In what follows, we give a
decomposition of the prism regions contained in $\PP$ and show how this
decomposition leads to a lower bound on the volume.  The
decompositions that follow should be thought of taking place in $\PP$ with the
previous decomposition into prisms used only as a mental crutch to understand
different ``regions" within $\PP$.

A (topological) quadrilateral $T$ embedded in $\PP$ is \textit{cylindrical} if
there exists a prismatic $4$--circuit  in $\PP \split T$ that  intersects two
edges of the boundary component of $\PP \split T$ corresponding to $T$ and two
edges with dihedral angle $\pi/2$.  See Figure~\ref{cylindrical}.  A quadrilateral is
\textit{acylindrical} if it is not cylindrical.

\begin{figure}
\labellist
\small\hair 2 pt
\pinlabel $T$ [bl] at  151 75
\pinlabel $\frac{\pi}{2}$ [br] at  58 128
\pinlabel $\frac{\pi}{2}$ [br] at  58 46
\endlabellist
\begin{center}
\scalebox{1}{\includegraphics{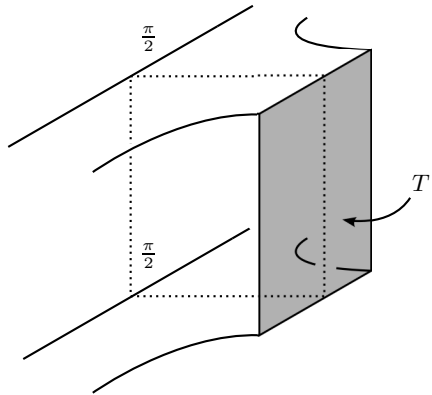}}
\end{center}
\caption{The shaded quadrilateral $T$ is cylindrical}
\label{cylindrical}
\end{figure}

\begin{lemma}\label{splitacyl}
If $T$ is an acylindrical quadrilateral in a hyperbolic Coxeter polyhedron
$\PP$, then each component, $\PP_i$, $i\in \{ 1,2 \}$, of $\PP \split T$ admits
a structure as a  hyperbolic polyhedron with dihedral angles along the edges
of  $T\cap \PP_i$ equal to $\pi/2$.  Furthermore, $$\vol(\PP) \geq
\vol(\PP_1) + \vol( \PP_2).$$
\end{lemma}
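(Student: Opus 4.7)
The plan is two-fold: first, realize each component $\PP_i$ as a hyperbolic polyhedron by verifying the hypotheses of Andreev's theorem; then deduce the volume inequality by doubling and applying the Agol--Storm--W.\ Thurston theorem exactly as in the atoroidal case of Section~\ref{atorcpts}.

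For the realization, I would let $(P_i, \Theta_i)$ be the labeled abstract polyhedron whose $1$--skeleton is obtained from $\PP_i$ with $T$ viewed as a new quadrilateral face; the labeling $\Theta_i$ agrees with the dihedral angles of $\PP$ on inherited edges and equals $\pi/2$ on each of the four new edges of $T$. The newly introduced vertices at the corners of $T$ are trivalent, incident to two new edges (each labeled $\pi/2$) and one inherited edge, so the vertex conditions of Andreev's theorem follow at once. Because the four new edges all lie in the single face $T$, any prismatic circuit in $P_i^*$ that uses new edges must traverse exactly one pair of \emph{opposite} edges of $T$; adjacent pairs share $T$ and so produce non--prismatic circuits. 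Acylindricality of $T$ is precisely the combinatorial assumption ruling out prismatic $4$--circuits through a pair of opposite $T$--edges together with two additional $\pi/2$--edges of $\PP$, and any other such $4$--circuit picks up an edge of dihedral angle strictly less than $\pi/2$, so Andreev's condition (5) holds with strict inequality.

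A new prismatic $3$--circuit through two opposite edges of $T$ would require an edge $e'$ on the $\PP_i$--side of $T$ shared by the corresponding opposite pair of faces $F, F'$ of the $4$--circuit realized by $T$. Such an $e'$ together with two of the circuit's edges would form a prismatic $3$--circuit in $\PP$ itself; turnover--reducedness of $\PP$ forces this circuit to be trivial, hence to bound a single triangular face $G$ of $\PP$, but the three edges that would border $G$ require $G$ to coincide simultaneously with three distinct faces of the original $4$--circuit, which is impossible. Hence no such $e'$ exists and Andreev's condition (4) is preserved. The remaining prismatic and triangular--prism hypotheses of Andreev's theorem are inherited from $\PP$, so $(P_i, \Theta_i)$ is realized by a hyperbolic polyhedron $\MRR_i$ with the four edges of $T$ carrying dihedral angle $\pi/2$; we identify this with $\PP_i$ equipped with its hyperbolic structure.

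For the volume inequality, we repeat the argument of Section~\ref{atorcpts} essentially verbatim. The iterated double $D(\QQ_{\PP})$ is a finite volume hyperbolic orbifold in which the doubled image of $T$ is an incompressible $S^2(2,2,2,2)$ suborbifold. Selberg's lemma furnishes a finite-sheeted manifold cover $M$ in which the preimage $\Sigma$ of this suborbifold is a genuine incompressible surface, and Theorem~\ref{ast} applied to $M$ along $\Sigma$ yields, after the standard bookkeeping with the doubling and cover degree factors, $\vol(\PP) \geq \vol(\MRR_1) + \vol(\MRR_2) = \vol(\PP_1) + \vol(\PP_2)$. The main obstacle I anticipate is the combinatorial argument ruling out new prismatic $3$--circuits at $T$: acylindricality handles $4$--circuits cleanly, but for $3$--circuits one must leverage turnover--reducedness of $\PP$ and carefully analyze how edges of $F \cap F'$ can meet the edges of the original $4$--circuit.
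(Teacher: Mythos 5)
Your proposal follows the paper's proof in both steps and in essentially the same way: for the first claim, the paper simply cites the verification of Andreev's conditions from Proposition~\ref{topdecomp}, which you have unpacked (link condition at the new trivalent corners of $T$, acylindricality to preserve condition (5), turnover-reducedness to preserve condition (4)); for the second claim, the paper's proof is word-for-word the Selberg--cover-plus-Agol--Storm--Thurston argument you describe. One place your write-up is shakier than you acknowledge: the step asserting that $e'$ together with two of the $4$--circuit's edges gives a \emph{prismatic} $3$--circuit in $\PP$ is not automatic -- $e'$ and $e_{FG}$ could meet at a degree-$4$ vertex of $\PP$ without forcing $e_{FG}$ and $e_{GF'}$ to meet, so prismaticity of the candidate $3$--circuit needs a separate argument (or one should argue, as the paper implicitly does, directly from triviality of the $3$--circuit in $\PP_i$ to a vertex shared by the $4$--circuit edges). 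Your closing sentence about the triangular face ``coinciding with three distinct faces of the $4$--circuit'' is garbled and does not yet deliver the contradiction. But the overall strategy and all the essential ingredients coincide with the paper's.
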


To prove this lemma, we again apply the theorem of Agol, Storm, and W. Thurston
\cite{ast}.

\begin{proof}
For the first claim, it suffices to show that each $\PP_i$ satisfies the
conditions of Andreev's theorem (Theorem~\ref{and})  when each of the edges
of $T \cap \PP_i$ are given dihedral angle $\pi/2$. The argument to show this
is the same as the argument used to prove Proposition~\ref{topdecomp}.

To prove the second claim, we use Theorem~\ref{ast}.  By Selberg's Lemma,
there exists a $n$--sheeted regular cover $M$ of $\HH^3 / \Gamma(\PP)$ that
is a hyperbolic $3$--manifold \cite{selberg}.  The acylindrical quadrilateral,
$T$, lifts to a orientable, incompressible surface $\Sigma$ embedded in $M$.
The components, $M_1$ and $M_2$, of $M \split \Sigma$ are index $n$ covers of
$\HH^3 / \Gamma(\PP_1)$ and $\HH^3 / \Gamma(\PP_2)$ with covering maps
induced by the covering map of $M$ to $\PP$.  Being finite regular covers of
hyperbolic orbifolds with geodesic boundary, each of the $M_i$ are hyperbolic
manifolds with geodesic boundary.  Hence, $\vol(M) = n \vol(\PP)$ and
$\vol(M_i) = n
\vol(\PP_i)$ for $i=1,2$.  Then, by Theorem~\ref{ast} and the fact that
$\frac{1}{2} V_3 ||D(M_i)|| =
\vol(M_i)$,  
\begin{align*}
n\vol(\PP) &= \vol(M) \geq \frac{1}{2} V_3 || D({M} \setminus\!\!\setminus \Sigma) || 
= \frac{1}{2} V_3 \left( || D(M_1) || + || D(M_2) || \right) \\
&=\vol(M_1) + \vol(M_2) = n\vol(\PP_1) + n\vol(\PP_2).
\end{align*}
\end{proof}

The following lemma is the basis for the decomposition of the graph orbifold
regions that will lead to to the lower bound.

\begin{lemma}\label{decomquad}
Let $\DD_i^j$ be a prism region with degree $n \geq 5$.  
\begin{enumerate}
\item If $\aa_1 = \bb_1 = \aa_2 = \bb_2 = \pi/2$, then for each $k$, $4 \leq k
\leq n-1$, such that $a_k$ and $b_k$ are not virtual edges, there exists a
geodesic quadrilateral containing $c_1$ and intersecting the face $F_k$
orthogonally.
\item If $\aa_1 = \bb_1 = \pi/2$ and $\aa_2 = \pi/3$ or $\bb_2 = \pi/3$,
then for each $k$, $3 \leq k \leq n-1$, there exists an acylindrical
quadrilateral that intersects the faces $F_1$ and $F_k$.  
\end{enumerate}
\end{lemma}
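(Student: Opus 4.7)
For part (1), the key geometric observation is that the hypothesis $\aa_1 = \bb_1 = \aa_2 = \bb_2 = \pi/2$ forces both quadrilateral faces $F_1$ and $F_2$ to be orthogonal to each of the two $n$--gon faces bounding the prism region. Since $c_1 = F_1 \cap F_2$ is the intersection of two geodesic planes each perpendicular to a given $n$--gon, the edge $c_1$ is itself perpendicular to both $n$--gon faces. I would then construct, via the common perpendicular in $\HH^3$, the unique geodesic plane $\Pi$ containing $c_1$ and meeting the plane carrying $F_k$ orthogonally; the assumption that $a_k$ and $b_k$ are not virtual guarantees $F_k$ lies in an actual geodesic plane. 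Because $\Pi$ contains a line perpendicular to each $n$--gon face, $\Pi$ is automatically perpendicular to each of these faces. Intersecting $\Pi$ with the prism region then yields a geodesic quadrilateral with sides $c_1$, an orthogonal segment in $F_k$, and geodesic arcs in the top and bottom $n$--gons. The constraint $k \geq 4$ together with the convexity of the horizontal faces ensures these arcs do not encounter any intermediate vertical edges, so the quadrilateral is embedded in the prism region.

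For part (2), since $c_1$ is no longer guaranteed perpendicular to the horizontal faces, I would construct the quadrilateral combinatorially rather than geodesically. Specifically, take a simple closed curve $\gamma$ on $\partial \PP$ composed of four arcs: one across $F_1$ from $a_1$ to $b_1$, one across the top $n$--gon from $a_1$ to $a_k$, one across $F_k$ from $a_k$ to $b_k$, and one across the bottom $n$--gon from $b_k$ to $b_1$. Because $3 \leq k \leq n-1$, the four edges $a_1, a_k, b_k, b_1$ share no common vertex, so they form a prismatic $4$--circuit in $\PP$, and $\gamma$ bounds an embedded topological quadrilateral $T$ that meets both $F_1$ and $F_k$ as required.

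The principal work is verifying that $T$ is acylindrical. I would argue by contradiction: a cylindrical configuration produces a prismatic $4$--circuit $\delta$ in $\PP \split T$ using two opposite edges of the new face corresponding to $T$ together with two edges of $\PP$ of dihedral angle $\pi/2$. Lifting $\delta$ back to $\PP$ yields a simple closed curve on $\partial \PP$ crossing two of the edges $\{a_1, a_k, b_k, b_1\}$ and two right-angled edges of the prism region. Using Lemma~\ref{redrepfordrs} to control the possible edge labels in the region, one argues that any such lifted curve must itself be a nontrivial prismatic $4$--circuit in $\PP$ distinct from $\gamma$, whose existence either contradicts the indecomposability of the prism region under the Bonahon--Siebenmann decomposition or forces a label configuration incompatible with the hypothesis $\aa_2 = \pi/3$ or $\bb_2 = \pi/3$. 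The principal obstacle I expect is enumerating the possible positions of the two right-angled edges with respect to $F_2$ carefully enough so that the $\pi/3$ label at $a_2$ or $b_2$ visibly rules out every potential cylindrical circuit.
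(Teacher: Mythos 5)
Your approach to part~(1) tracks the paper's closely: you both pass to the defining planes and build a geodesic plane through the line of $c_1$ that meets $\Pi_k$ orthogonally. One small omission: you invoke a ``common perpendicular'' to produce the plane $\Pi$, but such a perpendicular (and hence a plane through $g_1$ orthogonal to $\Pi_k$) exists only if the line $g_1$ containing $c_1$ is disjoint from $\Pi_k$. The paper makes this explicit, deducing disjointness of $g_1$ and $\Pi_k$ from the fact that $\Pi_1$ and $\Pi_2$ are both disjoint from $\Pi_k$ when $4 \leq k \leq n-1$ (citing Lemma~4.4 of \cite{volpoly}); you should record the same step. Your observation that $c_1$ is perpendicular to both $n$--gon faces is a nice extra piece of geometry, and your embeddedness remark lines up with the paper's appeal to non--obtuseness.

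Part~(2) has a genuine gap, which you yourself flag. Your combinatorial construction of the quadrilateral $T_k$ via the prismatic $4$--circuit through $a_1, a_k, b_k, b_1$ is fine, and this matches the paper. But the acylindricity proof is where the content lies, and your sketch does not reach the contradiction. The paper's argument is short and specific: if $T_k$ were cylindrical then the prismatic $4$--circuit $\sigma$ realizing cylindricity would cross two edges of the $T_k$--face plus two edges of dihedral angle $\pi/2$; because Lemma~\ref{redrepfordrs} and the hypothesis $\aa_2 = \pi/3$ or $\bb_2 = \pi/3$ force every non--virtual pair $(a_l,b_l)$ with $l \geq 2$ to carry at least one $\pi/3$ label, the two right--angled edges must both be \emph{virtual} edges, say $a$ and $b$. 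Then $a$, $b$, $a_1$, $b_1$ are four edges, all with dihedral angle $\pi/2$, that form a prismatic $4$--circuit in $\PP$ itself; its angle sum $2\pi$ violates condition~(5) of Andreev's theorem, so $\PP$ could not be a finite--volume non--obtuse hyperbolic polyhedron. Your proposal gestures at ``indecomposability under the Bonahon--Siebenmann decomposition'' and ``a label configuration incompatible with the hypothesis,'' but neither is the mechanism that actually closes the argument. The missing ideas are (i) forcing the two right--angled edges of $\sigma$ to be virtual via the $\pi/3$ constraint, and (ii) promoting them, together with $a_1,b_1$, to a right--angled prismatic $4$--circuit contradicting Andreev's theorem.
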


\begin{proof}
\begin{enumerate}
\item  For each $k \in \{ 4,\, \dots, \, n-1 \}$ such that $a_k$ and $b_k$
are not virtual edges, let $\Pi_k$ be the defining plane of the face of
$\DD_i^j$ containing $a_k$ and $b_k$.  Also, let $g_1 = \Pi_1 \cap \Pi_2$ be
the geodesic in which $c_1$ is contained.  It follows from the fact that
$\Pi_1$ and $\Pi_2$ are disjoint from $\Pi_k$ that
$g_1$ and $\Pi_k$ are disjoint. See, for example, Lemma 4.4 of
\cite{volpoly}. Hence there exists a geodesic plane, $\Pi,$
that contains $g_1$ and intersects $\Pi_k$ orthogonally.  That $\Pi$
intersects $\DD_i^j$ in a quadrilateral orthogonal to $F_k$ is a consequence
of the fact that all dihedral angles in $\PP$ are no more than $\pi/2$.  For
the case where $k = 4$ or $n-1$ and $a_k$ and $b_k$ are not virtual, the
quadrilaterals coincide with the faces $F_1$ and $F_2$.  
\item  Let $k \in
\{3,\, \dots, \, n-1 \}$.  Let $T_k$ be a quadrilateral that meets $F_1$ and
$F_k$.  If $T_k$ were cylindrical, the prismatic $4$--circuit realizing the
cylindricity, $\sigma$ must pass through two virtual edges contained of
$\DD_i^j$ because for all other pairs of non--virtual edges, $a_l$, $b_l,$ at
least one of the dihedral angles is $\pi/3$.  Suppose that the two edges of
$\sigma$ not in $T_k$ are $a$ and $b$.  Then there is a prismatic
$4$--circuit, $\sigma'$, passing through the edges $a$, $b$, $a_1$ and $b_1$,
all of which have dihedral angle $\pi/2$.  See Figure~\ref{extracir}.  This
contradicts Andreev's theorem, so $T_k$ must actually be acylindrical.
\begin{figure}
\labellist
\small\hair 2 pt
\pinlabel $T_k$ [t] at 199 0
\pinlabel $\partial{\DD_i^j}$ [t] at 105 31
\pinlabel $\sigma'$ [r] at 0 120
\pinlabel $\sigma$ [r] at 6 129
\pinlabel $a$ [tl] at 8 96
\pinlabel $b$ [tl] at 8 50
\pinlabel $a_1$ [t] at 232 73
\pinlabel $b_1$ [t] at 232 27
\endlabellist
\begin{center}
\scalebox{.85}{\includegraphics{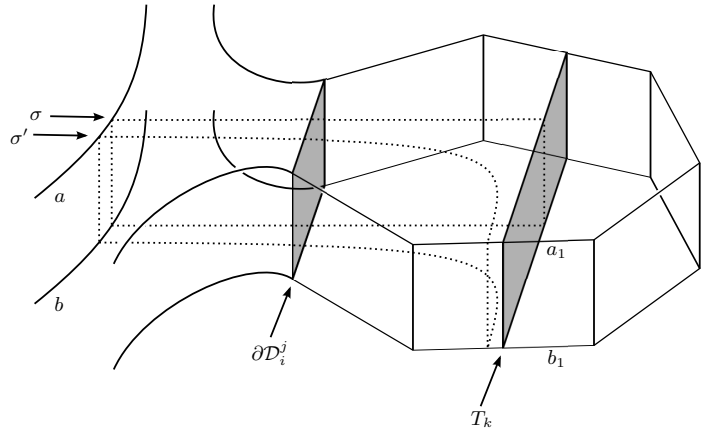}}
\end{center}
\caption{This illustrates part 2 of the proof of Lemma~\ref{decomquad}}
\label{extracir}
\end{figure}
\end{enumerate}
\end{proof}

We can now prove the lower bound on the volume of a prism region of $\PP$.

\begin{theorem}\label{mainthm}
Suppose that $\DD_i^j$ is a prism region of $\PP$ that contains $V \geq 2$
vertices of $\PP$.  Then, except for in the cases shown in Figure~\ref{badguys},
\begin{equation}
	\vol(\DD_i^j) >
		\begin{cases}
		 \left( \frac{V}{2} -3 \right) \cdot \vol\left( C_1 \left(
			\lambda \right)\right),& \text{if }V\geq 8\\
		C_1(\lambda), & \text{if }V = 2,\,4 \text{ or } 6,
		\end{cases}
\end{equation}
where $\lambda \in (0 , \pi/2)$ depends on $\DD_i^j$.
Moreover, if $V \geq 10$, then 
$$\vol(\DD_i^j) > \left( \frac{V}{2} -3 \right) \cdot \vol\left( C_1 \left(
\frac{\pi}{3} \right)\right), $$
where the value of $\vol\left( C_1 \left( \frac{\pi}{3} \right)\right)$ is
approximately $.324423$.
\end{theorem}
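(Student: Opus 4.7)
The plan is to reduce $\DD_i^j$ to a controlled configuration using Lemma~\ref{redrepfordrs}, cut it into Lambert cubes following the strategy of Lemma~\ref{ddecomp}, and then apply Lemmas~\ref{cubelemma} and~\ref{convex} to bound the total volume below in terms of the volume of a single $C_1$--cube evaluated at an average parameter.

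First I would apply Lemma~\ref{redrepfordrs} to reduce to the case where every dihedral angle of $\DD_i^j$ is $\pi/2$ or $\pi/3$, every non-virtual edge $c_k$ has angle $\pi/2$, and the pairs $(\aa_k,\bb_k)$ fall into the four patterns listed there; by Schl\"{a}fli's formula this angle-nondecreasing deformation does not increase the volume. Next I would apply Lemma~\ref{decomquad}. When two adjacent pairs satisfy $\aa_1=\bb_1=\aa_2=\bb_2=\pi/2$, part (1) produces geodesic quadrilaterals through $c_1$ orthogonal to each non-virtual face $F_k$, and repeating the argument of Lemma~\ref{ddecomp} these planes decompose $\DD_i^j$ into $r$ Lambert cubes $C_1(\mu_l)$ and $s$ cubes $C_2(\nu_l)$ with $\sum_l \mu_l + \sum_l \nu_l = \pi/2$.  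If instead $\aa_2=\pi/3$ or $\bb_2=\pi/3$, part (2) of Lemma~\ref{decomquad} supplies acylindrical quadrilaterals and Lemma~\ref{splitacyl} gives volume subadditivity under splitting, reducing to the previous case after cutting.

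To translate the decomposition into a bound in terms of $V$, observe that if $\DD_i^j$ has order $n$ then the decomposition produces $r+s = n-3$ cubes while $\DD_i^j$ has $2n$ combinatorial vertices, only the non--virtual ones of which count toward $V$; hence $r+s \geq V/2-3$.  By Lemma~\ref{cubelemma} we may replace each $V_2(\nu_l)$ by the smaller $V_1(\nu_l)$ in the volume sum, and the convexity of $V_1$ (Lemma~\ref{convex}) together with Jensen's inequality yields
$$\vol(\DD_i^j) \;\geq\; \sum_l V_1(\mu_l) + \sum_l V_1(\nu_l) \;\geq\; (r+s)\,V_1(\lambda) \;\geq\; \left(\tfrac{V}{2}-3\right) V_1(\lambda),$$
where $\lambda = \pi/(2(r+s))$ is the common average.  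For $V \in \{2,4,6\}$ the decomposition reduces to a single cube, and the same argument gives the stated one--cube bound (invoking Lemma~\ref{cubelemma} once more if that cube is of $C_2$ type).  Finally, when $V \geq 10$ one has $r+s \geq 2$, so $\lambda \leq \pi/4 < \pi/3$; the monotonicity of $V_1$, which follows from Schl\"{a}fli's formula and the positivity of $\rho_1(t)$, then upgrades the bound to $V_1(\pi/3) \approx .324423$.

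The main obstacle will be the combinatorial bookkeeping: one must identify the excluded ``bad cases'' of Figure~\ref{badguys} and verify the inequality $r+s \geq V/2-3$ in the presence of virtual edges and the mixed angle patterns from Lemma~\ref{redrepfordrs}.  In particular, one must confirm that the acylindrical cuts of Lemma~\ref{decomquad}(2) compose cleanly with the $C_1/C_2$ decomposition, and that when a shared-edge pair is forced into the $(\pi/3,\pi/3)$ configuration, as in Figure~\ref{33eg}, the decomposition still produces the claimed number of cubes with angle parameters whose sum is bounded so that $\lambda \leq \pi/3$ in the $V \geq 10$ regime.
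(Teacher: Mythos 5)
Your plan correctly identifies the overall strategy used in the paper: reduce $\DD_i^j$ via Lemma~\ref{redrepfordrs}, cut along quadrilaterals supplied by Lemma~\ref{decomquad} (geodesic when $\aa_1=\bb_1=\aa_2=\bb_2=\pi/2$, acylindrical otherwise, in which case Lemma~\ref{splitacyl} gives the needed subadditivity), slice into $C_1$/$C_2$ cubes, and close via Lemma~\ref{cubelemma} and the convexity argument of Theorem~\ref{altmin}.  The $V\geq 10$ step also matches: $r+s\geq 2$ forces the average parameter below $\pi/3$, and monotonicity of $V_1$ finishes.

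However, the central counting step is not correct as written.  You assert that the decomposition of an order-$n$ prism region always yields $r+s=n-3$ cubes and then deduce $r+s\geq V/2-3$ merely because $V\leq 2n$.  The equality $r+s=n-3$ is Lemma~\ref{ddecomp}, which applies to genuine basic $n$-prisms.  A prism region $\DD_i^j$ with virtual edges is not a hyperbolic $n$-prism: the quadrilaterals $T_k$ of Lemma~\ref{decomquad}(1) only exist for indices $k$ with $a_k,b_k$ non-virtual, and a slab bounded on one side by a virtual (non-geodesic) face is not a $C_1$- or $C_2$-cube and cannot be fed into the Jensen estimate.  So the actual cube count can be strictly less than $n-3$, and your inequality $r+s\geq V/2-3$ is reached by a false premise.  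The paper instead proves the needed bound directly: it counts cubes indexed by the set $K=\{3\leq k\leq n-1 : \text{both endpoints of }c_k\text{ are vertices of }\PP\}$, shows $|K|\geq V/2-3$ by a separate combinatorial argument, and in the case $\aa_2$ or $\bb_2=\pi/3$ first cuts along an acylindrical quadrilateral and then runs the count in each piece (yielding $V/2-2$ cubes).  This counting is precisely the bookkeeping you flag at the end but do not carry out; it, together with the case-by-case identification of the Figure~\ref{badguys} exceptions for $V\in\{2,4,6\}$ (which occur exactly when no acylindrical quadrilateral is available to start the cut), is where the substantive content of the proof lies and must be supplied before the argument is complete.
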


\begin{figure}
\labellist
\small\hair 2 pt
\pinlabel $\dots$ at 218 92
\pinlabel $\frac{2}{2}$ [tr] at 164 64
\pinlabel $\frac{2}{2}$ [t] at 218 32
\pinlabel $\frac{2}{2}$ [br] at 340 103
\pinlabel $\frac{2}{2}$ [bl] at 404 103
\pinlabel $\rotatebox{-78}{\dots}$ at 11 43
\pinlabel $\rotatebox{78}{\dots}$ at 115 43
\pinlabel $\frac{2}{2}$ [bl] at 96 103
\pinlabel $\dots$ at 374 0
\endlabellist
\begin{center}
\scalebox{.78}{\includegraphics{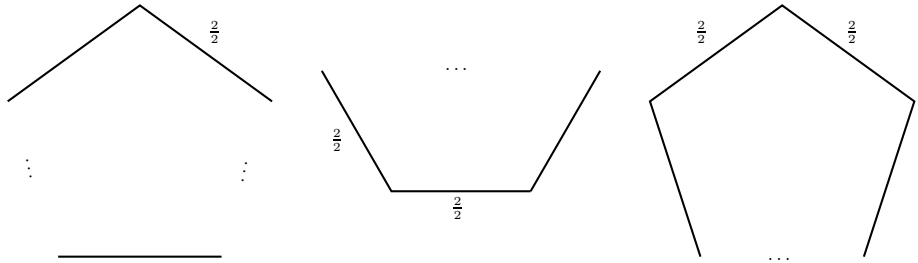}}
\end{center}
\caption{Theorem~\ref{mainthm} does not provide a lower bound on the volume in
these examples.  The ellipses may be replaced with any odd number of edges
that
alternate between dotted and solid and begin with a dotted edge.  Recall that
these diagrams are explained in Figure~\ref{prismwithbdy}}
\label{badguys}
\end{figure}

\begin{proof}
In each case, we will give a lower bound on the number of cubes of the form
$C_1$ or $C_2$ into which $\DD_i^j$ can be decomposed.  The proof finishes in
each case by applying the convexity argument used to prove Theorem~\ref{altmin}.

First suppose that $V \geq 8$, $\aa_1 = \aa_2 = \bb_1 = \bb_2 = \pi/2$, and
that $\DD_i^j$ is of order $n$.  Lemma~\ref{decomquad} says that for each $k
\in \{ 4,\, \dots, \, n-1 \}$ such that $a_k$ and $b_k$ are not virtual
edges, there exists a geodesic quadrilateral $T_k$ that contains $c_1$ and
intersects $F_k$ orthogonally.  Since $\DD_i^j$ contains at least $8$
vertices of $\PP$, there are at least $V/2 -3$ values of $k$, $4 \leq k \leq
n-1,$ such that $a_k$ and $b_k$ are not virtual, and such that $a_{k-1}$ and
$b_{k-1}$ are not virtual or $a_{k+1}$ and $b_{k+1}$ are not virtual.  If
$a_k$ and $a_{k-1}$ are both not virtual, then just as in the proof of
Lemma~\ref{ddecomp}, there is a cube $C_1(\mu)$ or $C_2(\nu)$ formed by
$T_k$, $T_{k-1}$, $F_k$ and $F_{k-1}$, for some $\mu,\,\nu \in (0,\pi/2)$.  A
similar statement is true if it is $a_{k+1}$ that is not virtual.  This
procedure gives a cube for each $k \in K$ where $$K = \{ 3\leq k \leq n-1 \,
| \, \text{the endpoints of }c_k\text{ are vertices of } \PP \}.$$ For each
of the at least $V/2 -3$ values of $k \in \{4, \dots, n-1\}$ such that $a_k$
and $b_k$ are not virtual, and such that $a_{k-1}$ and $b_{k-1}$ are not
virtual or $a_{k+1}$ and $b_{k+1}$ are not virtual, the endpoints of
$c_{k-1}$ or $c_k$, respectively, are vertices of $\PP$.  Therefore $|K| \geq
V/2 -3$, which completes this case of the proof.

Now suppose that $V\geq 8$ and either $a_2$ and $b_2$ are virtual,
$(\aa_2,\bb_2) = (\pi/2, \pi/3)$, or $(\aa_2, \bb_2) = (\pi/3, \pi/2)$.
Choose any value of $k\in \{ 3, \, \dots, \, n-1\}$ such that $a_k$ and $b_k$
are not virtual edges and $a_{k-1}$ and $b_{k-1}$ or $a_{k+1}$ and $b_{k+1}$
are virtual edges.  Suppose for concreteness that $a_{k-1}$ and $b_{k-1}$ are
virtual edges.  Then, there exists an acylindrical topological quadrilateral,
$T_k$, that intersects $F_1$ and $F_k$ by Lemma~\ref{decomquad}.  The prism
$\DD_i^j$, as well as the entire polyhedron, $\PP,$ can be split along this
quadrilateral.  By Lemma~\ref{splitacyl}, each component of $\PP \split T_k$
has a hyperbolic structure with $\PP \cap T_k$ totally geodesic and such that
the sum of the volume of the components is no more than the volume of $\PP$.
The prism region $\DD_i^j$ splits into two prism regions, each of which have a
pair of adjacent faces with $\aa_i$ and $\bb_i$ equal to $\pi/2$.  The
decomposition described in the previous case can now be applied to each
component.  The two resulting components yield the fewest cubes when each of
the edges $c_{n-1}$, $c_n$, $c_1$ and $c_2$ are not virtual.  In this case,
$\DD_i^j$ decomposes into $V/2-2$ cubes.

We now consider the case where $V=2,\, 4$ or $6$.  The argument is a case--by--case
analysis of the possible vertex configurations.  We will identify a single
cube of the form $C_1(\mu)$ or $C_2(\mu)$ in each case.  The argument then
finishes by using the fact that $\vol(C_2(\mu)) > \vol(C_1(\mu))$.

Suppose that $V=2$.  There are two cases here.  First, suppose that
$\aa_l = \bb_l = \pi/2,$ where the labeling is as in Figure~\ref{2verts}.  In this
case, Lemma~\ref{decomquad} implies that there exists an acylindrical
quadrilateral $T_1$.  The component of $\DD_i^j \split T_1$ containing the vertices of
$\PP$ then has volume at least $C_1(\mu)$ for some $\mu \in (0, \pi/2)$ by
applying the argument from above.  In the other case where $\aa_1 = \bb_1 =
\pi/2$ or $\aa_1 = \bb_1 = \pi/2$, there is no acylindrical quadrilateral along which to decompose.  

\begin{figure}
\labellist
\small\hair 2 pt
\pinlabel $\rotatebox{-78}{\dots}$ at 11 43
\pinlabel $\rotatebox{78}{\dots}$ at 115 43
\pinlabel $\frac{a_1}{b_1}$ [bl] at 96 102
\pinlabel $\frac{a_2}{b_2}$ [br] at 32 102
\pinlabel $\frac{a_l}{b_l}$ [t] at 64 0
\endlabellist
\begin{center}
\scalebox{1}{\includegraphics{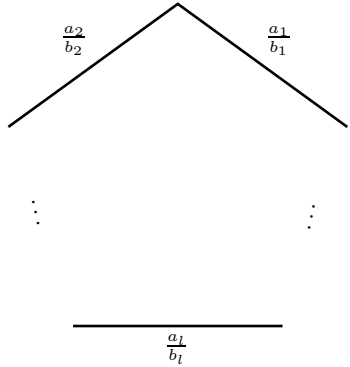}}
\end{center}
\caption{The $2$--vertex case}
\label{2verts}
\end{figure}

Next suppose that $V=4$.  There are two possible configurations of vertices
here.  Either the two pairs of vertices of $\PP$ are separated by boundary
components or they are not.  See Figure~\ref{4goodguys}.  The previous techniques
suffice to find a cube except for in the case where $\aa_1 = \bb_1 = \aa_2 =
\bb_2$ and the vertices are not separated by virtual edges as in the
middle diagram of Figure~\ref{badguys}, where there is no acylindrical quadrilateral.

\begin{figure}
\labellist
\small\hair 2 pt
\pinlabel $\dots$ at 64 95
\pinlabel $\rotatebox{90}{\dots}$ at 164 71 
\pinlabel $\rotatebox{90}{\dots}$ at 280 71 
\pinlabel $\frac{a_1}{b_1}$ [tr] at 9 71
\pinlabel $\frac{a_2}{b_2}$ [t] at 64 46
\pinlabel $\frac{a_3}{b_3}$ [tl] at 116 71 
\pinlabel $\frac{a_1}{b_1}$ [tr] at 192 17
\pinlabel $\frac{a_2}{b_2}$ [t] at 253 17
\pinlabel $\frac{a_l}{b_l}$ [br] at 192 119
\pinlabel $\frac{a_{l+1}}{b_{l+1}}$ [bl] at 253 119
\endlabellist
\begin{center}
\scalebox{.8}{\includegraphics{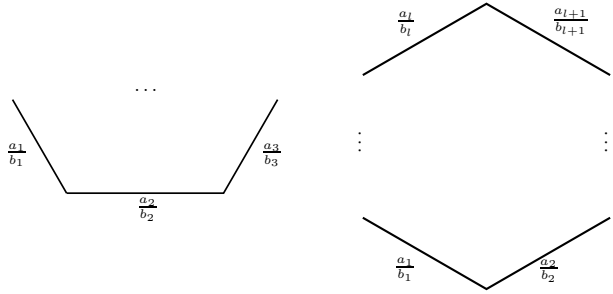}}
\end{center}
\caption{The $4$--vertex case}
\label{4goodguys}
\end{figure}

When $V=6$, there are three possible configurations of vertices.  Either none
are separated from any other by virtual edges, a single pair is isolated
or all three are mutually isolated.  See Figure~\ref{6cases}. Again the previous
methods produce at least one cube in all cases except for the case of the
leftmost diagram in Figure~\ref{6cases} where $\aa_2 = \bb_2 = \aa_3 = \bb_3 = \pi/2$
where there is no acylindrical quadrilateral.

\begin{figure}
\labellist
\small\hair 2 pt
\pinlabel $\dots$ at 63 1
\pinlabel \rotatebox{75}{$\dots$} at 166 79
\pinlabel \rotatebox{-75}{$\dots$} at 284 79
\pinlabel \rotatebox{60}{$\dots$} at 334 98
\pinlabel \rotatebox{-60}{$\dots$} at 442 98
\pinlabel $\dots$ at 387 1
\pinlabel $\frac{a_1}{b_1}$ [tr] at 10 40
\pinlabel $\frac{a_2}{b_2}$ [br] at 29 100
\pinlabel $\frac{a_3}{b_3}$ [bl] at 93 104
\pinlabel $\frac{a_1}{b_1}$ [tr] at 177 25
\pinlabel $\frac{a_2}{b_2}$ [t] at 225 1
\pinlabel $\frac{a_3}{b_3}$ [tl] at 277 26
\pinlabel $\frac{a_4}{b_4}$ [tl] at 117 39
\pinlabel $\frac{a_l}{b_l}$ [bl] at 252 119
\pinlabel $\frac{a_{l+1}}{b_{l+1}}$ [br] at 199 119
\pinlabel $c_l$ [b] at 226 130
\pinlabel $\frac{a_{1}}{b_{1}}$ [r] at 323 54
\pinlabel $\frac{a_{2}}{b_{2}}$ [tr] at 345 16
\pinlabel $\frac{a_l}{b_l}$ [tl] at 430 15
\pinlabel $\frac{a_{l+1}}{b_{l+1}}$ [l] at 452 53
\pinlabel $\frac{a_m}{b_m}$ [bl] at 411 125
\pinlabel $\frac{a_{m+1}}{b_{m+1}}$ [br] at 367 126
\pinlabel $c_1$ [r] at 0 77
\endlabellist
\begin{center}
\scalebox{.76}{\includegraphics{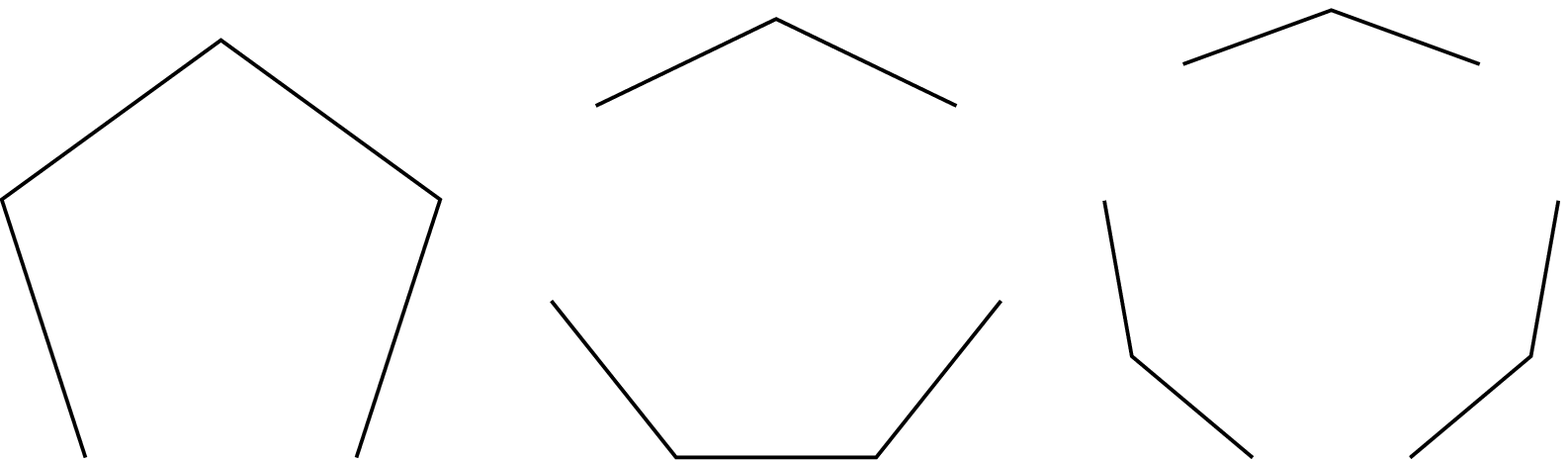}}
\end{center}
\caption{The $6$--vertex case.  The ellipses may be filled in with either a dotted segment or an
alternating sequence of dotted and solid segments of odd length beginning with a
dotted segment}
\label{6cases}
\end{figure}

The second statement follows from the same convexity argument and the fact that
for $V \geq 10$, there are at least $2$ cubes in the decomposition.
\end{proof}

\section{Upper bound}\label{upperbound}

The upper bounds in Theorems~\ref{superweak} and \ref{prismsum} are
applications of the upper bounds on the volume of
right--angled hyperbolic polyhedra that were proved in \cite{volpoly}:

\begin{theorem}(\cite{volpoly}) 
\label{pi2generalupper} If $\PP$ is a $\pi/2$--equiangular
hyperbolic polyhedron, $N_{\infty}$ ideal
vertices, and $N_F\geq 1$ finite vertices, then 
$$ \vol(\PP) < \frac{N_{\infty}-1}{2}\cdot V_8 +
 \frac{5 N_F}{8}\cdot V_3 .$$ 
If all vertices of $\PP$ are ideal, then
$$\vol(\PP) < \frac{N_{\infty} - 4}{2}\cdot V_8.$$
\end{theorem}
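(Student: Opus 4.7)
The plan is to establish the all--ideal statement first by induction, then deduce the general statement by cutting off a standard piece near each finite vertex.

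For the all--ideal case, I would argue by strong induction on $N_\infty$. The minimum--vertex ideal right--angled polyhedron is the regular ideal octahedron, with $N_\infty = 6$ and $\vol = V_8 = \frac{6-4}{2} V_8$, which is the unique equality case; the strict inequality therefore only needs to be established for $N_\infty \geq 8$. By Andreev's theorem (Theorem~\ref{and}) any right--angled ideal polyhedron with $N_\infty \geq 8$ must contain a prismatic $4$--circuit $\gamma$. A polar--hyperplane argument directly analogous to Lemma~\ref{turn} shows that $\gamma$ is realized by a totally geodesic ideal right--angled quadrilateral $Q$ embedded in $\PP$ and meeting the four faces through which $\gamma$ passes orthogonally. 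Splitting $\PP$ along $Q$ produces two right--angled hyperbolic polyhedra $\PP_1,\PP_2$, each with all ideal vertices, and with $|\PP_1|_\infty + |\PP_2|_\infty = N_\infty + 4$ (the $4$ ideal vertices of $Q$ are shared). Since $Q$ is totally geodesic, $\vol(\PP)=\vol(\PP_1)+\vol(\PP_2)$, and the inductive hypothesis gives
$$\vol(\PP) \;<\; \frac{(|\PP_1|_\infty-4)+(|\PP_2|_\infty-4)}{2}\,V_8 \;=\; \frac{N_\infty - 4}{2}\,V_8,$$
as desired. One must handle the base case in which some $\PP_i$ is itself an octahedron, but that case is an equality that is absorbed into the strict inequality on the other component.

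For the general case with $N_F \geq 1$, the plan is to extract a standard orthoscheme at each finite vertex. At a finite vertex $v$, exactly three mutually orthogonal faces of $\PP$ meet. Choose a geodesic plane $\Pi_v$ that is perpendicular to all three of these faces; the resulting piece $T_v \subset \PP$ is a right--angled tetrahedral orthoscheme with an ideal apex, and an explicit computation (of the type underlying Kellerhals' formula, Theorem~\ref{kellerhals}) bounds $\vol(T_v) < \tfrac{5}{8}V_3$. Removing all the $T_v$ from $\PP$ yields a new right--angled polyhedron $\PP'$ in which each former finite vertex of $\PP$ has been replaced by an ideal triangular face with three degree--$4$ ideal vertices. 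Combinatorial bookkeeping gives $|\PP'|_\infty \leq N_\infty + 3 N_F$, and applying the all--ideal bound to $\PP'$ and then adding $\sum_v \vol(T_v) < N_F \cdot \tfrac{5}{8}V_3$ yields
$$\vol(\PP) \;<\; \frac{(N_\infty + 3N_F) - 4}{2}V_8 \;+\; \frac{5 N_F}{8}V_3,$$
which, after absorbing the $3N_F \cdot V_8/2$ term into the statement (here one uses that the cut does not actually add all $3N_F$ new vertices independently, but only a net linear correction with leading coefficient producing the claimed $\tfrac{N_\infty - 1}{2}$ constant), matches the claimed bound.

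The main obstacle is the bookkeeping in the second step: one must confirm that the orthogonal plane $\Pi_v$ used to cut off $T_v$ exists and actually intersects $\PP$ inside the star of $v$ without running into other finite vertices or creating prismatic obstructions, so that $\PP'$ is itself realized as a right--angled hyperbolic polyhedron to which the ideal bound applies. The existence of $\Pi_v$ reduces, in the projective model, to the statement that the polar line to the three defining planes at $v$ carries a geodesic plane perpendicular to all three, which holds precisely because the three dihedral angles at $v$ are right. The second delicate point is sharpening the coefficient on $N_\infty$ from $(N_\infty + 3N_F - 4)/2$ to $(N_\infty - 1)/2$; this requires observing that each truncation identifies several of the $3N_F$ newly ideal vertices with vertices that were already counted, so that the net increase in effective ideal vertices is linear with the smaller coefficient claimed.
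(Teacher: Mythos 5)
The statement is imported verbatim from \cite{volpoly}; the present paper does not reprove it, so there is no internal proof to compare against. Your proposal, however, has two independent errors, each fatal on its own.

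The first is the crux of your ideal case. You claim that Andreev's theorem forces any right--angled ideal polyhedron with $N_\infty \geq 8$ to contain a prismatic $4$--circuit $\gamma$, and you propose to split along a geodesic quadrilateral realizing $\gamma$. This is exactly backwards. Condition (5) of Theorem~\ref{and} requires that along any prismatic $4$--circuit the dihedral angles sum to \emph{strictly less than} $2\pi$; in a $\pi/2$--equiangular polyhedron the sum along any $4$--circuit is exactly $2\pi$. Hence a finite--volume right--angled hyperbolic polyhedron has \emph{no} prismatic $4$--circuit whatsoever. Indeed, the entire decomposition machinery of Sections~\ref{algbs}--\ref{C:decomp} exists precisely to eliminate prismatic $3$-- and $4$--circuits so that the remaining pieces can be deformed \emph{to} right angles; right--angled polyhedra are the terminal objects of that deformation and admit no further splitting of the kind you describe. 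With no quadrilateral to cut along, your induction never begins.

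The second error is the truncation step at finite vertices. A finite vertex $v$ of $\PP$ lies in the interior of $\HH^3$, so its polar, computed in the projective model, lies outside $\overline{\HH}^3$ and does not give a real geodesic plane. Equivalently, in the Lorentz model the unit normals of the three mutually orthogonal faces at $v$ span a space--like $3$--plane whose Lorentz--orthogonal complement is time--like; there is no space--like vector in that complement, hence no geodesic plane perpendicular to all three faces. The orthoscheme $T_v$ you want to cut off simply does not exist (the polar--hyperplane construction of Lemma~\ref{turn} works only because the relevant intersection point there is hyperideal). The ``full truncation'' $\wh P$ used in Section~\ref{upperbound} is a purely \emph{combinatorial} replacement of each finite degree--$3$ vertex by a triangle; Lemma~\ref{hypwhp} verifies that the result satisfies Andreev's conditions, and the geometric realization $\wh \PP$ arises only as a \emph{limit} of an angle--decreasing deformation of $\PP$ (proof of Theorem~\ref{genup}, invoking Thurston's generalized Dehn filling theorem), not by slicing $\PP$ with planes inside $\PP$. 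The volume comparison in that argument is Schl\"{a}fli monotonicity along the deformation, not additivity of pieces, and there is no ``$\vol(T_v)$'' to add back. Your acknowledgement that the $3N_F$ bookkeeping is unresolved is symptomatic of this missing geometric cut rather than a gap that sharper accounting could close.

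A minor further remark: as stated the all--ideal bound is a strict inequality, yet the regular ideal octahedron ($N_\infty = 6$) attains equality; that borderline case is a known feature of the statement in \cite{volpoly} and is separate from the errors above.
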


To apply this theorem, we exhibit a volume--nondecreasing deformation from
any given non--obtuse hyperbolic polyhedron to a right--angled polyhedron.

Let $P$ be an abstract polyhedron.  Define $\MN_4(P)$ to
be the set of degree $4$ vertices, $\MN_2(P)$ to be the set of degree $3$
that are adjacent to three vertices of $\MN_4(P)$, and $\MN_3(P)$ to be the
degree $3$ vertices that are not contained in $\MN_2(P).$  For $i=2,3,4,$
let
$n_i(P) = |\MN_i(P)|$.  Define $\ME_{33}(P)$ to be the set of edges of $P$
with each endpoint in $\MN_3(P)$ and $\ME_{34}(P)$ to be the set of edges of
$P$ with one endpoint in $\MN_3(P)$ and the other endpoint in $\MN_4(P)$.
Define $E_{ij}(P) = |\ME_{ij}(P)|$.  Reference to $P$ will be suppressed when
the context is clear.  An observation that will prove useful is that any
edge not in $\ME_{33}$ is labeled by $\pi/2$.

The following is the main theorem of this section.

\begin{theorem}\label{genup}
Let $\PP$ be a non--obtuse hyperbolic polyhedron that realizes the labeled
abstract polyhedron $(P,\Theta)$.  Then
$$\vol(\PP) < \frac{n_4 + E_{33} -1}{2} \cdot V_8 + \frac{5(E_{34} + n_2)}{8} \cdot
V_3.$$
\end{theorem}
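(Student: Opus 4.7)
The plan is to bound $\vol(\PP)$ above by constructing an auxiliary right--angled hyperbolic polyhedron $\PP^+$ having exactly $n_4 + E_{33}$ ideal vertices and $n_2 + E_{34}$ finite vertices, satisfying $\vol(\PP) \le \vol(\PP^+)$, and then applying Theorem~\ref{pi2generalupper} directly to $\PP^+$. The key structural observation that makes the counting work is the following: in any non--obtuse hyperbolic polyhedron, every edge incident to an $\MN_4$ vertex automatically carries dihedral angle exactly $\pi/2$, since the four angles at a degree $4$ ideal vertex sum to $2\pi$ by Andreev's theorem and each is at most $\pi/2$. Because $\MN_2$ vertices are adjacent only to $\MN_4$ vertices by definition, the only edges of $\PP$ that can carry dihedral angle strictly less than $\pi/2$ are precisely those in $\ME_{33}$.

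I would build $\PP^+$ by a piecewise--smooth angle--nonincreasing deformation within $\whop(P)$, carried out in three stages. First, continuously decrease $\Theta(e)$ along every $e\in\ME_{33}$; by Schl\"afli's formula this is volume--nondecreasing. As these angles shrink, the angle sum at each endpoint of an $\ME_{33}$ edge drops, and the vertex passes from finite through ideal to hyperideal by the link criterion of Theorem~\ref{andbb}. Second, at the moment each $\MN_3$ vertex becomes hyperideal, truncate it by its polar hyperplane (as in Lemma~\ref{turn}); the polar plane meets each adjacent face orthogonally, so every newly introduced edge carries dihedral angle $\pi/2$. Third, continue the angle decrease until $\Theta(e)\to 0$, at which limit each $\ME_{33}$ edge degenerates: its two polar--truncation endpoints coalesce into a single vertex where four faces (the two original faces that met at the collapsed edge, together with the two polar--truncation triangles) meet with four right angles summing to $2\pi$, which by the link criterion is a degree $4$ ideal vertex.

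A direct vertex count of the limit $\PP^+$ then yields exactly the claimed numbers. The $n_4$ original $\MN_4$ vertices survive as ideal vertices, and each $\ME_{33}$ edge contributes one new degree $4$ ideal vertex, for $n_4+E_{33}$ ideal vertices in total. Each $\MN_2$ vertex remains untruncated (all three of its incident edges already carry angle $\pi/2$, so its link is unaffected by the deformation) and persists as a degree $3$ finite vertex. Each $\ME_{34}$ edge contributes the surviving polar--truncation vertex on its $\MN_3$ end, at which three incident angles are all $\pi/2$, so it is finite; these together give $n_2+E_{34}$ finite vertices. Since every remaining edge of $\PP^+$ carries dihedral angle $\pi/2$, the polyhedron $\PP^+$ is right--angled, and Theorem~\ref{pi2generalupper} supplies the bound.

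The hard part will be the rigorous justification of the deformation and the realizability of $\PP^+$: one must verify that the conditions of Theorem~\ref{andbb} remain satisfied at every stage of the deformation (in particular that prismatic--circuit strict inequalities are preserved as the $\ME_{33}$ angles shrink), that the limit polyhedron $\PP^+$ itself satisfies the right--angled version of Andreev's theorem (no prismatic $3$-- or $4$--circuits are created by the truncation and collapse procedure), that the volume is continuous through the combinatorial transitions at vertex truncations and at the edge--collapse limit (invoking Rivin's extension~\cite{rivincont} of Milnor's continuity conjecture as in the proof of Corollary~\ref{aicor}), and that the collapse limit of each $\ME_{33}$ edge is genuinely a single degree $4$ ideal vertex rather than a more degenerate configuration. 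These continuity and non--degeneracy checks are the principal technical content the proof must supply.
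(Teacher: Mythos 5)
Your proposal is correct and follows essentially the same route as the paper: you shrink the $\ME_{33}$ angles to zero along a Schl\"afli--monotone deformation (the paper's $\Theta_t$, realized as $(P^\vee,\Theta_t^\vee)$), truncate the resulting hyperideal $\MN_3$ vertices by polar hyperplanes, pass to the full--truncation limit $\widehat{\PP}$ using Rivin's continuity and Thurston's Dehn filling arguments, and apply Theorem~\ref{pi2generalupper} to the limiting right--angled polyhedron with $n_4+E_{33}$ ideal and $n_2+E_{34}$ finite vertices. The ``principal technical content'' you flag at the end --- verifying that the limit polyhedron satisfies the right--angled Andreev conditions --- is precisely the paper's Lemma~\ref{hypwhp}.
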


Let $(P,\Theta)$ be a labeled abstract polyhedron.  Define the \textbf{full
truncation} of $(P,\Theta)$ to be the right--angled abstract polyhedron 
$\widehat P$ obtained by replacing each vertex $v$ in $\MN_3$ by the triangle
formed by the midpoints of the edges entering $v$.  Each edge in $\ME_{33}$
is collapsed by this procedure.  See Figure~\ref{fulltrunc} for an example.
In the following lemma, we show that realizability of $(P,\Theta)$ implies
realizability of $(\wh P, \pi/2)$.

\begin{figure}
\labellist
\small\hair 2pt
\endlabellist
\begin{center}
\scalebox{1}{\includegraphics{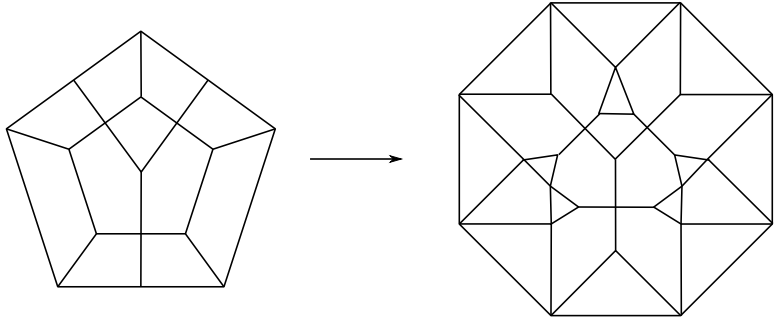}}
\end{center}
\caption{Full truncation}
\label{fulltrunc}
\end{figure}

\begin{lemma}\label{hypwhp}
If $(P,\Theta)$ is realizable as a hyperbolic polyhedron, then $(\widehat P,
\pi/2)$ is also realizable as a hyperbolic polyhedron.
\end{lemma}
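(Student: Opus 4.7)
The plan is to verify that $(\widehat{P}, \pi/2)$ satisfies the hypotheses of Andreev's theorem (Theorem~\ref{and}). Conditions (1)--(3) on vertex degrees and local angle sums are immediate: every vertex of $\widehat{P}$ has degree $3$ (a preserved $\MN_2$-vertex or a non-collapsed vertex of some truncation triangle $T_v$) or degree $4$ (a preserved $\MN_4$-vertex or a midpoint at which an $\ME_{33}$-edge has collapsed, which then meets two edges from each of the two adjacent truncation triangles), and with every label equal to $\pi/2$ the angle sums are $3\pi/2 \geq \pi$ and $2\pi$ respectively.

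The key combinatorial observations driving the rest of the proof are: (i) every $v \in \MN_3$ has at least one neighbor in $\MN_3$, because $v \notin \MN_2$ forces some neighbor outside $\MN_4$ and $\MN_2$-vertices have only $\MN_4$-neighbors; and (ii) every edge of $P$ incident to a $\MN_4$-vertex is forced to be labeled $\pi/2$ by the degree-$4$ angle sum condition, so only $\ME_{33}$-edges may have $\Theta$-label strictly less than $\pi/2$. I will use these to rule out prismatic $3$- and $4$-circuits in $\widehat{P}$. For condition (4): two truncation triangles meet in at most a single collapsed-midpoint vertex, so the three faces of any prismatic $3$-circuit of $\widehat{P}$ must be original faces of $P$. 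If they share a vertex in $P$, that vertex must be a truncated $\MN_3$-vertex, and by (i) one of the three edges at $v$ collapses, breaking the circuit. If they do not share a vertex in $P$, they form a prismatic $3$-circuit there with $\Theta$-sum $< \pi$; since a sum with two labels equal to $\pi/2$ would already be at least $\pi$, at least two of the three labels are strictly less than $\pi/2$, and (ii) then forces those edges into $\ME_{33}$, so they collapse. For condition (5), the analogous argument handles the case of a prismatic $4$-circuit already present in $P$: the label-sum condition $<2\pi$ forces at least one $\ME_{33}$-edge, which collapses in $\widehat{P}$.

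The main obstacle is the remaining subcase of condition (5): a $4$-cycle $F_1 F_2 F_3 F_4$ in $P^*$ that is non-prismatic in $P$ because two consecutive circuit edges $e_{12}, e_{23}$ share a vertex $v \in \MN_3$ which then gets truncated. Survival of $e_{12}$ and $e_{23}$ in $\widehat{P}$ forces their opposite endpoints $u_{12}, u_{23}$ to lie in $\MN_4$, and by (i) the third edge $e_*$ at $v$ then connects $v$ to another $\MN_3$-vertex $u_*$, hence collapses to a midpoint shared by $T_v$ and $T_{u_*}$. A careful local analysis of the faces at $u_*$ (using that $F_1$ and $F_3$ both meet $u_*$ and that the third face there is either $F_4$, in which case $e_{14}$ and $e_{34}$ share a collapsed midpoint vertex on $T_{u_*}$, or not $F_4$, in which case the preserved $\MN_4$-structure at $u_{12}$ or $u_{23}$ supplies a shared endpoint between consecutive circuit edges) shows that two consecutive edges of the circuit in $\widehat{P}$ always share either a preserved vertex or a new midpoint vertex, so the circuit fails to be prismatic in $\widehat{P}$. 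Conditions (6) and (7) are handled directly by ruling out the exceptional triangular-prism combinatorial type and the special ideal-vertex configuration.
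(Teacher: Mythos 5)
Your overall strategy — check the seven conditions of Theorem~\ref{and} directly — is more laborious than the paper's, which first reduces Andreev's theorem for all-$\pi/2$ labelings to four simpler conditions (at least~$6$ faces, degrees $3$ or $4$, no prismatic $4$--circuits, and a combined ``triple of faces'' condition replacing conditions~(4) and~(7)). That difference is mostly stylistic, and your observations~(i) and~(ii), and your handling of the cases where a circuit in $\widehat P$ already comes from a prismatic circuit in $P$, are sound. The trouble is in the step you yourself flag as the ``main obstacle,'' and there is a genuine error there.

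You consider a $4$--circuit $F_1F_2F_3F_4$ in $\widehat P$ whose lifts to $P$ have $\tilde e_{12},\tilde e_{23}$ sharing $v\in\MN_3$, and you deduce (correctly) that the third edge $e_*$ at $v$ lies in $\ME_{33}$, joining $v$ to $u_*\in\MN_3$, and collapses to a degree-$4$ vertex of $\widehat P$ shared by $T_v$ and $T_{u_*}$. But in the sub-case $F_4=$ the third face at $u_*$, your claim that ``$e_{14}$ and $e_{34}$ share a collapsed midpoint vertex on $T_{u_*}$'' is false: $T_{u_*}$ has three vertices, the midpoints of $e_*$, $e_{14}$, and $e_{34}$; the only \emph{collapsed} one is the midpoint of $e_*$, and $e_{14}$, $e_{34}$ meet $T_{u_*}$ at the other two, which are distinct. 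So the edges $e_{12},e_{23},e_{14},e_{34}$ of the circuit have pairwise disjoint endpoints in $\widehat P$ as far as your argument goes, and you have not ruled out the circuit being prismatic. The ``not~$F_4$'' sub-case is likewise asserted rather than proved. What you are missing is that the collapse of $e_*$ does something else, which is what the paper exploits: after collapsing, $F_1\cap F_3$ degenerates to the single ideal vertex $p$ (the midpoint of $e_*$), with $p$ distinct from the endpoints of $e_{12}$ and $e_{23}$ in $\widehat P$ — exactly the configuration of Andreev's condition~(7), and simultaneously the ``triple of faces'' condition in the paper's reduced form. Your proposal dismisses condition~(7) in one clause without noticing that collapsed $\ME_{33}$ edges create precisely such ideal-vertex intersections; that is where the real work lies, and it is where the paper's two-case analysis (both $F_i,F_k$ truncation triangles, versus not both) does its job by pulling the configuration back to a forbidden one in $P$. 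As it stands your argument does not close this case and has a concrete false step, so the proof is incomplete.
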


\begin{proof}
The proof of this lemma amounts to showing that $(\wh P, \pi/2)$ satisfies
the conditions of Andreev's theorem restricted to right--angled polyhedra. 
For an abstract labeled polyhedron $(Q,\pi/2)$, Andreev's
theorem reduces to the following four conditions:  $P$ has at least $6$
faces, Each vertex has degree $3$ or degree $4$, $Q$ has no prismatic
$4$--circuits, and for any triple of faces $(F_i,F_j,F_k)$ such that $F_i\cap
F_j$ and $F_j\cap F_k$ are edges with distinct endpoints, $F_i \cap F_k =
\emptyset.$  

The number of faces of $\wh P$ is at least the
number of faces of $P$.  Hence, it is immediate that $\wh P$ has at least $6$
faces unless $P$ has the combinatorial type of a simplex or a triangular
prism in which case $\wh P$ has $8$ or $11$ faces, respectively.   The
fact that all vertices of $\wh P$ are degree $3$ or $4$ is immediate.

Suppose that $\wh P$ contains a triple of faces, $(F_i, F_j, F_k)$ such that
$e_{ij} = F_i \cap F_j$ and $e_{jk} = F_j \cap F_k$ are edges that have
distinct endpoints.  We show that $F_i \cap F_k = \emptyset$ as required by
Andreev's theorem.  

Assume for contradiction that $F_i \cap F_k \neq \emptyset$.  If $F_i \cap
F_k$ is an edge, $e_{ik}$,  then $e_{ij}$, $e_{jk}$ and $e_{ik}$ would form a
prismatic $3$--circuit.  The fact that a prismatic $3$--circuit may not pass
through a triangular face implies that these three edges correspond to edges
in $P$ that are not in $\ME_{33}$.  Hence in $P$, the corresponding edges
form a prismatic $3$--circuit with all three edges labeled by $\pi/2$.  This
is a contradiction to the assumption that $(P,\Theta)$ satisfies Andreev's
theorem.  If $F_i \cap F_k$ is an ideal vertex $v$, then there are two cases
to rule out.  The first case is that $F_i$ and $F_k$ are triangles that
arise as degenerations of vertices $v_1$ and $v_2$ of $P$.  Both $v_1$ and
$v_2$ would be vertices of the face corresponding to $F_j$ in $P$.  This
leads to a contradiction, however, for if $v_1$ and $v_2$ are adjacent in
$F_j$, they would be vertices of a bigon in $P$, and if $v_1$ and $v_2$ are
non--adjacent vertices of $F_j$, there would exist an edge of $P$ connecting
two non--adjacent vertices of $F_j$.  The second case is that $F_i$ and $F_k$
meet in an ideal vertex and do not arise as degenerations of vertices of $P$.
This leads to a contradiction because either the triple of faces in $P$
corresponding to $F_i,$ $F_j,$ and $F_k$ violate condition (7) of Andreev's
theorem or they form a spherical prismatic $3$--circuit.

Finally, any prismatic $4$--circuit in $\wh P$ cannot pass through any
triangular faces.  Hence, any edge traversed by a prismatic $4$-circuit in
$\wh P$ corresponds to an edge in $P$ that is not in $\ME_{33}.$  Andreev's
theorem precludes the existence of any such prismatic $4$-circuits in $P$,
which completes the proof.
\end{proof}

\begin{proof}[Proof of Theorem~\ref{genup}]
For $t \in [0,1)$, let  
$\Theta_t : \text{Edges}(P) \to (0,\pi/2]$ be a
labeling of $P$ defined by 
\begin{equation}
\Theta_t(e) = 
	\begin{cases}
	(1-t) \Theta(e) & \text{if }e \in E_{33}\\
	\Theta(e) = \pi/2 & \text{otherwise}.
	\end{cases}
\end{equation}
Let $\PP_t$ be
the hyperbolic realization of $(P^{\vee}, \Theta^{\vee}_t)$.  Recall from
Section~\ref{deformation} that $(P^{\vee}, \Theta^{\vee})$ is the labeled
abstract polyhedron where all vertices of $P$ around which the angle sum of
$\Theta$ is less than $\pi$ are truncated and $\Theta^{\vee}$ agrees with
$\Theta$ except along the edges of truncated faces where it assigns $\pi/2$.
For $t \in [0,1),$ it is clear that $(P,\Theta_t)$ satisfies the generalized
version of Andreev's theorem (Theorem~\ref{andbb}), so $\PP_t$ satisfies
Andreev's theorem for finite volume hyperbolic polyhedra.

By Schl\"{a}fli's formula and Milnor's continuity conjecture, the function
$\vol(\PP_t)$ is continuous and increasing in $t$.    There exists $t_0$ so
that for all $t > t_0$, each vertex in $\MN_3(P)$ is truncated in $\PP_t$.
For $t>t_0$, let $\QQ_t$ be the hyperbolic cone manifold obtained by doubling
$\PP_t$ along its faces.  By the proof of Thurston's generalized
hyperbolic Dehn filling theorem, $\QQ_t$ converges geometrically to $\wh \PP$
doubled along its faces as $t \to 1$ (See, for example, Appendix B of
\cite{bp}).  Therefore,
$\vol(\PP_t) \to \vol( \wh{\PP}).$
By Lemma~\ref{hypwhp}, $\wh \PP$ is hyperbolic, so applying Theorem~\ref{pi2generalupper} to $\wh \PP$ completes the proof.
\end{proof}

The following corollaries give the upper bounds in Theorems~\ref{superweak}
and \ref{prismsum}.

\begin{corollary}\label{weakupper}
Let $\PP$ be a non--obtuse hyperbolic polyhedron containing
$N_4$ degree $4$ vertices and $N_3$ degree $3$
vertices. Then
	$$\vol(\PP) < \frac{2N_4 + 3 N_3 -2}{4} \cdot V_8 + \frac{15N_3 + 20 N_4}{16}
	\cdot V_3.$$
\end{corollary}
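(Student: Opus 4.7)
The plan is to apply Theorem~\ref{genup} and then bound the two quantities $n_4 + E_{33}$ and $E_{34} + n_2$ from above purely in terms of $N_3$ and $N_4$ by a double-counting argument on edge-endpoints. Since $n_4 = N_4$ by definition and $n_2 + n_3 = N_3$, the task reduces to estimating the two edge-counts.

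First I would handle $E_{33}$. Each vertex of $\MN_3$ has three incident edges, so the total number of edge-endpoints lying in $\MN_3$ equals $3 n_3$. Every edge of $\ME_{33}$ uses two such endpoints, giving
\[
E_{33} \;\leq\; \tfrac{3}{2}\, n_3 \;\leq\; \tfrac{3}{2}\, N_3.
\]
Hence $n_4 + E_{33} \leq N_4 + \tfrac{3}{2} N_3$, so
\[
\frac{n_4 + E_{33} - 1}{2} \;\leq\; \frac{2 N_4 + 3 N_3 - 2}{4},
\]
which is exactly the coefficient of $V_8$ appearing in the conclusion.

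Next I would bound $E_{34} + n_2$ by using two different counts and averaging. Counting edge-endpoints in $\MN_3$ gives $E_{34} \leq 3 n_3$, so $E_{34} + n_2 \leq 3 n_3 + n_2 \leq 3 N_3$. For the second count, observe that by the definition of $\MN_2$, every vertex of $\MN_2$ contributes exactly three edges, each of which has its other endpoint in $\MN_4$; these $3 n_2$ edges are disjoint from $\ME_{34}$. Counting at the $\MN_4$ end gives $3 n_2 + E_{34} \leq 4 N_4$, hence $E_{34} + n_2 \leq 4 N_4 - 2 n_2 \leq 4 N_4$. Adding the two bounds produces
\[
2(E_{34} + n_2) \;\leq\; 3 N_3 + 4 N_4,
\]
so $E_{34} + n_2 \leq (3 N_3 + 4 N_4)/2$, and therefore
\[
\frac{5(E_{34} + n_2)}{8} \;\leq\; \frac{15 N_3 + 20 N_4}{16}.
\]

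Substituting the two estimates into Theorem~\ref{genup} yields the claimed inequality. There is no real obstacle here: the entire argument is a routine edge–vertex incidence count, and the main point is that the combined term $E_{34} + n_2$ admits the averaging estimate exactly because the $3 n_2$ edges forced by $\MN_2$-vertices are disjoint from $\ME_{34}$. I would verify once that the inequalities are strict in the context needed (Theorem~\ref{genup} is already strict), so the corollary follows.
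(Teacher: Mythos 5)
Your proof is correct and takes essentially the same approach as the paper's: Theorem~\ref{genup} followed by simple edge--vertex incidence counts. The paper records the bounds as $2E_{33}\leq 3n_3$ and $2E_{34}\leq 3n_3+4n_4$ (i.e., splitting the $2E_{34}$ edge-endpoints between $\MN_3$ and $\MN_4$) and then drops $2n_2\leq 3n_2$, which arrives at the same numeric coefficients as your averaging of $E_{34}+n_2\leq 3N_3$ and $3n_2+E_{34}\leq 4N_4$; your extra observation that the $3n_2$ edges forced by $\MN_2$ are disjoint from $\ME_{34}$ is valid but yields no sharper final constant.
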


\begin{proof}
Note that $2E_{33} \leq 3 n_3$, $2 E_{34} \leq 3 n_3 + 4 n_4$, $n_2
+ n_3 = N_3$, and $n_4 = N_4$.  The corollary then
follows from Theorem~\ref{genup} via a simple calculation:

\begin{align*}
\vol(\PP) & \leq \frac{n_4 + E_{33} -1}{2}\cdot V_8 + \frac{5(E_{34} +
n_2)}{8} \cdot V_3 \\ 
	& \leq \frac{2n_4 + 3 n_3 -2}{4} \cdot V_8 + \frac{5(3n_3 + 4 n_4 +
	2 n_2)}{16} \cdot V_3 \\
	& \leq \frac{2N_4 + 3 N_3 -2}{4} \cdot V_8 + \frac{5(3N_3 + 4 N_4)}{16}
	\cdot V_3.
\end{align*}
\end{proof}

\begin{corollary}
If $D_n$ is an $n$--prism, $n\geq 4$, then
$$\vol(D_n) < \frac{3n-4}{2}\cdot V_8.$$
\end{corollary}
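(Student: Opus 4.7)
The plan is to apply the full truncation construction from Lemma~\ref{hypwhp} directly to the $n$-prism, and then use the sharper (all-ideal) bound from Theorem~\ref{pi2generalupper} rather than the general bound used in Corollary~\ref{weakupper}. The gain of the constant $3$ in the numerator (from $3n-1$ to $3n-4$) comes precisely from using the ideal case of the upper bound, so the key observation will be that the full truncation of a prism has \emph{all} vertices ideal.

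First I would verify the combinatorics. An $n$-prism has $2n$ vertices, each of degree $3$, and $3n$ edges. In the notation of Section~\ref{upperbound} this gives $\MN_4 = \emptyset$, so $\MN_2 = \emptyset$ as well, $\MN_3$ is all of the vertices, and every edge lies in $\ME_{33}$. Hence the full truncation $\wh{P}$ is obtained by replacing each of the $2n$ degree-$3$ vertices by a triangle and collapsing each of the $3n$ edges to a single vertex (the shared midpoint of the two adjacent truncation triangles). A direct count shows $\wh{P}$ has $3n$ vertices, and every such vertex is incident to exactly two truncation triangles, giving it degree $4$.

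Next I would invoke Lemma~\ref{hypwhp} to conclude that $(\wh{P}, \pi/2)$ is realized by a right-angled hyperbolic polyhedron $\wh{\PP}$. Since every vertex of $\wh{P}$ has degree $4$, Andreev's theorem forces every vertex of $\wh{\PP}$ to be ideal, with $N_\infty = 3n$ and $N_F = 0$. The all-ideal part of Theorem~\ref{pi2generalupper} then yields
$$\vol(\wh{\PP}) < \frac{N_\infty - 4}{2} \cdot V_8 = \frac{3n-4}{2} \cdot V_8.$$

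Finally I would close the gap between $D_n$ and $\wh{\PP}$ using the deformation constructed in the proof of Theorem~\ref{genup}: the one-parameter family $\PP_t$ obtained by scaling the dihedral angles along $\ME_{33}$-edges by $(1-t)$ is angle-nonincreasing, converges geometrically to $\wh{\PP}$ (doubled) as $t \to 1$, and has volume continuous and increasing in $t$ by Schl\"{a}fli's formula together with Milnor--Rivin continuity. Therefore $\vol(D_n) = \vol(\PP_0) < \lim_{t \to 1} \vol(\PP_t) = \vol(\wh{\PP})$, yielding the claimed inequality. I do not anticipate a real obstacle here; the only point requiring care is confirming that every vertex of $\wh{P}$ is in fact degree $4$ (so the all-ideal case of Theorem~\ref{pi2generalupper} applies), which is exactly where the hypothesis that $D_n$ is a prism, and hence has every edge in $\ME_{33}$, is used.
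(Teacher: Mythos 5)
Your proof is correct and follows essentially the same route as the paper: observe that every edge of the prism lies in $\ME_{33}$, so the full truncation $\widehat{D}_n$ is a right-angled polyhedron with $3n$ ideal vertices, then apply the ideal case of Theorem~\ref{pi2generalupper}. The paper's one-line proof leaves the deformation comparison $\vol(D_n)<\vol(\widehat{D}_n)$ implicit (it is established inside the proof of Theorem~\ref{genup}), whereas you spell it out, but the argument is the same.
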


\begin{proof}
All edges of an $n$-prism $D_n$ are in $\ME_{33}$, so $\widehat{D}_n$ is a
right--angled ideal polyhedron with $E_{33} = 3n$ vertices.  Apply the ideal
case of Theorem~\ref{pi2generalupper}.
\end{proof}

\section{Summary and an example}\label{summary}

In this section, we describe how to  estimate the volume of any
non--obtuse hyperbolic polyhedron $\PP$.   
In all cases, Theorem~\ref{genup} may be used to compute an upper bound for
the volume.

In the case where all angles are $\pi/3$ or less, the following theorem
follows from the discussion in Section~\ref{S:turndecomp} and a lower bound
on the volume of a $\pi/3$--equiangular polyhedron due to Rivin in a personal
communication.  A description of his argument is given in \cite{volpoly}.

\begin{theorem}\label{theorempi3}
If $\PP$ is a hyperbolic polyhedron with all dihedral angles less
than or equal to $\pi/3$, $N \geq 8$
vertices, and $M$ prismatic $3$--circuits, then
$$\vol(\PP) > (N+2M) \cdot \frac{3V_3}{8}.$$
\end{theorem}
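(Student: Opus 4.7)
The plan is to combine the turnover decomposition of Section \ref{S:turndecomp}, the $\pi/3$-equiangular deformation of Corollary \ref{pi3deform}, and Rivin's lower bound for $\pi/3$-equiangular ideal polyhedra cited in the statement.

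First I would verify that, under the hypothesis $\Theta(e) \leq \pi/3$ on every edge, $\PP$ admits no \emph{trivial} prismatic $3$-circuits. If a prismatic $3$-circuit $\gamma$ were parallel to a triangular face $T$, then at each vertex $v$ of $T$ two of the three edges meeting $v$ lie on $T$ (and so have dihedral angle $\leq \pi/3$) while the third lies on $\gamma$; Andreev's condition (2) then forces the $\gamma$-edge at $v$ to have angle $\geq \pi/3$ and hence exactly $\pi/3$, making the angle sum along $\gamma$ equal to $\pi$ and violating Andreev condition (4). Consequently all $M$ prismatic $3$-circuits in $\PP$ are non-trivial.

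Next, apply Corollary \ref{turnredlemma} to produce $M$ disjoint, embedded, pairwise nonparallel hyperbolic turnovers corresponding to these $3$-circuits, and cut $\PP$ along them to obtain $M+1$ turnover-reduced components $\PP_1, \ldots, \PP_{M+1}$ with $\vol(\PP) = \sum_i \vol(\PP_i)$. By Lemma \ref{turn} the cut inserts into each $\PP_i$ a new triangular face whose edges have dihedral angle $\pi/2$, while all other edges of $\PP_i$ inherit their original angles, still bounded by $\pi/3$. Thus every edge of $\PP_i$ not on a triangular face has angle at most $\pi/3$, so Corollary \ref{pi3deform} applies and produces a volume-nonincreasing deformation (with face degenerations) to a $\pi/3$-equiangular ideal polyhedron $\PP_i'$.

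I would then count vertices. Since turnovers avoid original vertices of $\PP$, each of the $N$ original vertices lies in a unique $\PP_i$, and since $\PP$ has no trivial $3$-circuits, no original triangular face of $\PP$ is degenerated by the deformation, so every original vertex survives to $\PP_i'$. Each turnover contributes one new triangular face (with $\pi/2$-edges) to each of its two adjacent pieces, and each such face is parallel to a prismatic $3$-circuit of the corresponding $\PP_i$, so it is collapsed by Corollary \ref{pi3deform} to a single ideal vertex. Hence each turnover adds exactly two new ideal vertices to the total, giving
\[
\sum_{i=1}^{M+1} V(\PP_i') = N + 2M,
\]
where $V(\cdot)$ denotes the number of vertices. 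Invoking Rivin's lower bound $\vol(\PP_i') > V(\PP_i') \cdot \tfrac{3V_3}{8}$ for each piece and summing yields
\[
\vol(\PP) = \sum_i \vol(\PP_i) \geq \sum_i \vol(\PP_i') > (N + 2M) \cdot \tfrac{3V_3}{8}.
\]

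The main obstacle is the invocation of Rivin's per-vertex lower bound for $\pi/3$-equiangular ideal hyperbolic polyhedra, which is external to this paper and requires its own Schl\"afli-based deformation argument; the hypothesis $N \geq 8$ is presumably what ensures that his bound applies to the pieces in aggregate, since naive application of the per-vertex constant $3V_3/8$ would fail for very small polyhedra such as the regular ideal tetrahedron.
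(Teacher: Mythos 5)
The paper gives no detailed proof of Theorem~\ref{theorempi3}; it says only that the result "follows from the discussion in Section~\ref{S:turndecomp} and a lower bound on the volume of a $\pi/3$--equiangular polyhedron due to Rivin." Your reconstruction follows exactly that outline — cut along turnovers via Corollary~\ref{turnredlemma}, deform by Corollary~\ref{pi3deform}, invoke Rivin's bound — so the strategy does match the paper's intent. However, there is a structural point you overlooked that renders most of the bookkeeping vacuous. If $\PP$ has finite volume and every dihedral angle is at most $\pi/3$, then Andreev condition (3) rules out degree--$4$ vertices (four angles $\leq \pi/3$ cannot sum to $2\pi$), and at each degree--$3$ vertex Andreev condition (2) forces the angle sum to be $\geq \pi$ while the hypothesis caps it at $\pi$, so all three angles are exactly $\pi/3$ and every vertex is ideal. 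Thus $\PP$ is already $\pi/3$--equiangular ideal, and then Andreev condition (4) forbids \emph{any} prismatic $3$--circuit whatsoever: $M = 0$. In the only regime where the inequality has content, your turnover decomposition is empty, your preliminary lemma is vacuously true, the $N + 2M$ count is just $N$, and the theorem is literally Rivin's bound.

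If one tries to give the $+2M$ term content by allowing hyperideal polyhedra (with $\vol$ read as truncated volume), then your preliminary lemma breaks: it is proved via Andreev condition (2), which is a finite--volume link condition and does not hold at a hyperideal vertex. In that setting a trivial prismatic $3$--circuit (one parallel to a triangular face whose exterior edges are all labeled strictly below $\pi/3$) is perfectly possible, and such a circuit would \emph{decrease} the vertex count under the extension of Corollary~\ref{pi3deform} (three vertices of the triangle collapse to one, a net loss of two) rather than contribute the $+2$ your tally requires. So the $N + 2M$ count would be an overcount and the argument would not close. Finally, you apply Rivin's per--vertex constant $3V_3/8$ to each piece $\PP_i'$ individually, but as you yourself observe, this constant fails for small ideal polyhedra such as the regular ideal tetrahedron; the hypothesis $N \geq 8$ on the whole polyhedron does not by itself ensure each piece has enough vertices. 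You flag this but do not resolve it, and it is a genuine gap in the per--piece application.
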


If $\PP$ is an $n$--prism having no dihedral angles in the interval $(\pi/3,
\pi/2)$, then Theorem~\ref{prismsum} says that 
$$\vol(\PP) >  (n-3)\cdot \vol \left(C_1 \left(\frac \pi 3\right)\right),$$
where $\vol(C_1(\pi/3)) \approx .324423$.  If $\PP$ is an $n$--prism that
does have some dihedral angles in $(\pi/3, \pi/2)$, then the techniques of
Section~\ref{C:prisms} do not hold in their full generality, but may be
applied to any sub--cube of $\PP$ that has no dihedral angles in $(\pi/3,
\pi/2)$.

Otherwise, we first decompose along the collection of triangles and quadrilaterals
provided by Theorems~\ref{kmppolyorb} and \ref{bspolyorb} applied to
$\QQ_{\PP}^{\perp}.$
By Corollary~\ref{pi2deform}, each of the resulting atoroidal components may be deformed to right--angled
hyperbolic polyhedra with an additional ideal vertex for each quadrilateral
face that arose from the Bonahon--Siebenmann decomposition and an additional
finite vertex for each triangular face coming from the turnover
decomposition.  Theorem~\ref{pi2general} gives a lower bound for each of
these components.  For each of the prism--type components coming from the
Bonahon--Siebenmann decomposition, Theorem~\ref{mainthm} may be used to
obtain a lower bound.  In the case that a prism--type component contains
dihedral angles in the interval $(\pi/3, \pi/2)$, Theorem~\ref{mainthm} gives
a lower bound for any cube in the decomposition that contains no dihedral
angle in $(\pi/3, \pi/2)$.

\subsection{An example}\label{S:example}
We conclude by computing the estimates for an example.  The
initial polyhedron, $\PP$, is displayed on the left in
Figure~\ref{bigexample}.  The first step in computing the lower bound is to
find a maximal collection of disjoint prismatic $3$--circuits.  For this
example, there are just two.  They are the dashed curves in the diagram on
the right in Figure~\ref{bigexample}.

\begin{figure} 
\labellist
\small\hair 2pt

\pinlabel $6$ [b] at 85 106

\pinlabel $4$ [r] at 34 30
\pinlabel $2$ [b] at 84 24
\pinlabel $5$ [tr] at 144 16
\pinlabel $4$ [tl] at 29 16
\pinlabel $37$ [br] at 106 259
\pinlabel $2$ [l] at 96 198
\pinlabel $2$ [br] at 70 228
\pinlabel $7$ [l] at 186 272
\pinlabel $2$ [tl] at 130 264
\pinlabel $2$ [bl] at 144 278
\pinlabel $2$ [r] at 140 228
\pinlabel $2$ [r] at 140 214
\pinlabel $2$ [l] at 194 260
\pinlabel $2$ [b] at 83 2

\endlabellist
\begin{center}
\scalebox{.65}{\includegraphics{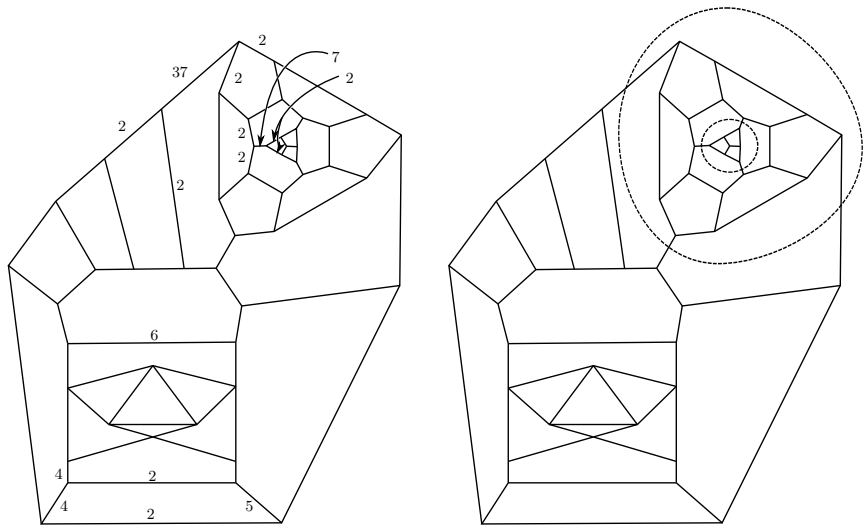}} 
\end{center}
\caption{Unlabeled edges that meet a degree four vertex are labeled $2$.
Unlabeled edges that meet a degree three vertex but not a degree four vertex
are labeled $3$} 
\label{bigexample} 
\end{figure}

The polyhedron $\PP$ is then decomposed along the corresponding turnovers, as
shown in the left--hand diagram in Figure~\ref{be3cutout}.  After capping off the
turnovers with orbifold balls, the small diagram is seen to be an order $4$
Coxeter prism, so has volume at least $\vol(C_1(\pi/3)) \approx .324423$.  
The other diagram that has been split off can be deformed to a compact
right--angled Coxeter polyhedron with $22$ vertices.  Therefore these two
components contribute at least
$\vol(C_1(\pi/3)) + \frac{7}{16}\cdot V_8$
to the volume of $\PP$.

\begin{figure} 
\begin{center}
\scalebox{.5}{\includegraphics{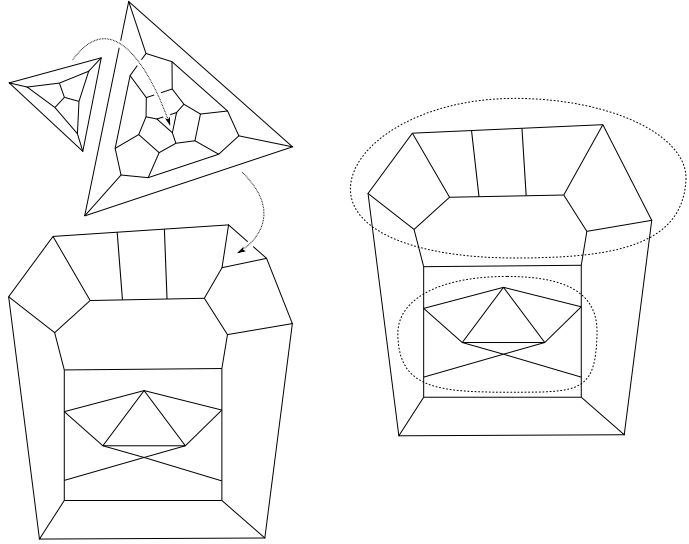}} 
\end{center}
\caption{The Coxeter cube is obtained by capping of the bounding triangle on the
upper--leftmost diagram. The diagram on the right shows the prismatic
$4$--circuits corresponding to the Bonahon--Siebenmann decomposition} 
\label{be3cutout} 
\end{figure}

The next step is to decompose along a subset of suborbifolds coming from the
Bonahon--Siebenmann decomposition into atoroidal and non--atoroidal components.
The result of part of this decomposition is seen in the left diagram in
Figure~\ref{be4cutout}. The
atoroidal component can be deformed to a right--angled polyhedron with $7$ ideal
vertices and $2$ finite vertices.  Therefore it contributes at least
$\frac{25}{16} \cdot V_8$
to the volume of $\PP$.
\begin{figure} 
\begin{center}
\scalebox{.5}{\includegraphics{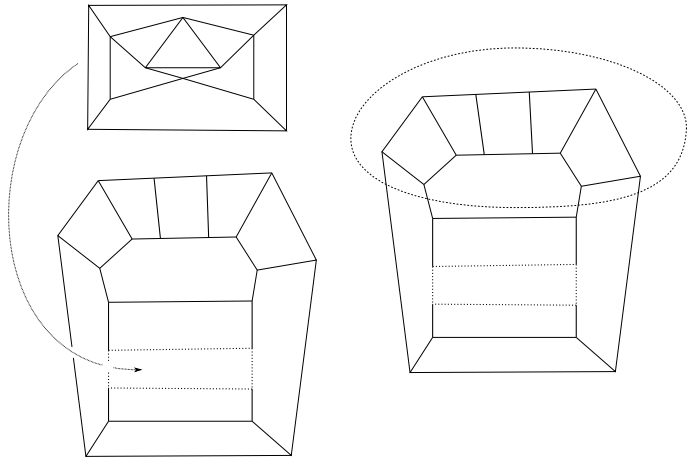}} 
\end{center}
\caption{These figures show the decomposition coming from the
Bonahon--Siebenmann splitting theorem} 
\label{be4cutout} 
\end{figure}

Finally, the remaining component, which is a reflection graph orbifold,
decomposes into two prism regions.  One of these prism regions contains only $6$
vertices of $\PP$.  Although we know that its volume is at least that of
$C_1(\mu)$ for some $\mu \in (0,\pi/2)$, we have not shown that $\mu$ is bounded
away from $\pi/2$, so the volume of $C_1(\mu)$ can be arbitrarily small.  The
other component has $12$ vertices so contributes at least
$$3 \cdot \vol(C_1(\pi/3))$$ to the volume of $\PP$.

\begin{figure} 
\begin{center}
\scalebox{.5}{\includegraphics{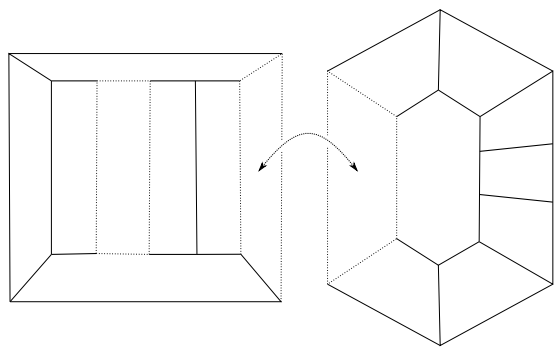}} 
\end{center}
\caption{This figure shows the decomposition of the graph orbifold region into
two prisms} 
\label{2prisms} 
\end{figure}

Adding these lower bounds together gives that the volume of $\PP$ is at least
$8.625$.

For the upper bound, we use Theorem~\ref{genup}.  For this example, $n_2=0$,
$n_4=6$, $E_{33} = 63$, and $E_{34}=6$.  This gives an upper bound of $128.377$.

\subsection{Acknowledgements} I wish to thank my thesis advisor, Ian Agol,
for many helpful conversations. I also wish to thank Dave Futer, Feng Luo,
Shawn Rafalski, and Louis Theran.

\bibliographystyle{plain}
\bibliography{./atkinson}
\end{document}